\title{Large Deviations for a Non-Centered Wishart Matrix} 
\author{
\ Adrien Hardy 
\footnote{Institut de Math\'ematiques de Toulouse, Universit\'e de Toulouse, 31062 Toulouse, France.} \hspace{2 dd}\footnote{Department of Mathematics, Katholieke Universiteit Leuven, Celestijnenlaan 200 B,
3001 Leuven, Belgium. Email addresses: adrien.hardy@wis.kuleuven.be, arno.kuijlaars@wis.kuleuven.be} 
\,
and  
\ Arno B.J.  Kuijlaars 
\footnotemark[\value{footnote}] }
\numberwithin{equation}{section}
\newtheorem{theorem}{Theorem}[section]
\newtheorem{lemma}[theorem]{Lemma}
\newtheorem{corollary}[theorem]{Corollary}
\newtheorem{proposition}[theorem]{Proposition}
\newtheorem{Remark}[theorem]{Remark}
\newenvironment{remark}{\begin{Remark}\rm}{\end{Remark}}
\newtheorem{Example}[theorem]{Example}
\newtheorem{assumption}[theorem]{Assumption}
\newcommand{\tr}{\operatorname{Tr}}
\newcommand{\eq}{\begin{equation}}
\newcommand{\qe}{\end{equation}}
\newcommand{\Ma}{\mathcal{M}_1(\mathbb{R}_+)}
\newcommand{\Mc}{\mathcal{M}_{1/2}^{\sigma}\left(\mathbb{R}_-\right)}
\newcommand{\Mb}{\mathcal{M}_{1/2}(\mathbb{R}_-)}
\newcommand{\Me}{\mathcal{E}(\R_-)}
\newcommand{\Mf}{\mathcal{M}_{1/2}^{\sigma_N}\left(\mathbb{R}_-\right)}
\newcommand{\R}{\mathbb{R}}
\newcommand{\C}{\mathbb{C}}
\newcommand{\J}{\mathcal{J}}
\newcommand{\E}{\mathcal{E}}
\newcommand{\s}{\mathbb{S}}
\newcommand{\M}{\mathcal{M}}
\newcommand{\F}{\mathcal{F}}
\newcommand{\V}{\mathcal{V}}
\newcommand{\A}{\mathbb{A}}
\newcommand{\B}{\mathcal{B}}
\newcommand{\U}{\mathcal{U}}
\newcommand{\N}{\mathbb{N}}
\newcommand{\p}{\mathbb{P}}
\newcommand{\Supp}{{\rm Supp}}
\newcommand{\bs}{\boldsymbol}
\newcommand{\x}{{\bs{ x}}}
\begin{document}
\maketitle 

\begin{abstract}
We investigate an additive perturbation of a complex Wishart random matrix  and prove that a large deviation principle holds for the spectral measures. The rate function is associated to a vector equilibrium problem coming from logarithmic potential theory, which in our case is a quadratic map involving the logarithmic energies, or Voiculescu's entropies, of two measures in the presence of an external field and an upper constraint. The proof is based on a two type particles Coulomb gas representation for the eigenvalue distribution, which gives a new insight on why such variational problems should describe the limiting spectral distribution. This representation is available because of a Nikishin structure satisfied by the weights of the multiple orthogonal polynomials hidden in the background.   

\end{abstract}

\section{Introduction and statement of the results}
\subsection{Introduction}
The study of the large deviations for the spectral measures of large random matrices has started with the work \cite{BAG} of Ben Arous and Guionnet, and continued with many extensions, see e.g. \cite{BAZ, HP, ES, Bl, H}, which  now cover all the so-called unitary invariant matrix models, and actually  the larger class of $\beta$-ensembles.  The proof of such large deviation principles (LDPs) is based on the fact that an explicit and tractable expression is available for the joint eigenvalue distributions, which is  a consequence of the unitary invariance.  A common feature shared by these random matrix ensembles is that the rate functions governing such LDPs, which are maps on the space of probability measures, are given by the \emph{logarithmic energy} functional 
\eq
\label{logenergy}
\iint \log\frac{1}{|x-y|}d\mu(x)d\mu(y),
\qe
plus a linear term in the probability measure. The latter functional \eqref{logenergy} is the main object of study in 
logarithmic potential theory, and has moreover been interpreted up to a sign by Voiculescu as the \emph{free entropy}, a free probability equivalent of the Shannon's entropy in classical probability \cite{Vo}, see also  \cite{BS, HP, HMP}.

   More recently, much attention has been given to perturbed matrix models where one has broken the unitary invariance by the addition, or multiplication, of an external deterministic matrix, and also multi-matrix models.  It is a highly non-trivial problem to establish in full generality that a LDP still holds for such  matrix models, because of the complex dependence between the eigenvalues and eigenvectors. By developing an appropriate non-commutative It\^o calculus,  Cabanal-Duvillard and Guionnet  obtained a LDP upper bound for the spectral measures of a large class of matrix valued stochastic processes  \cite{CDG}. It has been later extended to a full LDP by Guionnet and  Zeitouni \cite{GZ1, GZ2}, and a LDP for perturbed or multi-matrix models actually follows by contraction principle. The price to pay for such a level of generality is a quite complicated rate function, but it is worth mentioning that it is known to reduce to the logarithmic energy in the unitary invariant case (i.e null perturbation), see \cite[Section 5.1]{CDG2}. 

In this work, we shall follow a different path and explore the large deviations of a  perturbed matrix model through its connection to multiple orthogonal polynomials (MOPs). Indeed, while the unitary invariant matrix models are known to be related to orthogonal polynomials \cite{Ko}, it has been observed by Bleher and Kuijlaars that perturbed matrix models benefit from a connection with MOPs \cite{BK}, in the sense that the average characteristic polynomial of the random matrix is a MOP with respect to appropriate weights and multi-index. Such relation  also holds for multi-matrix models \cite{DK}, see also \cite{Ku2} for a survey. On the other hand, the limiting zero distribution of certain classes of  MOPs can be described in terms of the solution of a vector equilibrium problem \cite{Apt,NS} : given  $d\geq1$ and a $d\times d$ real symmetric positive definite matrix $C=[c_{ij}] $, minimize the functional given by
\[
\sum_{1\leq i,j \leq d }c_{ij}\iint\log\frac{1}{|x-y|}d\mu_i(x)d\mu_j(y)
\]
plus linear terms in $(\mu_1,\ldots,\mu_d)$, when the vector of measures $(\mu_1,\ldots,\mu_d)$ runs over $\M_{m_1}(\Delta_1)\times\cdots\times\M_{m_d}(\Delta_d)$, or in some subset thereof. Here $\M_{m}(\Delta)$ stands for the set of Borel measures on $\Delta\subset\C$ with total mass $m$. For a general treatment  concerning vector equilibrium problems, see \cite{BKMW, HK}.

A natural question is then to seek if  the functionals associated to vector equilibrium problems should be involved as  large deviations rate functions. It is the aim of this work to answer affirmatively for a particular example that we present now.

\subsection{Non-centered Wishart random matrix}

The model we investigate here is a non-centered Wishart random matrix, which is an additive perturbation of the usual  Wishart model. Namely, let $X=[X_{ij}]$ be a $M\times N$ complex matrix filled with i.i.d  (non-centered) complex Gaussian random entries $X_{ij}\sim\mathcal{N}_{\C}(A_{ij},1/\sqrt{N})$, where $A=[A_{ij}]$ is a given deterministic  $M\times N$ complex matrix. One can equivalently endow the space $\M_{M,N}(\C)$ of $M\times N$ complex matrices with the probability distribution 
\eq
\label{Density}
d\p_N(X)=\frac{1}{Z_{M,N}}e^{-N\,{\rm Tr}\big((X- A)^*(X- A)\big)}dX,
\qe
where $Z_{M,N}$ is a normalization constant and $dX$ stands for the Lebesgue measure on $\M_{M,N}(\C)\simeq\R^{2MN}$. Without loss of generality, $A$ can be chosen in its singular value decomposition form. Note that, if $\U_N(\C)$ stands for the unitary group of $\C^N$, $\p_N$ is not invariant under the transformations $X\mapsto UXV^*$ for given $U\in\U_M(\C)$, $V\in\U_N(\C)$, except if  $A=0$.  

We are interested in the convergence and  deviations of the spectral measure
\eq
\label{spectralmeasure}
\mu^N=\frac{1}{N}\sum_{i=1}^N\delta(x_i),
\qe
where the $x_i$'s are the eigenvalues of the non-centered Wishart matrix $X^*X$ (or equivalently the squared singular values of $X$) with $X$ drawn according to  $\p_N$. It is a random variable taking its values in  $\M_1(\R_+)$,  that we equip with its weak topology.

This matrix model has been extensively studied in the statistic and signal processing literature (see e.g. \cite{SC} and references therein), and  Dozier and Silverstein described the limiting eigenvalue distribution for a large class of perturbations $A$ by means of a fixed point equation for its Cauchy-Stieltjes transform \cite{BDS1, BDS2}. Alternatively, the limiting eigenvalue distribution can be characterized in terms of the rectangular free  convolution introduced by Benaych-Georges \cite{BG}. On the other hand, the non-centered Wishart matrix model does not belong to the class of random matrices for which Guionnet and Zeitouni were able to extend  the LDP upper bound of  Cabanal-Duvillard and Guionnet into a full LDP  for the spectral measures $(\mu^N)_N$, and to prove such a LDP is in fact still an open problem.

In this work, we shall restrict our investigation to a particular case and assume that $M=N+\alpha$ where $\alpha$ is a non-negative integer, and consider for $a>0$ the particular type of perturbation
\eq
\label{perturbation}
A=\begin{bmatrix}\begin{matrix} \sqrt{a} & &  \\ &  \ddots&  \\ & &  \sqrt{a}  \end{matrix}  \\ \mathbf 0_{\alpha} \end{bmatrix}\in\mathcal{M}_{M,N}(\C).
\qe
As announced in the introduction, our goal is to establish a LDP for $(\mu^N)_N$ for which the rate function  involves a functional associated to a vector equilibrium problem, which itself describes the asymptotic distribution of the zeros of MOPs.  Let us now explains our intuition.

As it is classical for many random matrix models, one can embed such a non-centered Wishart matrix in a matrix valued stochastic process. Its  squared singular values then induce (up to a time change of variable) a process  of $N$ non-intersecting squared Bessel paths conditioned to start at $a>0$  and end at the origin. Kuijlaars, Mart\'inez-Finkelshtein and Wielonsky studied the particle system of a fixed-time marginal and established  a determinantal point process structure related to MOPs \cite{KMFW}, a so-called MOP ensemble \cite{Ku1}. Moreover, the limiting zero distribution of the MOPs involved in this particle system has been characterized by a vector equilibrium problem in \cite{KR}.  Combining these results, it is  likely  (see also \cite[Appendix]{KMFW}) that the spectral measure $\mu^N$  converges almost surely as $N\rightarrow\infty$ to a limiting distribution $\mu^*$ which is the first component  of the unique minimizer $(\mu^*,\nu^*)$ of the functional 
\begin{align}
\label{ratefunctionfake}
& \iint \log\frac{1}{|x-y|}d\mu(x)d\mu(y) - \iint \log\frac{1}{|x-y|}d\mu(x)d\nu(y)\nonumber\\
 &  \qquad\qquad+\iint\log\frac{1}{|x-y|}d\nu(x)d\nu(y) + \int \Big(x-2\sqrt{ax}\,\Big)d\mu(x)
\end{align}
when the vector of measures $(\mu,\nu)$ runs over $\M_1(\R_+)\times\Mc$, where we introduced the set of constrained  measures
\eq
\label{constrspace}
\Mc = \left\{ \nu\in\M_{1/2}(\R_-) : \; d\nu(x)\ll dx,\; \frac{d\nu}{dx}(x)\leq \frac{\sqrt{a}}{\pi}|x|^{-1/2}\right\}.
\qe
Here we use the notation
\[
\R_-=(-\infty,0],\qquad \R_+=[0,+\infty).
\]
Note that the functional \eqref{ratefunctionfake} is actually not well-defined for all  $(\mu,\nu)\in\M_1(\R_+)\times\Mc$, since the logarithmic energy \eqref{logenergy} can take the values $+\infty$ and $-\infty$ as well. We actually describe later an appropriate way to extend \eqref{ratefunctionfake} to the whole set $\M_1(\R_+)\times\Mc$, which is possible because it lies in the class of weakly admissible vector equilibrium problems introduced by the authors in \cite{HK}.

\subsection{Statement of the result}

The aim of this work is to show that such a  functional \eqref{ratefunctionfake}, once properly extended, is involved as a rate function governing a LDP for the spectral measures $(\mu^N)_N$. More precisely, our main result is the following.
\begin{theorem} 
\label{LDP}
The sequence of measures $(\mu^N)_N$ satisfies a LDP on $\M_1(\R_+)$ in the scale $N^2$ with good rate function \[
\inf_{\nu\in\M_{1/2}^{\sigma}(\R_-)}\J(\,\cdot\,,\nu)-\min\J,
\]
 where $\J$ is a well-defined extension of \eqref{ratefunctionfake} introduced in Section \ref{ratefunctionsection}. Namely, 
\begin{itemize}
\item[{\rm (a)}] The level set 
\[
\Big\{ \mu\in\M_1(\R_+) : \; \inf_{\nu\in\M_{1/2}^{\sigma}(\R_-)}\J(\mu,\nu)\leq \gamma\Big\}
\]
is compact for any $\gamma\in\R$.
\item[{\rm (b)}] $\J$ admits a unique minimizer $(\mu^*,\nu^*)$ on $\M_1(\R_+)\times\M_{1/2}^\sigma(\R_-)$.
\item[{\rm (c)}] For any closed set $\F\subset \M_1(\R_+)$,
\[
\limsup_{N\rightarrow\infty}\frac{1}{N^2}\log \p_N\Big(\mu^N\in\F\Big)\leq - \inf_{(\mu,\nu)\in\F\times\Mc}\Big\{\J(\mu,\nu)-\J(\mu^*,\nu^*)\Big\}.
\]
\item[{\rm (d)}] For any open set $\mathcal O\subset \M_1(\R_+)$,
\[
\liminf_{N\rightarrow\infty}\frac{1}{N^2}\log \p_N\Big(\mu^N\in\mathcal O\Big)\geq - \inf_{(\mu,\nu)\in\mathcal O\times\Mc}\Big\{\J(\mu,\nu)-\J(\mu^*,\nu^*)\Big\}.
\]
\end{itemize}
\end{theorem}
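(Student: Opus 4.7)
The plan is to lift the problem to a joint system of two species of particles and then contract to obtain the LDP for $\mu^N$ alone, in the spirit of the Ben Arous-Guionnet strategy \cite{BAG}. Starting from the matrix density \eqref{Density}, the first step is to integrate out the unitary degrees of freedom to obtain the joint law of the eigenvalues of $X^*X$. Because $A$ has the very special form \eqref{perturbation} (so that in particular $A^*A=aI_N$), the relevant HCIZ-type integral reduces to an explicit Bessel-type kernel, and the two weights attached to the underlying MOP ensemble form a Nikishin system: the second weight is the Cauchy transform of an absolutely continuous measure $\sigma$ on $\R_-$ whose density coincides with the upper bound $\tfrac{\sqrt{a}}{\pi}|x|^{-1/2}$ appearing in \eqref{constrspace}. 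Unfolding this Cauchy representation recasts the one-species eigenvalue density as the $\y$-marginal of a density on $\R_+^N\times\R_-^m$, with $m/N\to 1/2$, of the schematic form
\[
\frac{1}{\widehat Z_N}\prod_{i<j}(x_i-x_j)^2\prod_{k<\ell}(u_k-u_\ell)^2\prod_{i,k}|x_i-u_k|\, e^{-N\sum_i V(x_i)}\, d\x\, d\y,
\]
with $V(x)=x-2\sqrt{ax}$ on $\R_+$, and the $u_k$'s a priori confined so that the auxiliary empirical measure $\nu^N=\tfrac{1}{N}\sum_k\delta_{u_k}$ lies in a compact subset of $\Mc$. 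This is the two-species attractive Coulomb gas promised in the abstract.

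With an explicit Coulomb gas in hand, one then adapts \cite{BAG} to establish a joint LDP in scale $N^2$ for $(\mu^N,\nu^N)$ on $\M_1(\R_+)\times\Mc$ with good rate function $\J-\min\J$, where $\J$ is the extension of \eqref{ratefunctionfake} introduced in Section \ref{ratefunctionsection}. The required ingredients are: exponential tightness, which comes from the confining potential $V$ for $\mu^N$ and from the weak compactness of $\Mc$; the sharp asymptotics $\tfrac{1}{N^2}\log\widehat Z_N\to-\min\J$, obtained by bracketing $\J$ with continuous regularizations; and matching upper and lower bounds on small weak-topology balls. Because the underlying vector equilibrium problem is weakly admissible in the sense of \cite{HK}, $\J$ is a well-defined good rate function, and the lower bound can be reduced to pairs of compactly supported absolutely continuous measures respecting the upper constraint on $\nu$. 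Items (a), (c), (d) of Theorem \ref{LDP} then follow by contraction along $(\mu,\nu)\mapsto\mu$: the contracted rate function is $\mu\mapsto\inf_{\nu\in\Mc}\J(\mu,\nu)-\min\J$, and it inherits goodness because the infimum is attained on the compact set $\Mc$. Item (b), uniqueness of the joint minimizer, follows from strict convexity of $\J$ on the convex set $\M_1(\R_+)\times\Mc$, which in turn comes from positive definiteness of the interaction matrix $\bigl(\begin{smallmatrix}1&-1/2\\-1/2&1\end{smallmatrix}\bigr)$ combined with strict convexity of the logarithmic energy on signed measures of fixed mass and finite logarithmic energy.

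The decisive and most delicate step is the Coulomb gas representation: one must identify the Nikishin representation of the Bessel weights and turn it into a genuine \emph{constrained} two-species Coulomb gas in which the upper constraint on $\nu$ emerges naturally from the finite-$N$ density rather than being imposed by hand. Once this representation is secured, the remaining LDP analysis is technical but follows well-established potential-theoretic and large-deviations patterns, modulo the additional care required by the weak admissibility of $\J$ and by the non-standard \emph{upper} constraint on the auxiliary measure (which, unlike a lower bound obstacle, forces a two-sided approximation argument in the lower bound).
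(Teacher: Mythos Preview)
Your overall architecture---two-species Coulomb gas, joint LDP, contraction---matches the paper's, but several of the load-bearing steps are misidentified.

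First, the Nikishin representation does not express the weight ratio as the Cauchy transform of an absolutely continuous measure on $\R_-$. The Mittag-Leffler expansion of $I_{\alpha+1}/I_\alpha$ is a sum over the zeros $j_{\alpha,k}$ of the Bessel function $J_\alpha$, so the auxiliary particles $u_k$ live on the \emph{discrete} $N$-dependent lattice $\A_N=\{-(j_{\alpha,k}/2\sqrt{a}N)^2:k\geq 0\}$, and the reference measure is the atomic $\sigma_N=\tfrac{1}{N}\sum_{u\in\A_N}\delta(u)$. The upper constraint on $\nu$ is not imposed a priori: it emerges because $\nu^N\leq\sigma_N$ automatically (at most one particle per site) and $\sigma_N\to\sigma$ vaguely. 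In particular $\nu^N$ is purely atomic, hence $\nu^N\notin\Mc$ for every finite $N$, so your clause ``the $u_k$'s a priori confined so that $\nu^N$ lies in a compact subset of $\Mc$'' cannot be made literal. The paper works instead on the closed set $\E(\R_-)=\bigcup_N\M_{1/2}^{\sigma_N}(\R_-)\cup\Mc$.

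Second, your exponential-tightness claim fails on both counts. The set $\Mc$ is not weakly compact in $\Mb$: the density bound $\tfrac{\sqrt{a}}{\pi}|x|^{-1/2}$ is not integrable at $-\infty$, so mass can escape. And there is no confining potential acting on the $u_k$'s. The paper says explicitly that it does not know how to prove exponential tightness directly; instead it pushes $(\mu^N,\nu^N)$ forward by the inverse stereographic projection $T$ onto the circle $\s$, proves a weak LDP upper bound for $(T_*\mu^N,T_*\nu^N)$ on the compact space $\M_1(\s_+)\times\E(\s_-)$, and then pulls back. The discreteness is used a second time in this step: the uniform integrability of $\xi\mapsto\log|\xi|$ over $\E(\s_-)$ (Lemma~\ref{unifint}), which controls the possible collision of the two species at the origin, is proved using the spacing of Bessel zeros via McMahon's formula.

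Finally, a sign: in the two-species density the cross factor $\Delta_{N,N/2}(\bs x,\bs u)=\prod_{i,k}|x_i-u_k|$ sits in the \emph{denominator}, consistently with the minus sign on the mixed energy in \eqref{ratefunctionfake}; your schematic formula has it in the numerator.
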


As a direct consequence of Theorem \ref{LDP} (b), (c) and the Borel-Cantelli Lemma, we obtain the almost sure convergence of $\mu^N$ towards $\mu^*$ in the weak topology of $\M_1(\R_+)$. Namely, if  $\p$ denotes  the measure induced by the product probability space $\bigotimes_N\big(\M_{M,N}(\C),\p_N\big)$, we have 

\begin{corollary}
\label{corollary}
\[
\p\Big(\mu^N \mbox{converges as } N\rightarrow\infty \mbox{ to $\mu^*$} \mbox{ in the weak topology of } \M_1(\R_+)\Big)= 1.
\]
\end{corollary}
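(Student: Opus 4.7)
The plan is a textbook LDP+Borel--Cantelli argument. Since $\R_+$ is Polish, the space $\M_1(\R_+)$ endowed with the weak topology is Polish, hence metrizable; fix a countable base $(V_k)_{k\geq 1}$ of open neighborhoods of $\mu^*$ with $\bigcap_k V_k=\{\mu^*\}$.

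The key structural observation is that the rate function $I(\mu):=\inf_{\nu\in\Mc}\J(\mu,\nu)-\min\J$ has $\mu^*$ as its unique zero. Indeed, $0\leq I(\mu^*)\leq \J(\mu^*,\nu^*)-\min\J=0$, and if $I(\mu)=0$ then $\inf_{\nu\in\Mc}\J(\mu,\nu)=\min\J$; since $\Mc$ is weakly compact and $\J$ lower semicontinuous (both properties belonging to the setup of the vector equilibrium problem developed in Section~\ref{ratefunctionsection}), this infimum is attained at some $\nu\in\Mc$, so $(\mu,\nu)$ minimizes $\J$ and (b) forces $\mu=\mu^*$. A standard compactness argument then gives
\[
c_k:=\inf_{\mu\in V_k^c}I(\mu)>0
\]
for every $k$: any sequence $(\mu_n)\subset V_k^c$ with $I(\mu_n)\to 0$ lies eventually in the compact sublevel set $\{I\leq 1\}$ provided by (a), admits a cluster point $\mu_\infty\in V_k^c$, and satisfies $I(\mu_\infty)=0$ by the lower semicontinuity of $I$ (which follows from the closedness of the sublevel sets in (a)), contradicting the uniqueness of the zero of $I$.

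Finally, Theorem~\ref{LDP}(c) applied to the closed set $V_k^c$ yields, for $N$ large enough,
\[
\p_N\bigl(\mu^N\in V_k^c\bigr)\leq \exp\bigl(-c_kN^2/2\bigr),
\]
so $\sum_N\p_N(\mu^N\in V_k^c)<\infty$. The Borel--Cantelli lemma, applied on the product probability space carrying $\p$, then yields $\mu^N\in V_k$ eventually, $\p$-almost surely; intersecting these full-probability events over the countable family $(V_k)_k$ gives the claimed almost-sure weak convergence $\mu^N\to\mu^*$. There is no genuine obstacle beyond what is already established in the theorem: the only subtle ingredient is the attainment of $\inf_\nu\J(\mu,\nu)$, which in turn relies on the weak compactness of $\Mc$ and the lower semicontinuity of $\J$ supplied by the vector equilibrium problem setup.
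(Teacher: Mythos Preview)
Your approach is exactly the one the paper has in mind (it merely says ``direct consequence of Theorem~\ref{LDP}~(b),~(c) and Borel--Cantelli''), and your write-up is a correct and complete unpacking of that sentence, with one exception.

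The claim that $\Mc$ is weakly compact is false: the constraint measure $\sigma$ has infinite total mass on $\R_-$, so one can build measures $\nu_n\in\Mc$ supported arbitrarily far to the left, and mass escapes to $-\infty$. Fortunately you do not need this. To see that the infimum $\inf_{\nu\in\Mc}\J(\mu,\nu)=\min\J$ is attained, take a minimizing sequence $(\nu_n)$; then $(\mu,\nu_n)$ eventually lies in a sublevel set $\{\J\leq\gamma\}$, which is compact in $\M_1(\R_+)\times\M_{1/2}(\R_-)$ by Proposition~\ref{GRF}~(a) (this is the content behind Theorem~\ref{LDP}~(a)). Extracting a convergent subsequence and using lower semicontinuity of $\J$ yields the attainment, and then Theorem~\ref{LDP}~(b) gives $\mu=\mu^*$ as you wrote. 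With this correction the rest of your argument goes through verbatim.
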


\subsection{Generalizations and variations}

We now describe a few other  particle systems for which one can use the same approach as presented in this work to obtain a similar LDP statement.

\subsubsection{More general potentials}
The following generalization of the density distribution  \eqref{Density}  has been introduced by Desrosiers and Forrester in \cite{DF}   
\eq
\label{Density'}
\frac{1}{Z_{M,N}}e^{-N\,{\rm Tr}\big(V(X^*X)-{\rm Re}(X^*A)\big)}dX,
\qe
where $V:\R_+\rightarrow\R$ is a continuous function which is extended to Hermitian matrices by  functional calculus.  Indeed, by choosing  $V(x)=x$ we recover  the non-centered Wishart matrix model. Now, if we take $A$ as in \eqref{perturbation} and we assume that $V$ satisfies the growth condition
\[
\liminf_{x\rightarrow+\infty} \frac{V(x)-2\sqrt{ax}}{2\log(x)}>1,
\]
then one can follow the methods developed in this work without substantial change to show an analogue of Theorem \ref{LDP} and Corollary \ref{corollary}, where $\J$ is replaced by a well-defined extension in the sense of \cite{HK} (see also Section \ref{ratefunctionsection}) over $\M_1(\R_+)\times\M^\sigma_{1/2}(\R_-)$ of  the functional 
\begin{align*}
& \iint \log\frac{1}{|x-y|}d\mu(x)d\mu(y) - \iint \log\frac{1}{|x-y|}d\mu(x)d\nu(y)\nonumber\\
 &  \qquad\qquad+\iint\log\frac{1}{|x-y|}d\nu(x)d\nu(y) + \int \Big(V(x)-2\sqrt{ax}\,\Big)d\mu(x).
\end{align*}

\subsubsection{Rescaling the parameter $\alpha$}
Observe that in our setting we have $M/N\rightarrow 1$ as $N\rightarrow\infty$. A natural question would be to investigate the case where one performs the rescaling $\alpha\mapsto\alpha N$, so that $M/N\rightarrow 1+\alpha$ as $N\rightarrow\infty$ with $\alpha\geq 0$. It turns out that the  approach we develop below is still well-suited for this case, but  requires  more involved asymptotic estimates for Bessel functions and its zeros than the ones we use  in this paper. These asymptotic estimates are actually provided by \cite[(9.7.7)]{AS} and \cite[(9.5.22)]{AS}, and they would lead to statements similar to Theorem \ref{LDP} and Corollary \ref{corollary}, where $\J$ is replaced by a well-defined extension  of
\begin{align*}
& \iint \log\frac{1}{|x-y|}d\mu(x)d\mu(y) - \iint \log\frac{1}{|x-y|}d\mu(x)d\nu(y) +\iint\log\frac{1}{|x-y|}d\nu(x)d\nu(y) \nonumber\\
 &  + \int \left(x-\alpha\log(x)-\sqrt{4ax+\alpha^2}+ \alpha\log\left(\alpha+\sqrt{4ax+\alpha^2}\right)\,\right)d\mu(x)
\end{align*}
where  the space $\Mc$ is now defined by
\[
\Mc = \left\{ \nu\in\M_{1/2}\Big((-\infty,-\tfrac{\alpha^2}{4a}]\Big) : \; d\nu(x)\ll dx,\; \frac{d\nu}{dx}(x)\leq \frac{\sqrt{4a|x|-\alpha^2}}{2\pi|x|}\right\}.
\]
This is also the functional obtained in \cite{KR} when describing the limiting zero distribution of the associated MOPs. Nevertheless, in this setting the proof becomes  more technical, and we chose to restrict ourselves to the non-rescaled model for the sake of clarity.

\subsubsection{Non-intersecting Bessel paths with one positive starting and ending point}
In \cite{DKRZ}, Delvaux, Kuijlaars, Rom\'an and Zhang investigated a system of $N$ non-intersecting squared Bessel paths conditioned to start from $a>0$ at time $t=0$ and to end at $b>0$ when $t=1$. It is actually not known if such model is related to a random matrix ensemble. We note that at fixed time $0<t<1$, it is easy to express the particle distribution as the marginal distribution of a Coulomb gas involving three different type particles by combining \cite[Section 2.5]{DKRZ} with the computations we present in Section 2.  As a consequence, if we introduce the functional $\J$ to be the well-defined extension of 
\begin{align}
& \iint \log\frac{1}{|x-y|}d\mu(x)d\mu(y) - \iint \log\frac{1}{|x-y|}d\mu(x)d\nu(y)\\
 & \qquad - \iint\log\frac{1}{|x-y|}d\mu(x)d\eta(y)+\iint\log\frac{1}{|x-y|}d\nu(x)d\nu(y) \nonumber\\
 & \qquad \qquad +\iint\log\frac{1}{|x-y|}d\eta(x)d\eta(y) + \int \left(\frac{x}{t(1-t)}-\frac{2\sqrt{ax}}{t}-\frac{2\sqrt{bx}}{1-t}\,\right)d\mu(x)\nonumber
\end{align}
in the sense of \cite{HK}  (see also Section \ref{ratefunctionsection}), where $\mu\in\M_1(\R_+)$, and $\nu,\eta\in\M_{1/2}(\R_-)$ satisfy 
\[
\frac{d\nu}{dx}(x)\leq\frac{\sqrt{a}}{\pi t}|x|^{-1/2},\qquad  \frac{d\eta}{dx}(x)\leq\frac{\sqrt{b}}{\pi (1-t)}|x|^{-1/2},
\]
then a  LDP similar to the one of the non-centered Wishart matrix holds where the rate function is given by $\J-\min\J$ after taking the infimum over all constrained measures $\nu$ and $\eta$. Indeed, there is no interaction between the particles associated to $\nu$ and $\eta$, and then both $\nu$ and $\eta$ interact with $\mu$ exactly in the same way that  $\nu$ interacts with  $\mu$ in the present work, so that a LDP can be established with no extra work from the ingredients of the proof we present below.  

\subsection{Open problems}
There are  other matrix models for which it is established that the limiting mean spectral distribution is characterized in terms of the solution of a vector equilibrium problem, thanks to their connection with MOPs and a Riemann-Hilbert asymptotic analysis. Examples can be found in the Hermitian matrix model with an external source \cite{BDK} and the two-matrix model \cite{DKM, DGK}. Nevertheless, it is not clear to the authors how to strengthen such convergence results to get LDPs.

Another question of interest would be to see if the rate function introduced by Cabanal-Duvillard and Guionnet in \cite{CDG} reduces for such matrix models to the functional of a vector equilibrium problem.

\subsection{Strategy of the proof}
In Section \ref{coulombgas}, we show that the joint eigenvalue distribution of the non-centered Wishart matrix  is the marginal distribution of a 2D Coulomb gas with two type particles. The first type of particles are living on $\R_+$ and are exactly the eigenvalues of our matrix model. The second type of particles are abstract ones and live on a $N$-dependent discrete subset of $\R_-$. They moreover attract the first type of particles, expressing the effect of the perturbation. This provides an  insight as to why a functional like \eqref{ratefunctionfake} should be involved as a rate function. To prove such statement, we first describe in Section \ref{MOPensemble} the eigenvalue distribution as a MOP ensemble, and then make use of the Nikishin structure satisfied by the weights associated to the polynomials in Section \ref{nikishin}.

In Section \ref{general}, we investigate the generalized particle system of the whole Coulomb gas for which we state a LDP, see Theorem \ref{generalLDP}. Theorem \ref{LDP} then follows by contraction principle, as described by Corollary \ref{contractionprinciple}. We  define the rate function in Section \ref{ratefunctionsection}, which is a  proper extension of \eqref{ratefunctionfake}, by following the approach of \cite{HK}. From the discrete character of the particles on $\R_-$, a discussion provided in Section \ref{discretness} explains why the constraint set $\Mc$  naturally appears in the variational problem.    

In Section \ref{proof}, we provide a proof of Theorem \ref{generalLDP}. The two main difficulties are the absence of  confining potential acting on the particles living on $\R_-$, and the possible contact of the two different type of particles at the origin. Concerning the lack of confining potential, we  follow an approach developed in \cite{H} by one of the authors and perform a well-adapted compactification procedure. For the contact at the origin, we  isolate the induced singularity and use the discrete character of the particles on $\R_-$ to control it, see the proof of Proposition \ref{LDPUB} and particularly  Lemma \ref{unifint}.

\begin{remark}
From now, we will assume that $N$ is even to simplify the notations and the presentation, but our proof easily adapts to the general case by replacing $N/2$ by $\lceil N/2\rceil$ or $\lfloor N/2 \rfloor$, and also $1/2$ by $\lceil N/2\rceil/N$ or $\lfloor N/2 \rfloor/N$, where it is necessary.
\end{remark}

\section{A 2D Coulomb gas of two type particles}
\label{coulombgas}

In this section we show that the joint eigenvalue distribution is the marginal distribution of a Coulomb  gas having two types of particles. Such a representation follows from a  particular type of MOP ensemble structure satisfied by the eigenvalues that  we describe now. 

\subsection{Multiple orthogonal polynomial ensemble}
\label{MOPensemble}
We first show that the eigenvalues form a MOP ensemble in the sense of \cite{Ku2}, a particular type of Borodin's biorthogonal ensemble \cite{Bo}. For that, introduce the Vandermonde determinant
\eq
\label{vdm}
\Delta_N(\x) = \det \big[x_j^{i-1}\big]_{i,j=1}^N = \prod_{1\leq i < j \leq N}\big(x_j-x_i\big).
\qe
For $\alpha\geq0$ and $a>0$, consider moreover the weight function
\eq
\label{weight}
w_{\alpha,N}(x)=x^{\alpha/2}I_{\alpha}\big(2N\sqrt{ax}\,\big)e^{-Nx},\qquad x\in\R_+,
\qe
 where  we introduced the modified Bessel function of the first kind 
\eq
\label{BF}
I_\alpha(x)=\sum_{k=0}^\infty\frac{1}{k!\Gamma(k+\alpha+1)}\left(\frac{x}{2}\right)^{2k+\alpha}.
\qe
We mention that these weights have been introduced and  studied by Coussement and Van Assche \cite{CVA, CVA2}. We now prove the following.

\begin{lemma}
\label{MOPE}
The  joint  probability density for the eigenvalues $\x=(x_1,\ldots,x_N)\in\R_+^N$ of $X^*X$, when $X$ is drawn according to \eqref{Density}, is a $(N/2,N/2)$-MOP ensemble with weights $w_{\alpha,N}$ and $w_{\alpha+1,N}$, that is given by
\eq
\label{densityev}
\frac{1}{Z_N}\Delta_N(\bs x)\det \left[\begin{matrix} \Big\{ x_j^{i-1}w_{\alpha,N}(x_j)\Big\}_{i,j=1}^{N/2,N}\\ \\ \Big\{ x_j^{i-1}w_{\alpha+1,N}(x_j)\Big\}_{i,j=1}^{N/2,N} \end{matrix}\right]
\qe
where $Z_N$ is a new normalization constant.
\end{lemma}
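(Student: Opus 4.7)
My plan is to compute the joint density of eigenvalues of $X^{*}X$ explicitly by passing to the singular value decomposition of $X$, integrating out the Haar-distributed unitary factors via a rectangular Harish-Chandra–Itzykson–Zuber (HCIZ) type formula, and then reducing the resulting confluent Bessel determinant to a two-weight MOP ensemble by means of a Bessel recurrence.

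First I would write $X=U\Sigma V^{*}$ with $U\in \U(M)$, $V\in \U(N)$ Haar-distributed and $\Sigma$ the $M\times N$ diagonal matrix with entries $\sqrt{x_1},\dots,\sqrt{x_N}$. The Lebesgue measure on $\M_{M,N}(\C)$ then decomposes (up to an inessential finite-group factor) as $dX\propto \Delta_N(\bs x)^2\prod_i x_i^{\alpha}\,d\bs x\,dU\,dV$. Expanding the exponent in \eqref{Density} and using $A^{*}A=aI_N$ together with the special structure \eqref{perturbation}, so that $A^{*}U=\sqrt a\,U_1$ for $U_1$ the top $N\times N$ block of $U$, the density of $\bs x$ becomes proportional to
\[
\Delta_N(\bs x)^2\prod_i x_i^\alpha\, e^{-N\sum_i x_i}\cdot H(\bs x),\qquad H(\bs x):=\int_{\U(M)\times\U(N)} e^{2N\sqrt a\,{\rm Re}\,\tr(U_1\Sigma V^{*})}\,dU\,dV.
\]
The next step is to evaluate $H(\bs x)$ using the rectangular Berezin–Karpelevich/HCIZ integral: for generic left singular values $\sqrt{a_1},\dots,\sqrt{a_N}$ this formula expresses $H$ as a ratio with numerator $\det\!\big[(a_ix_j)^{-\alpha/2}I_\alpha(2N\sqrt{a_ix_j})\big]_{i,j=1}^{N}$ and denominator $\Delta_N(\bs a)\Delta_N(\bs x)$, times explicit polynomial prefactors.

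In our setting all $a_i$ collapse onto the single value $a$, which forces a maximal $N$-fold confluent limit that I would resolve by l'H\^opital. Introducing the entire function $\phi_\alpha(y):=y^{-\alpha/2}I_\alpha(2\sqrt y)=\sum_{k\geq 0}y^k/(k!\,\Gamma(k+\alpha+1))$, which satisfies the clean derivative rule $\phi_\alpha'=\phi_{\alpha+1}$, the limit transforms the ratio into
\[
H(\bs x)\propto \frac{1}{\Delta_N(\bs x)}\det\!\big[x_j^{i-1}\phi_{\alpha+i-1}(N^2ax_j)\big]_{i,j=1}^N
\]
up to harmless constants. The key remaining step is to collapse this $N$-weight determinant to a two-weight one using the classical Bessel recurrence $y\phi_{\alpha+2}(y)=\phi_\alpha(y)-(\alpha+1)\phi_{\alpha+1}(y)$. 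Iterating this identity, each row $x^{i-1}\phi_{\alpha+i-1}(N^2ax)$ can be rewritten as a linear combination with scalar coefficients of the $N$ functions $\{x^k\phi_\alpha(N^2ax):0\le k\le N/2-1\}\cup\{x^k\phi_{\alpha+1}(N^2ax):1\le k\le N/2\}$; a short induction shows that the odd-indexed rows $i=2k-1$ carry $x^{k-1}\phi_\alpha$ as their highest-order term while the even-indexed rows $i=2k$ carry $x^k\phi_{\alpha+1}$, so the change-of-basis matrix is triangular with non-zero diagonal. Reabsorbing the $x^\alpha$ coming from the Jacobian and the $e^{-Nx}$ from the Gaussian factor into the weights \eqref{weight}, and using $\Delta_N(\bs x)^2/\Delta_N(\bs x)=\Delta_N(\bs x)$, the density takes exactly the two-block form \eqref{densityev}.

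The main obstacle will be handling the maximal degeneracy $A^{*}A=aI_N$, which forces the $N$-fold confluent limit of the rectangular HCIZ integral; the fact that the resulting confluent determinant reduces to only two weights (rather than $N$) is a special feature of the Bessel recurrence, and the precise block size $N/2+N/2$ in \eqref{densityev} comes from counting the polynomial degrees produced when iterating that recurrence.
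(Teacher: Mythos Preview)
Your proposal is correct and follows essentially the same route as the paper: singular value decomposition and Weyl integration, the rectangular HCIZ/Berezin--Karpelevich integral (the paper cites Zinn-Justin--Zuber for the same identity), an $N$-fold confluent limit via l'H\^opital, and then the Bessel recurrence to show the span of the $N$ confluent rows coincides with that of $\{x^{i-1}w_{\alpha,N},\,x^{i-1}w_{\alpha+1,N}\}_{i=1}^{N/2}$. Your reformulation via $\phi_\alpha(y)=y^{-\alpha/2}I_\alpha(2\sqrt{y})$ with $\phi_\alpha'=\phi_{\alpha+1}$ makes the confluent step and the two-weight reduction slightly cleaner than the paper's direct manipulation of $I_\alpha$, but the argument is the same in substance.
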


\begin{proof}
We perform a singular value decomposition of $X$, that is we write $X=U_1X_{\rm{diag}}\,U_2$ for unitaries $U_1\in\U_M(\C)$ and $U_2\in\U_N(\C)$  
with
\[
X_{\rm{diag}}=\begin{bmatrix}\sqrt{x_1} & &  \\ &  \ddots&  \\ & &  \sqrt{x_N}  \\ &  \boldsymbol 0_{\alpha}&  \end{bmatrix},
\]
and note that 
\begin{align}
\label{devtrace}
\tr\big((X-A)^*(X-A)\big)= & \;\tr (X^*X)-\tr(XA^*+AX^*)+Na    \\
= &\; \sum_{i=1}^Nx_i- \tr\big(X_{{\rm diag}}U_2A^*U_1^*+(X_{{\rm diag}}U_2A^*U_1^*)^*\big)+Na\nonumber.
\end{align}
By integrating over the unitary groups, it follows from the Weyl integration formula \cite[Section 4.1]{AGZ} and  \eqref{devtrace}, that the probability density  for the $x_i$'s induced by $\p_N$  is given by
\eq
\label{densitya}
\frac{1}{Z_N}\Delta^2_N(\bs x)\prod_{i=1}^N x_i^{\alpha}e^{-Nx_i}\int_{\U_{M}(\C)}\int_{\U_N(\C)}e^{N\tr\big(X_{\rm{diag}}UA^*V^*+(X_{\rm{diag}}UA^*V^*)^*\big)}dU\,dV,
\qe
where $dU$ (resp. $dV$) stands for the Haar measure of $\U_{N}(\C)$ (resp. $\U_M(\C)$) and $Z_N$ is a new normalization constant. Note that one can assume the $x_i$'s to be distinct since this holds almost surely. Consider
\[
B=\begin{bmatrix}\sqrt{b_1} & &  \\ &  \ddots&  \\ & &  \sqrt{b_N} \\  & \mathbf 0_{\alpha}&  \end{bmatrix}\in\mathcal{M}_{M,N}(\C)
\]
with $a \leq b_1 < \cdots < b_N\leq a+1$. Then we have the following formula for the matrix integral  \cite[Section 3.2]{ZZ}
\begin{multline}
\label{ZZ}
\int_{\U_{M}(\C)}\int_{\U_{N}(\C)}e^{N {\rm Tr}\big(X_{\rm{diag}}UB^*V^*+(X_{\rm{diag}}UB^*V^*)^*\big)}dU\,dV = \\
 c_N\left(\,\prod_{i=1}^N\frac{1}{(b_ix_i)^{\alpha/2}}\right)\frac{\det\Big[I_{\alpha}\big(2N\sqrt{b_ix_j}\,\big)\Big]_{i,j=1}^N}{\Delta_N(\bs x)\Delta_N(\bs b)},
\end{multline}
where $c_N$ is a positive number which does not depend on $\bs x$ nor $\bs b$. By continuity of the left-hand side of \eqref{ZZ} in the   $b_i$'s, we then obtain that \eqref{densitya} is proportional to
\[
\label{densityb}
\lim_{b_{N}\rightarrow a}\cdots \lim_{b_1\rightarrow a}\;\left\{\frac{\Delta_N(\bs x)}{\Delta_N(\bs b)}\det\Big[x_j^{\alpha/2}I_{\alpha}\big(2N\sqrt{b_ix_j}\,\big)e^{-Nx_j}\Big]_{i,j=1}^N\right\},
\]
and thus to
\eq
\label{densityc}
\Delta_N(\bs x)\det\left[\frac{\partial^{i-1}}{\partial b^{i-1}}\left\{x_j^{\alpha/2}I_{\alpha}\big(2N\sqrt{bx_j}\,\big)e^{-Nx_j}\right\}\Big|_{b=a}\right]_{i,j=1}^N
\qe
by l'H\^opital Theorem. Finally, using for $x>0$ the relations \cite[P79]{W}
\[
\frac{d}{dx}I_{\alpha}(x)=I_{\alpha+1}(x)+\frac{\alpha}{x} I_\alpha(x),\qquad \frac{d}{dx}I_{\alpha+1}(x) = I_{\alpha}(x)-\frac{\alpha+1}{x} I_{\alpha+1}(x),
\] 
it is easily shown inductively that the linear space spanned by the functions
\[
x\mapsto \frac{\partial^{i-1}}{\partial b^{i-1}}\left\{x^{\alpha/2}I_{\alpha}\big(2N\sqrt{bx}\,\big)e^{-Nx}\right\}\Big|_{b=a}, \qquad i=1,\ldots,N,
\]
matches with the one spanned by
\[
x\mapsto x^{i-1} w_{\alpha,N}(x) , \quad x\mapsto x^{i-1} w_{\alpha+1,N}(x), \qquad i=1,\ldots,N/2.
\]
This ends the proof of Lemma \ref{MOPE}.
\end{proof}

\begin{remark} Although the parameter $\alpha$ associated to the  matrix model is a non-negative integer, the distribution \eqref{densityev} still makes sense for non-negative real $\alpha$. In fact, in  the proofs we provide later, it will not matter whether $\alpha$ is an integer or not. Thus, if one considers the measures $\mu^N$ \eqref{spectralmeasure} associated to $x_i$'s drawn according  to \eqref{densityev} with real $\alpha\geq 0$, the LDP from Theorem \ref{LDP} continues to hold.
\end{remark}

\subsection{Nikishin system}
\label{nikishin}

We now describe a property satisfied by the weights $w_{\alpha,N}$ and $w_{\alpha+1,N}$, a so-called Nikishin structure, and obtain as a consequence an exact Coulomb gas representation for the eigenvalues, see Proposition \ref{loggas}. The reader curious about Nikishin  systems   should have a look at \cite{NS} (where they are called MT systems).

More precisely, it turns out that the ratio of the weights is (almost) the Cauchy transform of some measure, a fact which has already been observed  \cite[Theorem 1]{CVA}. We now make this result slightly more precise with an alternative simple proof. Consider the sequence 
\[
0<j_{\alpha,0}<j_{\alpha,1}<j_{\alpha,2}<\cdots
\] 
of the  positive zeros of the Bessel function of the first kind $J_\alpha$,  a rotated version of $I_\alpha$, i.e 
\eq
\label{relationbesselfunctions}
J_{\alpha}(x)=e^{i\pi\alpha/2}I_{\alpha}(-ix), \qquad x\in\R_+,
\qe
and introduce for each $N$  the sequence of negative  numbers
\eq 
\label{akN}
a_{k,N}=-\left(\frac{j_{\alpha,k}}{2\sqrt{a}N}\right)^2, \qquad k\geq 0.
\qe
We then set for convenience 
\eq
\label{AN}
\A_N=\Big\{ a_{k,N}:\; k\geq 0\Big\}
\qe
and consider the associated normalized counting measure
\eq
\label{sigmaN}
\sigma_N=\frac{1}{N}\sum_{u\in\A_N}\delta(u).
\qe 
The weights $w_{\alpha,N}$ and $w_{\alpha+1,N}$ then satisfy the following relation.
\begin{lemma} 
\label{Nikishin} For all $\alpha\geq 0$ and $a>0$,
\[
\frac{w_{\alpha+1,N}}{w_{\alpha,N}}(x)= \frac{x}{\sqrt{a}}\int \frac{\;d\sigma_N(u)}{x-u}, \qquad x\in\R_+.
\]
\end{lemma}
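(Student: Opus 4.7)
\medskip
\noindent\textbf{Proof plan.} The identity to establish is an elementary consequence of the Mittag--Leffler (partial fraction) expansion of the ratio of two consecutive modified Bessel functions. First I rewrite the left-hand side using the definition \eqref{weight}:
\[
\frac{w_{\alpha+1,N}}{w_{\alpha,N}}(x)=\sqrt{x}\,\frac{I_{\alpha+1}\!\bigl(2N\sqrt{ax}\bigr)}{I_{\alpha}\!\bigl(2N\sqrt{ax}\bigr)},\qquad x\in\R_+,
\]
so the task reduces to producing a partial fraction expansion of $I_{\alpha+1}/I_{\alpha}$.

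The plan is to first establish the corresponding expansion for the ordinary Bessel functions, namely
\[
\frac{J_{\alpha+1}(z)}{J_{\alpha}(z)}=2z\sum_{k\geq 0}\frac{1}{j_{\alpha,k}^{\,2}-z^{2}},
\]
and then transfer it through the relation \eqref{relationbesselfunctions}. The Bessel relation itself comes from combining the classical logarithmic-derivative expansion $J_{\alpha}'(z)/J_{\alpha}(z)=\alpha/z-2z\sum_{k}(j_{\alpha,k}^{\,2}-z^2)^{-1}$ (which follows from the Hadamard product of $J_\alpha$, whose only zeros are $\pm j_{\alpha,k}$) with the standard recurrence $J_{\alpha}'(z)=\frac{\alpha}{z}J_{\alpha}(z)-J_{\alpha+1}(z)$. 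Setting $z=iy$ with $y>0$ and using \eqref{relationbesselfunctions} to convert $J$'s to $I$'s, the $\pm$ sign of $z^2$ flips and I obtain
\[
\frac{I_{\alpha+1}(y)}{I_{\alpha}(y)}=2y\sum_{k\geq 0}\frac{1}{j_{\alpha,k}^{\,2}+y^{2}},\qquad y>0,
\]
an expansion which is incidentally the one used in \cite{CVA} and is what sits behind the Nikishin structure.

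Plugging $y=2N\sqrt{ax}$ into this identity and bookkeeping the factors gives
\[
\sqrt{x}\,\frac{I_{\alpha+1}(2N\sqrt{ax})}{I_{\alpha}(2N\sqrt{ax})}
=4N\sqrt{a}\,x\sum_{k\geq 0}\frac{1}{j_{\alpha,k}^{\,2}+4N^{2}ax}
=\frac{x}{N\sqrt{a}}\sum_{k\geq 0}\frac{1}{x-a_{k,N}},
\]
where the last equality uses precisely the definition \eqref{akN} of $a_{k,N}$. Recognizing the sum as $N$ times the Cauchy transform of $\sigma_N$ from \eqref{sigmaN} yields the claim.

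The only nontrivial point is the Mittag--Leffler expansion of $J_{\alpha+1}/J_{\alpha}$; everything else is algebraic manipulation and a rescaling. I would treat the convergence of the series either by appealing to the Hadamard factorization of $J_\alpha$ (which is of order $1$ with simple zeros $\pm j_{\alpha,k}$) or by invoking the expansion directly as a classical fact, e.g.\ from Watson's treatise \cite{W}. Since the identity must only be verified for real $x>0$, no care about complex singularities is required beyond noting that all $a_{k,N}$ lie strictly in $\R_-$, so the right-hand side is analytic on $\R_+$, matching the analyticity of the left-hand side.
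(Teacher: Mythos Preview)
Your proof is correct and follows essentially the same route as the paper: both reduce the identity to the Mittag--Leffler expansion $I_{\alpha+1}(y)/I_{\alpha}(y)=2y\sum_{k\geq 0}(j_{\alpha,k}^{2}+y^{2})^{-1}$ and then perform the change of variable $y=2N\sqrt{ax}$ together with the definitions \eqref{akN}--\eqref{sigmaN}. The only difference is that the paper simply cites this expansion from \cite{EMOT} (combined with \eqref{relationbesselfunctions}), whereas you sketch its derivation via the Hadamard product of $J_\alpha$ and the recurrence $J_{\alpha}'=\tfrac{\alpha}{z}J_{\alpha}-J_{\alpha+1}$.
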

\begin{proof}
Up to a change of variable, this relation is nothing else than the Mittag-Leffler expansion
\[
\frac{I_{\alpha+1}}{I_{\alpha}}(x)=2x\sum_{k=0}^\infty\frac{1}{x^2+j_{\alpha,k}^2}
\]
which is itself provided by \cite[P61]{EMOT} together with the relation \eqref{relationbesselfunctions}.
\end{proof}

Lemma \ref{Nikishin} is in fact the key to express the eigenvalue density \eqref{densityev} as the marginal distribution of a two type particles Coulomb gas.  Namely, if we introduce a Vandermonde-like product for $(\bs x, \bs u)\in\R_+^{N}\times\R_-^{N/2}$
\eq
\label{VdMmixte}
\Delta_{N,N/2}(\bs x, \bs u) = \prod_{i=1}^{N}\prod_{j=1}^{N/2}\big(x_i-u_j\big)= \prod_{i=1}^{N}\prod_{j=1}^{N/2}\big|x_i-u_j\big|,
\qe
then the following Proposition holds.

\begin{proposition}
\label{loggas}
The probability density   \eqref{densityev} admits the following representation
\[
\frac{1}{Z_N}\int_{\R_-^{N/2}}\frac{\Delta_N^2(\bs x)\Delta_{N/2}^2(\bs u)}{\Delta_{N,N/2}(\bs x,\bs u)}\prod_{i=1}^Nw_{\alpha,N}(x_i)\prod_{i=1}^{N/2}|u_i|d\sigma_N(u_i)
\]
where $Z_N$ is a new normalization constant.
\end{proposition}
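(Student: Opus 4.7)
The plan is to start from the MOP ensemble formula of Lemma~\ref{MOPE} and reduce the determinant appearing there, by means of the Nikishin relation of Lemma~\ref{Nikishin}, to a mixed polynomial/Cauchy determinant which can be evaluated in closed form. First I would factor $w_{\alpha,N}(x_j)$ out of column $j$ of the big determinant in \eqref{densityev}: the upper $N/2$ rows then reduce to the plain Vandermonde rows $x_j^{i-1}$, while the lower $N/2$ rows become $x_j^{i-1}\cdot w_{\alpha+1,N}(x_j)/w_{\alpha,N}(x_j)$. By Lemma~\ref{Nikishin}, the latter equals $a^{-1/2}x_j^{i}\int d\sigma_N(u)/(x_j-u)$, and the $N/2$ resulting factors $a^{-1/2}$ can be absorbed into $Z_N$.

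Next, using the multilinearity of the determinant in its rows, I would introduce a distinct dummy variable $u_i$ integrated against $d\sigma_N$ for each lower row $N/2+i$. This rewrites the determinant as an integral over $(u_1,\ldots,u_{N/2})\in\A_N^{N/2}$ against $\prod_i d\sigma_N(u_i)$ of a new determinant whose upper rows are $x_j^{i-1}$ and whose lower $(N/2+i,j)$-entries are $x_j^i/(x_j-u_i)$. I would then apply the decomposition
\[
\frac{x_j^i}{x_j-u_i}=\bigl(x_j^{i-1}+x_j^{i-2}u_i+\cdots+u_i^{i-1}\bigr)+\frac{u_i^i}{x_j-u_i},
\]
whose polynomial part is a linear combination of the upper Vandermonde rows; the corresponding row operations reduce the lower entries to $u_i^i/(x_j-u_i)$, and pulling $u_i^i$ out of row $N/2+i$ gives a factor $\prod_i u_i^i$ times a ``Cauchy-Vandermonde'' determinant with upper rows $x_j^{i-1}$ and lower rows $1/(x_j-u_i)$.

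The Cauchy-Vandermonde determinant is a classical object, and I would evaluate it as $(-1)^{N/2}\Delta_N(\bs x)\Delta_{N/2}(\bs u)/\prod_{i,j}(x_j-u_i)$ by the standard argument: clearing the denominators produces a polynomial which is antisymmetric in the $x_j$'s and in the $u_i$'s, hence divisible by $\Delta_N(\bs x)\Delta_{N/2}(\bs u)$, after which the equality follows by matching total degrees and a leading coefficient. Because $x_j\ge 0\ge u_i$, one has $\prod_{i,j}(x_j-u_i)=\Delta_{N,N/2}(\bs x,\bs u)$. Combining this with the external factor $\Delta_N(\bs x)$ from \eqref{densityev}, the density is proportional to
\[
\int_{\A_N^{N/2}}\prod_{j=1}^N w_{\alpha,N}(x_j)\,\frac{\Delta_N^2(\bs x)\,\Delta_{N/2}(\bs u)\,\prod_i u_i^i}{\Delta_{N,N/2}(\bs x,\bs u)}\,\prod_{i=1}^{N/2} d\sigma_N(u_i).
\]

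The final step is a symmetrization in $(u_1,\ldots,u_{N/2})$: since the measure and every factor except $\Delta_{N/2}(\bs u)\prod_i u_i^i$ are symmetric, one may replace the latter by its symmetrization, which a direct computation shows equals
\[
\frac{1}{(N/2)!}\,\Delta_{N/2}(\bs u)\,\det\bigl[u_j^i\bigr]_{i,j=1}^{N/2}=\frac{1}{(N/2)!}\,\Delta_{N/2}^2(\bs u)\,\prod_{i=1}^{N/2} u_i,
\]
and then $\prod_i u_i=(-1)^{N/2}\prod_i|u_i|$ since $u_i\le 0$. Absorbing $1/(N/2)!$ and the overall sign into $Z_N$ yields the claimed formula. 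The main obstacle I expect is the bookkeeping around the Cauchy-Vandermonde evaluation and the symmetrization sign; the remaining manipulations are immediate consequences of Lemmas~\ref{MOPE} and~\ref{Nikishin}.
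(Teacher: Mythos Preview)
Your proof is correct and follows essentially the same route as the paper: factor out $w_{\alpha,N}$, invoke the Nikishin relation, pull the $\sigma_N$-integrals out row by row, reduce via $x^i/(x-u)=u^i/(x-u)+\text{(lower powers of }x)$ to a Cauchy--Vandermonde determinant, evaluate it, and symmetrize in $\bs u$. The only cosmetic differences are that the paper cites \cite[Lemma~3]{CVA} for the Cauchy--Vandermonde evaluation rather than sketching it, and explicitly remarks at the end that the resulting $Z_N$ is positive because the symmetrized integrand is nonnegative.
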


The proof we present now is inspired from the proof of \cite[Theorem 2]{CVA}.

\begin{proof} 
Recall that the  density \eqref{densityev} is proportional to
\eq
\label{F1}
\Delta_N(\bs x)\det \left[\begin{matrix} \Big\{ x_j^{i-1}w_{\alpha,N}(x_j)\Big\}_{i,j=1}^{N/2,N}\\ \\ \Big\{ x_j^{i-1}w_{\alpha+1,N}(x_j)\Big\}_{i,j=1}^{N/2,N} \end{matrix}\right].
\qe
We first perform the factorization  
\eq
\label{F2}
\det \left[\begin{matrix} \Big\{ x_j^{i-1}w_{\alpha,N}(x_j)\Big\}_{i,j=1}^{N/2,N}\\ \\ \Big\{ x_j^{i-1}w_{\alpha+1,N}(x_j)\Big\}_{i,j=1}^{N/2,N} \end{matrix}\right] =
\det \left[\begin{matrix} \Big\{ x_j^{i-1}\Big\}_{i,j=1}^{N/2,N}\\ \\ \Big\{ x_j^{i-1}\,\displaystyle \frac{ w_{\alpha+1,N}}{w_{\alpha,N}}(x_j)\Big\}_{i,j=1}^{N/2,N} \end{matrix}\right]\prod_{i=1}^Nw_{\alpha,N}(x_i)
\qe
and then use Lemma \ref{Nikishin} to obtain 
\begin{multline}
\det \left[\begin{matrix} \Big\{ x_j^{i-1}\Big\}_{i,j=1}^{N/2,N}\\ \\ \Big\{ x_j^{i-1}\,\displaystyle \frac{ w_{\alpha+1,N}}{w_{\alpha,N}}(x_j)\Big\}_{i,j=1}^{N/2,N} \end{matrix}\right] \\
= 
 (\sqrt{a}\,)^{-N/2}\int_{\R_-^{N/2}}\det \left[\begin{matrix} \Big\{ x_j^{i-1}\Big\}_{i,j=1}^{N/2,N} \\\Bigg\{\,\displaystyle\frac{x_j^{i}}{x_j-u_i}\,\Bigg\}_{i,j=1}^{N/2,N} \end{matrix}\right] \prod_{i=1}^{N/2}d\sigma_N(u_i).
\end{multline}
Provided with the identity
\[
\frac{x^i}{x-u}=\frac{u^i}{x-u}+\sum_{k=0}^{i-1}x^ku^{i-k+1},
\]
the multilinearity of the determinant gives
\begin{align}
\label{F3}
\det \left[\begin{matrix} \Big\{ x_j^{i-1}\Big\}_{i,j=1}^{N/2,N} \\\Bigg\{\,\displaystyle\frac{x_j^{i}}{x_j-u_i}\,\Bigg\}_{i,j=1}^{N/2,N} \end{matrix}\right] 
= &
\det \left[\begin{matrix} \Big\{ x_j^{i-1}\Big\}_{i,j=1}^{N/2,N} \\ \Bigg\{\,\displaystyle\frac{u_i^{i}}{x_j-u_i}\,\Bigg\}_{i,j=1}^{N/2,N} \end{matrix}\right]  \nonumber  \\
= & 
\det \left[\begin{matrix} \Big\{ x_j^{i-1}\Big\}_{i,j=1}^{N/2,N} \\ \Bigg\{\,\displaystyle\frac{1}{x_j-u_i}\,\Bigg\}_{i,j=1}^{N/2,N} \end{matrix}\right] \prod_{i=1}^{N/2}u_i^{i}.
\end{align}
Now, the well-known identity for mixed Cauchy-Vandermonde determinant, see e.g.\ \cite[Lemma 3]{CVA}, yields
\eq
\label{F4}
\det \left[\begin{matrix} \Big\{ x_j^{i-1}\Big\}_{i,j=1}^{N/2,N} \\ \Bigg\{\,\displaystyle\frac{1}{x_j-u_i}\,\Bigg\}_{i,j=1}^{N/2,N} \end{matrix}\right]= \pm \frac{\Delta_N(\bs x)\Delta_{N/2}(\bs u)}{\Delta_{N,N/2}(\bs x, \bs u)},
\qe
where the sign only depends on $N$. Combining \eqref{F1}--\eqref{F4}, we obtain that \eqref{densityev} is proportional to
\eq
\label{F5}
\int_{\R_-^{N/2}}\frac{\Delta^2_N(\bs x)\Delta_{N/2}(\bs u)}{\Delta_{N,N/2}(\bs x, \bs u)}\prod_{i=1}^Nw_{\alpha,N}(x_i)\prod_{i=1}^{N/2}u_i^{i}\,d\sigma_N(u_i).
\qe
By summing the integrand of  \eqref{F5} over all possible permutations of the $u_i's$ and using the definition \eqref{vdm} of the Vandermonde determinant, we obtain that \eqref{F5} is proportional to 
\[
\int_{\R_-^{N/2}}\frac{\Delta^2_N(\bs x)\Delta^2_{N/2}(\bs u)}{\Delta_{N,N/2}(\bs x, \bs u)}\prod_{i=1}^Nw_{\alpha,N}(x_i)\prod_{i=1}^{N/2}|u_i|\,d\sigma_N(u_i).
\]
Since for every $(\bs x , \bs u)\in\R_+^N\times\A_N^{N/2}$ the quantity
\[
\frac{\Delta^2_{N/2}(\bs x)\Delta_{N/2}^2(\bs u)}{\Delta_{N,N/2}(\bs x,\bs u)}\prod_{i=1}^Nw_{\alpha,N}(x_i)\prod_{i=1}^{N/2}|u_i|
\]
is non-negative (and not identically zero), the new normalization constant $Z_N$ has to be positive. The proof of Proposition \ref{loggas} is therefore complete.
\end{proof}
In the next section, we perform a large deviations investigation for the whole Coulomb gas system.

\section{A LDP for the generalized particle system}
\label{general}
On the basis of the preceding analysis, we investigate in this section the probability distribution on $\R_+^N\times \R_-^{N/2}$
\eq 
\label{generaldistr}
\frac{1}{Z_{N}}\frac{\Delta_{N}^{2}\left(\bs x\right)\Delta_{N/2}^{2}\left(\bs u\right)}{\Delta_{N,N/{2}}\left(\bs x,\bs u\right)}\prod_{i=1}^{N}e^{-NV_N(x_i)}dx_i\prod_{i=1}^{N/2}|u_i|d\sigma_N(u_i)
\qe 
where, with $w_{\alpha,N}$ defined in \eqref{weight}, we introduced for convenience 
\eq
\label{VN}
V_N(x)=-\frac{1}{N}\log w_{\alpha,N}(x), \qquad x\in\R_+.
\qe
The measure $\sigma_N$ has been defined in \eqref{sigmaN}, and $Z_N$ is a normalization constant. 

Consider the  empirical measure for the second type particles
\eq
\nu^{N}=\frac{1}{N}\sum_{i=1}^{N/{2}}\delta(u_{i})
\qe
where the $u_i$'s are distributed according to \eqref{generaldistr} and note that the random vector of measures $(\mu^N,\nu^N)$ takes values in $\Ma\times\Mb$, that we equip with the  product topology. Our aim is to establish a LDP for $\big((\mu^{N},\nu^{N})\big)_N$, from which follows a LDP for $(\mu^N)_N$ by contraction principle. 

We first introduce the rate function in Section \ref{ratefunctionsection}. Then, because of the discrete character of the second type particles, we introduce in Section \ref{discretness} a convenient closed subspace of $\M_{1/2}(\R_-)$ where the $\nu^N$'s actually live. Finally, we state the LDP for $(\mu^N,\nu^N)_N$ in Section \ref{LDPgeneralsection}, see Theorem \ref{generalLDP}, and provide a proof for Theorem \ref{LDP}.  The proof of Theorem \ref{generalLDP} is deferred to Section \ref{proof}.

\subsection{The rate function}
\label{ratefunctionsection}
Our first task is to extend properly the definition of the functional \eqref{ratefunctionfake} to $\M_1(\R_+)\times\M_{1/2}(\R_-)$. A general method to do so has been presented in \cite{HK} and is based on a compactification procedure that we present for our particular case now.

\subsubsection{Compactification procedure}
Let $\s$ be the circle of $\R^2$ centered in $(0,1/2)$ of radius $1/2$ and $T:\R\rightarrow \s$  the associated inverse stereographic projection, namely the map defined by 
\[
T(x)= \left(\frac{x}{1+x^2}, \frac{x^2}{1+x^2}\right), \qquad x\in\R.
\] 
It is known that $T$ is an homeomorphism from $\R$ onto $\s\setminus\{(0,1)\}$, so that $(\s,T)$ is a one point compactification of $\R$. For a measure $\mu$ on $\R$, we  denote by $T_*\mu$  its push-forward by $T$, that is the measure on $\s$ characterized by
\eq
\label{pushf}
\int_{\s} f(x)dT_*\mu(x) = \int_\R f \big(T(x)\big)d\mu(x)
\qe
for every Borel function $f$ on $\s$.  
We denote  the two half-circles 
\eq
\label{halfcircle}
\s_\pm=\Big\{T(x):\; x\in\R_\pm\Big\}\cup\big\{(0,1)\big\}.
\qe
Since $T$ is an homeomorphism from $\R_+$ (resp. $\R_-$) to $\s_+\setminus\{(0,1)\}$ (resp. $\s_-\setminus\{(0,1)\})$, it is easy to see (cf. \cite[Lemma 2.1]{H}) that $T_*$ is a homeomorphism from $\M_1(\R_+)$ to   
\[
\big\{\mu\in\M_{1}(\s_+):\;\mu(\{(0,1)\})=0\big\},
\]
and also from $\M_{1/2}(\R_-)$ to 
\[
\big\{\mu\in\M_{1/2}(\s_-):\;\mu(\{(0,1)\})=0\big\}.
\]

Equipped with such a transformation $T_*$, we are now able to provide a proper definition for the functional \eqref{ratefunctionfake}.

\subsubsection{Definition of the rate function}

Introduce the lower semi-continuous function $\bs\V:\s_+\rightarrow\R\cup\{+\infty\}$ by

\eq
\label{curlyV1}
\bs\V\big(T(x)\big)=x-2\sqrt{ax}-\frac{3}{4}\log(1+x^2), \qquad x\in\R_+,
\qe
and 
\eq
\label{curlyV2}
\bs\V((0,1))=\liminf_{x\rightarrow\infty}\bs\V\big(T(x)\big)=+\infty.
\qe 
We naturally extend the definition of the logarithmic energy \eqref{logenergy} to measures on $\s\subset\R^2$ (where $| \cdot |$ stands for the Euclidean norm) and define the functional $\J$ on $\M_1(\R_+)\times\M_{1/2}(\R_-)$ by

\begin{align}
\label{ratefunction}
\J(\mu,\nu)= & \iint \log\frac{1}{|z-w|}dT_*\mu(z)dT_*\mu(w) - \iint \log\frac{1}{|z-\xi|}dT_*\mu(z)dT_*\nu(\xi) \nonumber \\
 & \qquad+ \iint \log\frac{1}{|\xi-\zeta|}dT_*\nu(\xi)dT_*\nu(\zeta) +\int \bs \V(z)dT_*\mu(z)
\end{align}
when both $T_*\mu$ and $T_*\nu$ have finite logarithmic energy, and set $J(\mu,\nu)=+\infty$ otherwise. 

This definition is motivated by the following observation : from the metric relation \cite[Lemma 3.4.2]{Ash},
\begin{equation}
\label{metric}
|T(x)-T(y)|=\frac{|x-y|}{\sqrt{1+x^2}\sqrt{1+y^2}}, \qquad x, y \in \R,
\end{equation}
we  obtain  with \eqref{pushf}  for any Borel measures $\mu$, $\nu$ on $\R$  the relation

\begin{align}
\label{relationCsphere}
  \iint \log\frac{1}{|z-w|}dT_*\mu(z)dT_*\nu(w) & = \iint \log\frac{1}{|x-y|}d\mu(x)d\nu(y)\nonumber\\
& \qquad +\, \frac{1}{2}\mu(\R)\int\log(1+x^2)d\nu(x)\\
&  \qquad  +\frac{1}{2}\nu(\R)\int\log(1+x^2)d\mu(x)\nonumber,
\end{align}
as soon as one assumes all these quantities to be finite. In this case, we obtain that $\J(\mu,\nu)$ matches with \eqref{ratefunctionfake}. 

Then, the following Proposition is a consequence of  \cite[Theorem 2.6]{HK}.

\begin{proposition}
\label{GRF}\
\begin{itemize}
\item[{\rm (a)}]
The level set
\[
\Big\{(\mu,\nu)\in\M_1(\R_+)\times\M_{1/2}(\R_-) : \;\J(\mu,\nu)\leq \gamma\Big\}
\]
is compact for all $\gamma\in\R$. 
\item[{\rm (b)}]
$\J$ is strictly convex on the set where it is finite.
\end{itemize}
\end{proposition}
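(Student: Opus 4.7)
The plan is to push the problem onto the compact sphere $\s$ via $T_{*}$ and diagonalize the interaction matrix $C=\left(\begin{smallmatrix} 1 & -1/2 \\ -1/2 & 1 \end{smallmatrix}\right)$, whose eigenvalues are $1/2$ and $3/2$ with eigenvectors $(1,1)$ and $(1,-1)$. Writing $I(\eta):=\iint\log\frac{1}{|z-w|}\,d\eta(z)d\eta(w)$ for a signed measure on $\s$, the diagonalization yields the key algebraic identity
$$
\J(\mu,\nu)=\tfrac{1}{4}\,I\bigl(T_{*}\mu+T_{*}\nu\bigr)+\tfrac{3}{4}\,I\bigl(T_{*}\mu-T_{*}\nu\bigr)+\int\bs\V\,dT_{*}\mu,
$$
valid whenever the individual log energies in \eqref{ratefunction} are finite. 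This identity is the engine for both (a) and (b).

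For (a), the main obstacle is lower semicontinuity: treated in isolation, the cross term $-\iint\log\frac{1}{|z-\xi|}\,dT_{*}\mu\,dT_{*}\nu$ in the original definition of $\J$ has the wrong sign, making it only upper semicontinuous, so a termwise approach fails. The decomposition above rescues the situation. First, on the compact sphere, $-\log|z-w|$ is lsc and bounded below (by $0$, since $\mathrm{diam}\,\s=1$), so it is the pointwise supremum of its bounded continuous truncations; consequently $\eta\mapsto I(\eta)$ is lsc on positive measures, so $\tfrac{1}{4}I(T_{*}\mu+T_{*}\nu)$ is lsc in $(\mu,\nu)$. Second, for the signed summand $\tfrac{3}{4}I(T_{*}\mu-T_{*}\nu)$ I would invoke the Fourier/Mercer expansion of the log kernel on the circle $\s$, of the schematic form $-\log|z-w|=\sum_{k\geq 1}\tfrac{1}{k}\bigl(\cos(k\theta_{z})\cos(k\theta_{w})+\sin(k\theta_{z})\sin(k\theta_{w})\bigr)+R(z,w)$ with $R$ bounded continuous (accounting for the Euclidean versus geodesic distance on $\s$), to rewrite $I(\eta)$ as a series of squares of continuous linear functionals of $\eta$ plus a continuous piece, yielding that $I$ is $\geq 0$ and lsc on signed measures of uniformly bounded total variation. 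Finally $\mu\mapsto\int\bs\V\,dT_{*}\mu$ is lsc since $\bs\V$ is lsc and bounded below on $\s_{+}$. Once lsc is in hand, compactness of $\{\J\leq\gamma\}$ reduces to tightness: the confining growth $\bs\V(T(x))=x-2\sqrt{ax}-\tfrac{3}{4}\log(1+x^{2})\to+\infty$ forbids mass of $\mu_{n}$ from escaping to $+\infty$, while a concentration of $T_{*}\nu_{n}$ at the compactification point $(0,1)$ would produce an uncanceled atom in $T_{*}\mu-T_{*}\nu$ (since $T_{*}\mu_{n}$ has already been ruled out there), sending $I(T_{*}\mu-T_{*}\nu)$ to $+\infty$ and contradicting the level-set bound.

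For (b), strict convexity follows from the same identity. The term $\int\bs\V\,dT_{*}\mu$ is affine in $\mu$; for $(\mu_{t},\nu_{t})=(1-t)(\mu_{0},\nu_{0})+t(\mu_{1},\nu_{1})$, the parallelogram identity for quadratic forms gives
$$
(1-t)\J(\mu_{0},\nu_{0})+t\J(\mu_{1},\nu_{1})-\J(\mu_{t},\nu_{t})=t(1-t)\Bigl[\tfrac{1}{4}I(\eta+\zeta)+\tfrac{3}{4}I(\eta-\zeta)\Bigr],
$$
where $\eta:=T_{*}(\mu_{1}-\mu_{0})$ and $\zeta:=T_{*}(\nu_{1}-\nu_{0})$ are signed measures on $\s$ of total mass zero. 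By the classical strict positive-definiteness of the log energy on nonzero neutral measures on a bounded planar set---an immediate consequence of the same Fourier expansion---the bracket is $\geq 0$, vanishing iff $\eta\pm\zeta=0$, i.e.\ iff $(\mu_{0},\nu_{0})=(\mu_{1},\nu_{1})$. This gives strict convexity on $\{\J<+\infty\}$.
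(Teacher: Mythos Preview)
Your approach is correct and is precisely the strategy of \cite{HK}, to which the paper defers this proposition without giving its own proof: compactify via $T_*$, diagonalize the interaction matrix $C$ to rewrite the quadratic part as a positive combination of logarithmic energies of $T_*\mu\pm T_*\nu$, and then exploit lower semicontinuity and the strict positive-definiteness of the logarithmic energy on neutral signed measures on $\s$. Two minor remarks: the ``uncanceled atom'' argument you give for tightness of $\nu$ is more than needed, since an atom in the weak limit $\tilde\nu$ already forces $I(\tilde\nu)=+\infty$ and hence $J=+\infty$ by the very definition \eqref{ratefunction}; and for the Fourier expansion on $\s$ you may simply use that the chord length on a circle of radius $r$ equals $2r|\sin((\theta_z-\theta_w)/2)|$, so the planar kernel differs from the circular one by the additive constant $-\log r$, which is harmless.
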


Because of the discrete character of the $u_i$'s, we need to discuss now several constraint issues.

\subsection{Discreteness and constraint}

In this section we use  the discrete character of the particles on $\R_-$ to build a closed  subset $\E(\R_-)$ of $\M_{1/2}(\R_-)$ such that $\nu^N\in\E(\R_-)$ for all $N$. This will provide an explanation on why the measures on $\R_-$ are restricted to the set $\M_{1/2}^\sigma(\R_-)$ in the minimization problem \eqref{ratefunctionfake}, and moreover will be of important use to control the possible contact at the origin of the different type particles during the proof of Theorem \ref{generalLDP}, see Lemma \ref{unifint}.

\label{discretness}
We say that a measure $\nu\in\Mb$ is \emph{constrained} by a Borel measure $\lambda$ on $\R_-$, that we note $\nu\leq \lambda$, if the signed measure $\lambda-\nu$ is in a fact a (positive) measure. Introduce the set of constrained measures
 
\eq
\M_{1/2}^\lambda(\R_-)=\Big\{\nu\in\Mb :\;\nu\leq\lambda\Big\}
\qe
and note it is closed. Indeed, if $(\nu_N)_N$ is a sequence in $\M_{1/2}^\lambda(\R_-)$ with weak limit $\nu$, then $(\lambda-\nu_N)_N$ converges in the vague topology (i.e the topology coming from duality with the Banach space of compactly supported continuous functions on $\R$) towards $\lambda-\nu$, which is hence not signed. 

Since the random variables $u_i$'s take values in $\A_N$, see \eqref{AN}, we have almost surely
\[
\nu^N=\frac{1}{N}\sum_{i=1}^{N/2}\delta(u_i)\leq \frac{1}{N}\sum_{u\in\A_N}\delta(u)=\sigma_N
\] 
and thus almost surely $\nu^N\in\Mf$ for any $N$.  Consider the measure $\sigma$ on $\R_-$ having for density
\eq
\label{sigma}
\frac{d\sigma(x)}{dx} = \frac{\sqrt{a}}{\pi}|x|^{-1/2}.
\qe
and note that the Radon-Nikodym theorem yields that the definition of $\M_{1/2}^\sigma(\R_-)$ presented in this section matches with \eqref{constrspace}.  It is in fact the limiting distribution of the constraints $\sigma_N$.

\begin{lemma}
\label{sigmaconv}
The sequence $(\sigma_N)_N$ converges towards $\sigma$ in the vague topology.
\end{lemma}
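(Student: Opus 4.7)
The plan is to test the vague convergence against an arbitrary $f\in C_c(\R)$, say with $\mathrm{supp}(f)\subset[-R,R]$, and reduce the claim to the classical asymptotic distribution of the positive zeros of the Bessel function $J_\alpha$. I will use the well-known expansion
$$j_{\alpha,k}=\pi k + O(1) \qquad \text{as } k\to\infty,$$
which in particular gives $j_{\alpha,k+1}-j_{\alpha,k}\to\pi$ and a counting estimate $\#\{k\geq 0 : j_{\alpha,k}\leq T\}=T/\pi+O(1)$ uniform in $T$.

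Since the atoms of $\sigma_N$ are $a_{k,N}=-j_{\alpha,k}^2/(4aN^2)$, I perform the change of variable $u=2\sqrt{a}\sqrt{-x}$ and set $g(u):=f(-u^2/(4a))$, which is continuous and supported in $[0,2\sqrt{aR}]$. Then
$$\int f\,d\sigma_N \;=\; \frac{1}{N}\sum_{k\geq 0} f(a_{k,N}) \;=\; \frac{1}{N}\sum_{k\geq 0} g\!\left(\frac{j_{\alpha,k}}{N}\right),$$
while the same substitution applied to $\int f\,d\sigma$ (using the explicit density in \eqref{sigma}) produces
$$\int f\,d\sigma \;=\; \frac{1}{\pi}\int_0^\infty g(u)\,du.$$

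It therefore suffices to establish the Riemann-sum-type statement $\frac{1}{N}\sum_{k\geq 0} g(j_{\alpha,k}/N)\to \frac{1}{\pi}\int_0^\infty g(u)\,du$. The Bessel-zero asymptotics give $j_{\alpha,k+1}/N - j_{\alpha,k}/N = \pi/N + o(1/N)$ throughout the bounded range of indices where $g$ does not vanish, so the sum rewrites as
$$\frac{1}{\pi}\sum_{k\geq 0} g\!\left(\frac{j_{\alpha,k}}{N}\right)\left(\frac{j_{\alpha,k+1}-j_{\alpha,k}}{N}\right) + o(1),$$
which is a genuine Riemann sum for $\frac{1}{\pi}\int_0^\infty g(u)\,du$ and converges by uniform continuity of $g$ on its compact support.

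The only mildly delicate point is the accumulation near the endpoint $0$, both because $\sigma$ has an integrable singularity there and because the approximation $j_{\alpha,k}\approx \pi k$ is least accurate for small $k$. However, for any $\delta>0$ the number of indices $k$ with $j_{\alpha,k}/N\in[0,\delta]$ is at most $\delta N/\pi + O(1)$ by the counting estimate above, so the contribution of such terms to the sum is $O(\delta\|g\|_\infty)$, uniformly in $N$, and the analogous tail $\frac{1}{\pi}\int_0^\delta g\,du$ is of the same order. An $\varepsilon/3$ argument then handles the origin and completes the proof. No step of this plan requires substantial work; the key inputs are just the asymptotic regularity of the zeros of $J_\alpha$ together with a one-line change of variable.
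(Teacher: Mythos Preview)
Your argument is correct and follows essentially the same route as the paper: both proofs reduce the vague convergence, via the same change of variable $u=j_{\alpha,k}/N$, to the statement that the rescaled Bessel zeros become equidistributed with density $1/\pi$ on $\R_+$, and both extract this from the McMahon asymptotic $j_{\alpha,k}\sim\pi k$. The only cosmetic difference is that the paper checks convergence on intervals $[b,0]$ (distribution functions) while you test against $f\in C_c(\R)$ and run a Riemann-sum argument; these are equivalent standard verifications.
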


\begin{proof}
Since for any $b\leq0$ we clearly have $\lim_{N\rightarrow\infty}\sigma_N(\{b\})=\sigma(\{b\})=0$,
it is enough to show that for all $b<0$, $\lim_{N\rightarrow\infty}\sigma_N([b,0])=\sigma([b,0])$,
that is 
\[
\lim_{N\rightarrow\infty}\frac{1}{N}\sharp \Big\{k:\,a_{k,N}\geq b\Big\}=\frac{\sqrt{a}}{\pi}\int_{b}^0|x|^{-1/2}dx.
\]
By change of variables, it is equivalent to prove that for all $b>0$
\[
\lim_{N\rightarrow \infty}\frac{1}{N}\sharp \Big\{k:\,\frac{j_{\alpha,k}}{N}\leq b\Big\} = \frac{1}{\pi}\int_{0}^bdx=\frac{b}{\pi}.
\]
Fix $\varepsilon>0$ and let $k(N)$  be the integer part of $\frac{(b+\varepsilon)N}{\pi}$. The McMahon asymptotic formula  \cite[formula 9.5.12]{AS}  yields  
\eq
\label{McMahon1}
\lim_{k\rightarrow\infty}\frac{j_{\alpha,k}}{k}=\pi,
\qe
and thus
\[
\lim_{N\rightarrow\infty}\frac{j_{\alpha,k(N)}}{N}=b+\varepsilon.
\]
 As a consequence, we obtain the upper bound
\[
\limsup_{N\rightarrow \infty}\frac{1}{N}\sharp \Big\{k:\,\frac{j_{\alpha,k}}{N}\leq b\Big\}\leq \lim_{N\rightarrow \infty}\frac{k(N)}{N}=\frac{b+\varepsilon}{\pi}.
\]
Similarly, changing $\varepsilon$ by $-\varepsilon$ in the definition of $k(N)$ yields the lower bound
\[
\liminf_{N\rightarrow \infty}\frac{1}{N}\sharp \Big\{k:\,\frac{j_{\alpha,k}}{N}\leq b\Big\}\geq \frac{b-\varepsilon}{\pi},
\] 
and Lemma \ref{sigmaconv} follows by letting $\varepsilon\rightarrow 0$.
\end{proof}

We now introduce the subset $\Me$ of $\Mb$ of  the measures which are either constrained by $\sigma_N$, for some $N$, or by $\sigma$. Namely,

\eq
\label{defER}
\Me = \bigcup_{N=1}^\infty \Mf \bigcup \Mc.
\qe
By construction $\nu^N\in\Me$, for any $N$, and moreover

\begin{lemma}
\label{closed}
$\Me$ is a closed subset of $\Mb$.
\end{lemma}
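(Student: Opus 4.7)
The plan is to take an arbitrary sequence $(\nu_k)_{k\geq 1}$ in $\Me$ with weak limit $\nu \in \Mb$ and show $\nu \in \Me$. By the definition \eqref{defER}, for each $k$ either $\nu_k \in \Mc$ or $\nu_k \in \mathcal{M}_{1/2}^{\sigma_{N_k}}(\R_-)$ for some integer $N_k \geq 1$. Passing to a subsequence, three exhaustive cases arise: (i) infinitely many $\nu_k$ lie in $\Mc$; (ii) infinitely many $\nu_k$ lie in a single $\Mf$ for some fixed $N$; (iii) the constraints $\sigma_{N_k}$ change with $k$ and $N_k \to \infty$.

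In the two easy cases (i) and (ii), the closedness of $\Mc$ and of each $\Mf$ in $\Mb$ has already been established in the paragraph preceding Lemma \ref{sigmaconv} (simply apply that argument with $\lambda = \sigma$ or $\lambda = \sigma_N$, respectively). Hence $\nu$ lies in the corresponding set and a fortiori in $\Me$.

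The remaining case (iii) is the main one. The idea is to test the sign of $\sigma - \nu$ against an arbitrary non-negative compactly supported continuous function $\phi$ on $\R_-$. Since the $\nu_k$'s are finite measures and $\nu_k \to \nu$ weakly, they also converge vaguely; combined with the vague convergence $\sigma_{N_k} \to \sigma$ furnished by Lemma \ref{sigmaconv}, one obtains
\[
\int \phi \, d(\sigma - \nu) \;=\; \lim_{k \to \infty} \int \phi \, d\bigl(\sigma_{N_k} - \nu_k\bigr) \;\geq\; 0,
\]
where the inequality uses that each $\sigma_{N_k} - \nu_k$ is a positive Borel measure. Varying $\phi$ shows $\nu \leq \sigma$, so $\nu \in \Mc \subset \Me$.

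The only non-routine step is case (iii), and the key ingredient that makes it work is precisely the vague approximation $\sigma_N \to \sigma$ coming from Lemma \ref{sigmaconv}; without this compatibility between the discrete constraints and the continuous one, the countable union in \eqref{defER} could well fail to be closed. All the rest is a bookkeeping argument built on the already-established closedness of the individual sets $\Mf$ and $\Mc$.
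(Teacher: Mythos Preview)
Your proof is correct and follows essentially the same approach as the paper: reduce to the case $\nu_k\leq\sigma_{N_k}$ with $N_k\to\infty$ using the closedness of the individual constraint sets, then invoke Lemma~\ref{sigmaconv} to conclude $\nu\leq\sigma$. The paper's version is simply more terse, omitting the explicit case split and the testing against compactly supported $\phi$ that you spell out.
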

\begin{proof}
Let $(\nu_j)_j$ be a sequence in $\Me$ with weak limit $\nu$, and let us show that $\nu\in\E(\R_-)$. Since the sets $\Mc$ and $\Mf$ are closed for all $N$, one may assume that $\nu_j\leq \sigma_{N_j}$, with $\lim_{j\rightarrow\infty}N_j=+\infty$. One then obtains by Lemma \ref{sigmaconv} that $\nu\leq \sigma$, and thus $\nu\in\Me$.
\end{proof} 
Concerning the measure on $\s_-$, see \eqref{halfcircle}, we similarly set
\eq
\label{defES}
\E(\s_-)=\bigcup_{N=1}^\infty \M_{1/2}^{T_*\sigma_N}(\s_-) \bigcup \M_{1/2}^{T_*\sigma}(\s_-),
\qe
so that $T_*\nu^N\in\E(\s_-)$ for any $N$. Moreover, note that since $\nu(\{(0,1)\})=0$ for any $\nu\in\E(\s_-)$, it follows that $T_*$ is an homeomorphism from $\E(\R_-)$ to $\E(\s_-)$, and $\E(\s_-)$ is seen to be a closed subset of $\M_{1/2}(\s_-)$ from Lemma \ref{closed}.

\subsection{LDP for the generalized particle system}
\label{LDPgeneralsection}
We are now in  a position to state the  LDP for $(\mu^N,\nu^N)_N$. Let us precise that we equip $\E(\R_-)$ with the topology induced by $\M_{1/2}(\R_-)$ and $\M_1(\R_+)\times\E(\R_-)$ carries the product one. Then the following LDP holds.

\begin{theorem}
\label{generalLDP}
The sequence $(\mu^N,\nu^N)_N$ satisfies a LDP on $\M_1(\R_+)\times\E(\R_-)$ in the scale $N^2$ with good rate function $\J-\min\J$. More precisely, 
\begin{itemize}
\item[{\rm (a)}] The level set 
\[
\Big\{ (\mu,\nu)\in\M_1(\R_+)\times\E(\R_-) : \; \J(\mu,\nu)\leq \gamma\Big\}
\]
is compact for any $\gamma\in\R$.

\item[{\rm (b)}] $\J$ admits a unique minimizer $(\mu^*,\nu^*)$ on $\M_1(\R_+)\times\E(\R_-)$.

\item[{\rm (c)}] For any closed set $\F\subset \M_1(\R_+)\times\E(\R_-)$,
\[
\limsup_{N\rightarrow\infty}\frac{1}{N^2}\log \p_N\Big((\mu^N,\nu^N)\in\F\Big)\leq - \inf_{(\mu,\nu)\in\F}\Big\{\J(\mu,\nu)-\J(\mu^*,\nu^*)\Big\}.
\]
\item[{\rm (d)}] For any open set $\mathcal O\subset \M_1(\R_+)\times\E(\R_-)$,
\[
\liminf_{N\rightarrow\infty}\frac{1}{N^2}\log \p_N\Big((\mu^N,\nu^N)\in\mathcal O\Big)\geq - \inf_{(\mu,\nu)\in\mathcal O}\Big\{\J(\mu,\nu)-\J(\mu^*,\nu^*)\Big\}.
\]
\end{itemize}
\end{theorem}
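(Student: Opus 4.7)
Parts (a) and (b) follow quickly from the material of Section \ref{ratefunctionsection}. Indeed, by Proposition \ref{GRF}(a) the level sets of $\J$ are compact in $\M_1(\R_+)\times\M_{1/2}(\R_-)$, and their intersection with the closed subset $\M_1(\R_+)\times\Me$ (closed by Lemma \ref{closed}) remains compact, giving (a). For (b), any $\nu\in\Mf$ is purely atomic, since $\sigma_N$ is, and therefore has infinite logarithmic energy, so $\J(\mu,\nu)=+\infty$ on such configurations; any minimizer must thus lie in the convex subset $\M_1(\R_+)\times\Mc$, on which Proposition \ref{GRF}(b) provides strict convexity. Existence of the minimizer then follows from the lower semi-continuity of $\J$ and the compactness of its level sets.

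The bulk of the work lies in (c) and (d). The plan is to follow the Ben Arous--Guionnet strategy, adapted to this two-species Coulomb gas. First, the density \eqref{generaldistr} can be rewritten as
\[
\frac{1}{Z_N}\exp\bigl(-N^2 H_N(\mu^N,\nu^N)\bigr)\prod_{i=1}^N dx_i\prod_{j=1}^{N/2}|u_j|d\sigma_N(u_j),
\]
where $H_N$ is the off-diagonal analogue of $\J$, obtained by restricting the double integrals to the complement of the diagonal and replacing $V(x)=x-2\sqrt{ax}$ by $V_N$ from \eqref{VN}; the extra factor $\prod_j|u_j|=\exp(O(N))$ is negligible at the scale $N^2$. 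Following Section \ref{ratefunctionsection}, I would push everything forward through the stereographic projection $T$ and work on the compact product space $\s_+\times\s_-$, using \eqref{metric} to transfer the logarithmic energies. This compactification is unavoidable since there is no confining potential acting on the $u_i$'s. The choice of the correction $-\tfrac{3}{4}\log(1+x^2)$ in $\bs\V$ is made precisely so that $V_N$ remains finite at the point at infinity in $\s_+$, once one inserts the uniform large-argument asymptotics $I_\alpha(2N\sqrt{ax})\sim e^{2N\sqrt{ax}}/\sqrt{4\pi N\sqrt{ax}}$ for the modified Bessel function.

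For the upper bound (c), I would establish the local estimate
\[
\limsup_{\epsilon\to 0}\limsup_{N\to\infty}\frac{1}{N^2}\log\p_N\bigl(B_\epsilon(\mu_0,\nu_0)\bigr)\le -\J(\mu_0,\nu_0)+\liminf_{N\to\infty}\frac{1}{N^2}\log Z_N
\]
at every $(\mu_0,\nu_0)\in\M_1(\R_+)\times\Me$, then patch via the exponential tightness inherited from the compactness of $\s_+\times\s_-$. The crux is to bound from above the cross-energy $-\iint\log|x-y|\,d\mu^N(x)d\nu^N(y)$ \emph{uniformly} on such a neighborhood, despite the possibility that $\mu^N$ and $\nu^N$ both concentrate at the origin. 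I expect this to be the main technical obstacle. The Nikishin constraint $\nu^N\le\sigma_N$ is the decisive input here: it forces the atoms of $\nu^N$ to lie on the explicit grid $\A_N$, whose spacing near $0$ is of order $1/N^2$ via the McMahon asymptotics \eqref{McMahon1} together with the known behaviour of the small positive zeros $j_{\alpha,k}$. This supplies the required uniform integrability of the mixed logarithmic energy, and allows the singularity at the origin to be isolated and controlled separately.

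For the lower bound (d), the classical approach applies. By lower semi-continuity and density one reduces to targets $(\mu_0,\nu_0)\in\M_1(\R_+)\times\Mc$ whose densities are compactly supported, bounded, and such that the density of $\nu_0$ is bounded strictly away from the upper constraint $\sqrt{a}/(\pi\sqrt{|\cdot|})$. For such a target, I would place $N$ quantile points $x_i^*$ of $\mu_0$ and $N/2$ atoms $u_i^*\in\A_N$ approximating $\nu_0$, and show that on a suitably small product neighborhood of $(\x^*,\y^*)$ in $\R_+^N\times\A_N^{N/2}$ the integrand is bounded below by $\exp(-N^2\J(\mu_0,\nu_0)-o(N^2))$ via a standard Riemann-sum computation of the logarithmic interactions, the strict separation from the constraint ensuring that enough neighboring configurations remain admissible. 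Matching both bounds finally identifies $\lim N^{-2}\log Z_N=-\min\J$ and completes the proof.
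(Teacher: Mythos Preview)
Your outline matches the paper's proof closely: parts (a) and (b) are handled exactly as you say; for (c) and (d) the paper also compactifies via $T$, proves a local weak upper bound on the sphere (Proposition~\ref{LDPUB}), a lower bound on $\R$ by discretization under Assumption~\ref{assump} (Proposition~\ref{LDPLBp}), and then combines them using compactness of $\M_1(\s_+)\times\E(\s_-)$.

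Two points of caution. First, the claim $\prod_j|u_j|=\exp(O(N))$ is false as stated, since the $u_j$ are unbounded below; this factor only becomes harmless \emph{after} compactification, where it turns into $\prod_j|\xi_j|\sqrt{1-|\xi_j|^2}\leq 1$ and the remaining reference measure $\eta_N$ has total mass growing at most polynomially in $N$ (this is the content of Lemma~\ref{CNlim}). Second, your description of the origin control is correct in spirit but vague on the mechanism; the paper's actual device is worth knowing. One writes
\[
2\bs\V_N(z)+\log|z-\xi|=\bigl[\,2\bs\V_N(z)+\log|z-\xi|-\log|\xi|\,\bigr]+\log|\xi|,
\]
observes that the bracket is bounded below on $\s_+\times\s_-$ via the elementary inequality $|x-u|\geq|u|$ on $\R_+\times\R_-$ (Lemma~\ref{boundedM}), and then shows that $\nu\mapsto\int\log|\xi|\,d\nu(\xi)$ is \emph{continuous} on $\E(\s_-)$ thanks to the constraint (Lemma~\ref{unifint}). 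This reduces the two-variable singularity to a one-variable uniform integrability statement, which is precisely what the grid structure of $\A_N$ delivers; a direct attack on the mixed energy without this subtraction would have to cope with an unconstrained $\mu$, which is awkward.
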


A direct consequence of Theorem \ref{generalLDP} is Theorem \ref{LDP}.

\begin{corollary}
\label{contractionprinciple} 
Theorem \ref{LDP} holds true.
\end{corollary}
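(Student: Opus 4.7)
The plan is to deduce Theorem \ref{LDP} from Theorem \ref{generalLDP} by the contraction principle, with the only non-routine observation being that the discrete pieces of $\Me$ which are not already contained in $\Mc$ contribute only $+\infty$ to $\J$ and hence are irrelevant to the infimum defining the contracted rate function.

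First, the coordinate projection $\pi_1:\M_1(\R_+)\times \Me \to \M_1(\R_+)$, $(\mu,\nu)\mapsto \mu$, is continuous for the product topology on the source and the weak topology on the target. Since Theorem \ref{generalLDP} supplies a LDP in scale $N^2$ on $\M_1(\R_+)\times\Me$ with good rate function $\J-\min\J$, the contraction principle gives a LDP for $(\mu^N)_N$ on $\M_1(\R_+)$ in scale $N^2$ with good rate function
\[
I(\mu)=\inf_{\nu\in\Me}\J(\mu,\nu)-\min\J.
\]
Goodness of $I$ yields Theorem \ref{LDP}(a), and the usual upper and lower bounds of the contracted LDP yield (c) and (d), provided we are allowed to restrict the infimum from $\Me$ down to $\Mc$.

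The key step, then, is the identity $\inf_{\nu\in\Me}\J(\mu,\nu)=\inf_{\nu\in\Mc}\J(\mu,\nu)$. Since $\Mc\subset\Me$, the inequality $\leq$ is trivial. For the reverse, I claim that $\J(\mu,\nu)=+\infty$ for every $\nu\in\Me\setminus\Mc$. By definition \eqref{defER}, such a $\nu$ satisfies $\nu\leq\sigma_N$ for some $N\geq 1$. As $\sigma_N$ is purely atomic and supported on the countable set $\A_N$, the domination forces $\nu$ to be supported on $\A_N$, hence $\nu$ is itself a countable sum of point masses with total mass $1/2$, and in particular has at least one atom. The homeomorphism $T$ transports atoms to atoms, so $T_*\nu$ has at least one atom of positive weight $c$; the product measure $T_*\nu\otimes T_*\nu$ then places mass $c^2>0$ on the diagonal, on which the kernel $\log|z-w|^{-1}$ is $+\infty$. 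Thus $T_*\nu$ has infinite logarithmic energy, and by the very definition \eqref{ratefunction} of $\J$, $\J(\mu,\nu)=+\infty$. This proves the claim and hence establishes (c) and (d) with the infimum taken over $\F\times\Mc$.

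For (b), Theorem \ref{generalLDP}(b) produces a unique minimizer $(\mu^*,\nu^*)$ of $\J$ on $\M_1(\R_+)\times\Me$, and the previous paragraph forces $\nu^*\in\Mc$, so $(\mu^*,\nu^*)$ is also the unique minimizer on $\M_1(\R_+)\times\Mc$. The first coordinate $\mu^*$ is then the unique minimizer of $I$, as required. The only delicate point in this reduction is the atom-to-infinite-energy argument above; the substantive difficulty in the whole proof scheme lies upstream, in the proof of Theorem \ref{generalLDP} itself, to which this contraction step is essentially a formal appendix.
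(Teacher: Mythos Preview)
Your proof is correct and follows essentially the same approach as the paper: apply the contraction principle along the projection $\M_1(\R_+)\times\Me\to\M_1(\R_+)$, and reduce the infimum over $\Me$ to an infimum over $\Mc$ by observing that $\J(\mu,\nu)=+\infty$ whenever $\nu\in\Me\setminus\Mc$. The paper states this last fact without justification, whereas you supply the argument (atomicity of $\nu\leq\sigma_N$ forces infinite logarithmic energy of $T_*\nu$), which is indeed the intended reason.
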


\begin{proof}
Theorem \ref{LDP} follows by  contraction principle (see \cite[Theorem 4.2.1]{DZ}) along the projection $\Ma\times\E(\R_-)\rightarrow\Ma$ and the fact that $\J(\mu,\nu)=+\infty$ as soon as $\nu\in\E(\R_-)\setminus\M_{1/2}^\sigma(\R_-)$.
\end{proof}

\section{Proof of Theorem \ref{generalLDP}}
\label{proof}

We first observe that Theorem \ref{generalLDP} (a), (b) easily follow from Proposition \ref{GRF}. 

\begin{proof}[Proof of Theorem \ref{generalLDP} {\rm (a)}, {\rm (b)}.]
Since $\E(\R_-)$ is a  closed subset of $\M_{1/2}(\R_-)$ (see Lemma \ref{closed}), Theorem \ref{generalLDP} (a) follows from Proposition \ref{GRF} (a). The existence of a minimizer for $\J$ on $\M_{1/2}(\R_-)\times\E(\R_-)$ is a consequence of Theorem \ref{generalLDP} (a). Since the set  $\M_{1/2}^\sigma(\R_-)$ is convex, and $\J(\mu,\nu)=+\infty$ as soon as $\nu\in\E(\R_-)\setminus\M_{1/2}^\sigma(\R_-)$, the minimizer is unique by Proposition \ref{GRF} (b).
\end{proof}

Concerning the proof of Theorem \ref{generalLDP} (c), (d), it is usually pretty standard to establish LDP upper and lower bounds by proving a weak LDP and an exponential tightness property (see \cite{DZ} for a general presentation on LDPs). However, because of the lack of confining potential acting on the particles on $\R_-$, it is not clear to the authors how to prove directly  that the sequence $(\mu^N,\nu^N)_N$ is exponentially tight. Instead, we  follow a different strategy  developed in \cite{H} : we  first prove in Section \ref{LDPUB} a weak LDP upper bound for $(T_*\mu^N,T_*\nu^N)_N$, the push-forward of $(\mu^N,\nu^N)_N$ by the inverse stereographic projection $T$. We then establish a LDP lower bound for $(\mu^N,\nu^N)_N$ in Section \ref{LDPLB}, and show in Section \ref{fullLDP} that it is enough to obtain Theorem \ref{generalLDP} (c), (d).  

\subsection{A weak LDP upper bound for $(T_*\mu^N,T_*\nu^N)_N$}
Consider the functional $J$ on $\M_1(\s_+)\times\E(\s_-)$ defined by
\begin{align}
\label{J}
J(\mu,\nu)= & \iint \log\frac{1}{|z-w|}d\mu(z)d\mu(w) - \iint \log\frac{1}{|z-\xi|}d\mu(z)d\nu(\xi) \nonumber \\
 & \qquad+ \iint \log\frac{1}{|\xi-\zeta|}d\nu(\xi)d\nu(\zeta) +\int \bs \V(z)d\mu(z)
\end{align}
if both $\mu$ and $\nu$ have finite logarithmic energy, and set $J(\mu,\nu)=+\infty$ otherwise. We recall that $\bs\V$ has been introduced in \eqref{curlyV1}--\eqref{curlyV2} and $\E(\s_-)$ in \eqref{defES}. Note that, with $\J$ defined in \eqref{ratefunction},  the following relation holds
\eq
\label{relation}
\J(\mu,\nu)=J(T_*\mu,T_*\nu), \qquad (\mu,\nu)\in\M_1(\R_+)\times\E(\R_-).
\qe

Now, choose a metric compatible with the topology of $\M_1(\s_+)\times\E(\s_-)$ and write $\B_{\delta}(\mu,\nu)$ for the open ball of radius $\delta>0$ centered at $(\mu,\nu)$. The aim of this section is to establish the following weak LDP upper bound for $(T_*\nu^N,T_*\nu^N)$.

\begin{proposition}
\label{LDPUB}
For any $(\mu,\nu)\in\M_1(\s_+)\times \E(\s_-)$
\eq
\label{LDPUBeq}
\limsup_{\delta\rightarrow 0}\limsup_{N\rightarrow\infty}\frac{1}{N^{2}}\log\Big\{ Z_N\p_N\Big(\left(T_*\mu^{N},T_*\nu^{N}\right)\in \B_{\delta}(\mu,\nu)\Big)\Big\} \leq-J(\mu,\nu).
\qe
\end{proposition}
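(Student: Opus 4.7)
\emph{Overall plan.} The strategy will be a Laplace-type upper bound performed on the compactifying sphere, starting from the explicit joint density of Proposition \ref{loggas}. For $(\mu,\nu)\in\M_1(\s_+)\times\E(\s_-)$, the aim is to express
\[
Z_N\,\p_N\bigl((T_*\mu^N,T_*\nu^N)\in\B_\delta(\mu,\nu)\bigr)=\int_{\B_\delta} e^{-N^2\mathcal F_N(\bs x,\bs u)}\,d\bs x\,d\sigma_N^{\otimes N/2}(\bs u),
\]
where, after applying the metric identity \eqref{relationCsphere} to each of the three double integrals and absorbing the leftover $\tfrac{3}{4}\log(1+x^2)$ term into the potential, $\mathcal F_N$ becomes the off-diagonal version of $J(T_*\mu^N,T_*\nu^N)$ with $\bs\V$ replaced by a perturbation $\bs\V_N$. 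The factor $\prod_i|u_i|$ only contributes $O(\log N/N)$ to $\mathcal F_N$ and can be absorbed in an error term. A preliminary calculation will verify that the Bessel asymptotic $I_\alpha(z)\sim e^z/\sqrt{2\pi z}$ forces $V_N(x)\to V_\infty(x):=x-2\sqrt{ax}$ locally uniformly on $(0,\infty)$, hence $\bs\V_N\to\bs\V$ locally uniformly on $\s_+\setminus\{(0,0),(0,1)\}$.

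\emph{Self-energies and potential.} Self-interactions $\iint\log\tfrac{1}{|z-w|}\,dT_*\mu^NdT_*\mu^N$ and $\iint\log\tfrac{1}{|\xi-\zeta|}\,dT_*\nu^NdT_*\nu^N$ are lower semicontinuous, so I would obtain the required $\liminf$-inequalities via the standard truncation $K_M(z,w)=\min\bigl(\log\tfrac{1}{|z-w|},M\bigr)$: since $K_M$ is bounded continuous on the compact $\s\times\s$, weak convergence on $\B_\delta$ gives $\iint K_M\,dT_*\mu^NdT_*\mu^N\to\iint K_M\,d\mu\,d\mu$, monotone convergence as $M\to\infty$ recovers the full energies, and removal of the diagonals only costs $M/N\to 0$. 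The potential term $\int\bs\V_N\,dT_*\mu^N$ will converge to $\int\bs\V\,d\mu$ from below thanks to lower semicontinuity of $\bs\V$ combined with uniform convergence $\bs\V_N\to\bs\V$ on compacts of $\s_+$.

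\emph{Main obstacle: contact at the origin.} The critical step will be the cross integral $\iint\log\tfrac{1}{|z-\xi|}\,dT_*\mu^NdT_*\nu^N$, which enters $\mathcal F_N$ with a minus sign and is itself lower semicontinuous; thus $\liminf$ gives the wrong inequality direction and an \emph{upper} bound is required. Both $T_*\mu^N$ and $T_*\nu^N$ can put mass near the contact point $T(0)=(0,0)$ where $\s_+$ and $\s_-$ meet, so this cross integral is a priori unbounded. My plan is to truncate above by $K_M$ and bound the tail uniformly in $N$, exploiting the discreteness of $\A_N$: since $|x-u|=x+|u|\geq|u|$ on $\R_+\times\R_-$ and $\nu^N\leq\sigma_N$,
\[
\iint_{|x-u|<e^{-M}}\log\tfrac{1}{|x-u|}\,d\mu^N\,d\nu^N\;\leq\;\int_{|u|<e^{-M}}\log\tfrac{1}{|u|}\,d\sigma_N(u),
\]
and by Lemma \ref{sigmaconv} together with local integrability of $|u|^{-1/2}\log\tfrac{1}{|u|}$ against $d\sigma$ near zero, the right-hand side tends to $\int_{|u|<e^{-M}}\log\tfrac{1}{|u|}d\sigma(u)$, which in turn tends to $0$ as $M\to\infty$. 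This uniform tail control is precisely the content of the announced Lemma \ref{unifint}, and will deliver $\iint\log\tfrac{1}{|z-\xi|}dT_*\mu^NdT_*\nu^N\leq\iint\log\tfrac{1}{|z-\xi|}d\mu\,d\nu+o_{\delta,M,N}(1)$.

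\emph{Conclusion.} Combining the three ingredients will give, on $\B_\delta(\mu,\nu)$,
\[
\mathcal F_N(\bs x,\bs u)\;\geq\;J(\mu,\nu)-\eta(\delta,M)-o_N(1),
\]
with $\eta(\delta,M)\to 0$ as first $N\to\infty$, then $M\to\infty$, then $\delta\to 0$. Since the reference measure $d\bs x\otimes d\sigma_N^{\otimes N/2}$ on $\B_\delta$ has total mass of order $e^{O(N\log N)}=e^{o(N^2)}$ (bounded Lebesgue region in $\bs x$ and at most $\binom{|\A_N\cap K|}{N/2}\,N^{-N/2}$ configurations of $\bs u$ on a localized $K\subset\R_-$ near the support of $\nu$), one concludes
\[
\limsup_{N\to\infty}\tfrac{1}{N^2}\log\bigl(Z_N\,\p_N(\B_\delta)\bigr)\;\leq\;-J(\mu,\nu)+\eta(\delta,M),
\]
and sending $M\to\infty$ then $\delta\to 0$ yields \eqref{LDPUBeq}. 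The case $J(\mu,\nu)=+\infty$ is handled by monotonicity: apply the same argument to any finite truncation of $J$ and send the truncation to infinity to force the right-hand side to $-\infty$.
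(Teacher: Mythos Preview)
Your overall strategy is correct and closely parallels the paper's: rewrite the density as $e^{-N^2\mathcal F_N}$ with $\mathcal F_N$ the off-diagonal sphere functional, truncate the self-energies by $\min(\cdot,M)$, control the cross-term singularity at the origin via $|x-u|\geq|u|$ together with the constraint $\nu^N\leq\sigma_N$, and then take limits. Your handling of the origin singularity is essentially the content of Lemma~\ref{unifint}, repackaged as a direct tail bound rather than the paper's decomposition $\log|z-\xi|=\log|\xi|+\bigl(\log|z-\xi|-\log|\xi|\bigr)$ combined with Lemma~\ref{boundedM}.

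There is however a genuine gap in your control at infinity, and it shows up in two places. First, the claim that the event $\B_\delta(\mu,\nu)$ on the sphere localizes $\bs x$ to a \emph{bounded} Lebesgue region of $\R_+^N$ is false: even when $\mu(\{(0,1)\})=0$, measures in $\B_\delta(\mu)$ may place mass of order $1/N$ near $(0,1)$, which corresponds to some $x_i$ being arbitrarily large; likewise the $u_i$'s are not localized. Hence $\int_{\B_\delta}d\bs x\,d\sigma_N^{\otimes N/2}=+\infty$, and once you have replaced $\mathcal F_N$ by the bounded truncated functional the resulting upper bound becomes vacuous. The assertion that $\prod_i|u_i|$ contributes only $O(\log N/N)$ is unjustified for the same reason. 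Second, your cross-term tail estimate only treats the contact at the origin: on the sphere $\s_+\cap\s_-=\{(0,0),(0,1)\}$, and $|z-\xi|\to 0$ also when both $x$ and $|u|$ tend to $+\infty$. The inequality $|x-u|\geq|u|$ on $\R$ is useless there, so your uniform tail bound does not cover this region.

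The paper resolves both issues simultaneously. Carrying out the change of variables carefully (Lemma~\ref{generaldistr2}) produces, beyond the $\tfrac34\log(1+x^2)$ absorbed into $\bs\V_N$, residual Jacobian factors $\prod_i(1-|z_i|^2)$ and $\prod_j|\xi_j|\sqrt{1-|\xi_j|^2}$; these are kept in the reference measure, which then has finite total mass by Lemma~\ref{CNlim}. For the cross term, the paper does \emph{not} separate $\int\bs\V_N\,dT_*\mu^N$ from $\iint\log|z-\xi|\,dT_*\mu^N\,dT_*\nu^N$: it bundles them and observes (Lemma~\ref{boundedM}) that $2\bs\V_N(z)+\log|z-\xi|-\log|\xi|$ is bounded below on all of $\s_+\times\s_-$, because the growth of $\bs\V_N$ near $(0,1)$ exactly compensates the blow-up of $\log\frac{1}{|z-\xi|}$ there. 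Separating the potential from the interaction, as you do, discards precisely this compensation. Your plan can be repaired by retaining the Jacobian factors in the reference measure and by coupling the potential to the cross term near $(0,1)$; but as written the argument does not close.
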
 

Concerning the proof, we first describe in Section \ref{RVonsphere} the induced  distribution  for the particles $\big(T(x_i)\big)_{i=1}^N$ and $\big(T(u_i)\big)_{i=1}^{N/2}$ on $\s_+^N\times\s_-^{N/2}$. Then, we show \eqref{LDPUBeq} in Section \ref{coreofproof}, where the main difficulty is to control the singularity created by the fact that the different type particles may meet at the origin when $N\rightarrow\infty$.  To do so, we will use a  few technical lemmas, for which the proofs are deferred to Section \ref{lemmas} for  convenience.

\subsubsection{The induced distribution for the particles on $\s$}
\label{RVonsphere}

Introduce the random variables  on $\s$
\eq
z_i=T(x_i), \quad i=1,\ldots,N,\qquad \xi_i=T(u_i), \quad i=1,\ldots, N/2,
\qe
where the $x_i$'s and the $u_i$'s are distributed according to \eqref{generaldistr}. Thus
\eq
\label{ESDsphere}
T_*\mu^N=\frac{1}{N}\sum_{i=1}^N\delta(z_i),\qquad\qquad T_*\nu^N=\frac{1}{N}\sum_{i=1}^{N/2}\delta(\xi_i). 
\qe
We set the measures  $\lambda=T_*(\bs 1_{\R_+}(x)dx)$ on $\s_+$ and $\eta_N=T_*\sigma_N$  on $\s_-$,  with $\sigma_N$  introduced in \eqref{sigmaN}. From $V_N$  introduced in \eqref{VN}, we also construct the lower semi-continuous function  $\bs\V_N : \s_+\rightarrow\R\cup\{+\infty\}$ by 
\eq
\label{curlyVN1}
\bs\V_N\big(T(x)\big)=V_N(x) - \frac{3}{4}\log(1+x^2), \qquad x\in\R_+,
\qe
and 
\eq
\label{curlyVN2}
\bs\V_N((0,1))=\liminf_{x\rightarrow\infty}\bs\V_N\big(T(x)\big)=+\infty,
\qe
where the latter equality follows from the asymptotic behavior  \cite[formula 9.7.1]{AS}
\eq
\label{asymptoBessel}
I_{\alpha}(x)=\frac{e^{x}}{\sqrt{2\pi x}}\Big(1+O\left(x^{-1}\right)\Big) \qquad \mbox{as   } x\rightarrow +\infty.
\qe 
Then the following  holds.

\begin{lemma}
\label{generaldistr2}
The joint distribution of  $(\bs z,\bs\xi)=(z_1,\ldots,z_N,\xi_1,\ldots,\xi_{N/2})$ is given by 
\[
\frac{1}{Z_N}\left|\frac{\Delta_N^2(\bs z)\Delta_{N/2}^2( \bs \xi)}{\Delta_{N,N/2}(\bs z, \bs \xi)}\right|\prod_{i=1}^N (1-|z_i|^2)e^{-N\bs \V_N(z_i)}d\lambda(z_i)\prod_{i=1}^{N/2}|\xi_i|\sqrt{1-|\xi_i|^2}d\eta_N(\xi_i)
\]
where $Z_N$ has been introduced in \eqref{generaldistr}.
\end{lemma}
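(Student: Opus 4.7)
The proof is a direct change of variables computation. The map $T$ is a diffeomorphism from $\R_+$ onto $\s_+\setminus\{(0,1)\}$ and from $\R_-$ onto $\s_-\setminus\{(0,1)\}$, and the bijection sends the density on $\R_+^N\times\R_-^{N/2}$ given by \eqref{generaldistr} into a density on $\s_+^N\times\s_-^{N/2}$ with respect to the pushforward base measure. By the very definitions $\lambda=T_*(\bs 1_{\R_+}\, dx)$ and $\eta_N=T_*\sigma_N$, the base measure $\prod dx_i\prod d\sigma_N(u_i)$ pushes forward to $\prod d\lambda(z_i)\prod d\eta_N(\xi_i)$ with no Jacobian factors. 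Moreover the normalization constant is unchanged, since $T$ is a bijection onto a set of full measure.

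The main content is to rewrite the density. Using the metric identity \eqref{metric}, each factor in the three Vandermonde-like products of \eqref{VdMmixte} transforms as
\[
|x_i-x_j|=|z_i-z_j|\sqrt{(1+x_i^2)(1+x_j^2)},\qquad |u_i-u_j|=|\xi_i-\xi_j|\sqrt{(1+u_i^2)(1+u_j^2)},
\]
and $|x_i-u_j|=|z_i-\xi_j|\sqrt{(1+x_i^2)(1+u_j^2)}$. A careful counting of multiplicities shows that in the ratio $\Delta_N^2(\bs x)\Delta_{N/2}^2(\bs u)/\Delta_{N,N/2}(\bs x,\bs u)$ the net exponent of $(1+x_i^2)$ equals $(N-1)-N/4=3N/4-1$, while the exponent of $(1+u_j^2)$ equals $(N/2-1)-N/2=-1$. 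Thus the ratio equals $\left|\Delta_N^2(\bs z)\Delta_{N/2}^2(\bs\xi)/\Delta_{N,N/2}(\bs z,\bs\xi)\right|$ times $\prod_i(1+x_i^2)^{3N/4-1}\prod_j(1+u_j^2)^{-1}$.

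It remains to absorb these leftover factors into the one-body weights. For the first-type particles, the identity $1-|T(x)|^2=1/(1+x^2)$ together with the definition \eqref{curlyVN1} yields
\[
(1+x_i^2)^{3N/4-1}e^{-NV_N(x_i)}=(1-|z_i|^2)\,e^{-N\bs\V_N(z_i)}.
\]
For the second-type particles, the relation $|u_j|=|\xi_j|\sqrt{1+u_j^2}$ combined with $1-|\xi_j|^2=1/(1+u_j^2)$ gives
\[
|u_j|\,(1+u_j^2)^{-1}=|\xi_j|\sqrt{1-|\xi_j|^2}.
\]
Assembling these pieces produces exactly the density stated in the lemma. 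There is no genuine obstacle here; the only point requiring care is the bookkeeping of the powers of $(1+x_i^2)$ and $(1+u_j^2)$, which must match the $3/4$ shift hidden in the definition \eqref{curlyVN1}--\eqref{curlyVN2} of $\bs\V_N$ so that the weight on $\s_+$ takes the clean form $(1-|z_i|^2)e^{-N\bs\V_N(z_i)}$.
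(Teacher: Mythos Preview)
Your proof is correct and follows essentially the same approach as the paper: both apply the metric relation \eqref{metric} to each Vandermonde factor, count the resulting powers of $(1+x_i^2)$ and $(1+u_j^2)$ (obtaining $3N/4-1$ and $-1$ respectively), and then absorb these via the identities \eqref{metric2} and the definition \eqref{curlyVN1} of $\bs\V_N$. The only cosmetic difference is that the paper records the three Vandermonde transformations separately before combining them, whereas you go directly to the ratio.
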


\begin{proof}
From the metric relation \eqref{metric} we obtain  
\begin{align*}
\Delta^2_N(\bs x)  & = \big|\Delta^2_N\big(T(\bs x)\big)\big| \prod_{i=1}^N \big(1+x_i^2\,\big)^{N-1} \\
\Delta^2_{N/2}(\bs u)  & = \big|\Delta^2_{N/2}\big(T(\bs u)\big)\big| \prod_{i=1}^{N/2} \big(1+u_i^2\,\big)^{N/2-1}\\
\Delta_{N,N/2}(\bs x,\bs u) & = \big|\Delta_{N,N/2}\big(T(\bs x),T(\bs u)\big)\big|\prod_{i=1}^N \big(1+x_i^2\,\big)^{N/4}\prod_{i=1}^{N/2} \big(1+u_i^2\,\big)^{N/2}.
\end{align*}
Thus, with $\bs \V_N$ defined in \eqref{curlyVN1}, this yields
\begin{multline}
\label{S1}
\frac{\Delta_N^2(\bs x)\Delta_{N/2}^2( \bs u)}{\Delta_{N,N/2}(\bs x, \bs u)}\prod_{i=1}^N e^{-N V_N(x_i)}\prod_{i=1}^{N/2}|u_i|\\
 = \left|\frac{\Delta_N^2\big(T(\bs x)\big)\Delta_{N/2}^2\big(T( \bs u)\big)}{\Delta_{N,N/2}\big( T(\bs x), T(\bs u)\big)}\right|\prod_{i=1}^N \frac{e^{-N\bs \V_N(T(x_i))}}{1+x_i^2}\prod_{i=1}^{N/2} \frac{|u_i|}{1+u_i^2}.
\end{multline}
We moreover obtain  from \eqref{metric} the identities 
\eq
\label{metric2}
\frac{1}{1+x^2}=1-|T(x)|^2, \quad \frac{|x|}{1+x^2}=|T(x)|\sqrt{1-|T(x)|^2},\qquad x\in\R,
\qe
and then from \eqref{S1} 
\begin{multline}
\label{S2}
\frac{\Delta_N^2(\bs x)\Delta_{N/2}^2( \bs u)}{\Delta_{N,N/2}(\bs x, \bs u)}\prod_{i=1}^N e^{-N V_N(x_i)}\prod_{i=1}^{N/2}|u_i|\\
 = \left|\frac{\Delta_N^2\big(T(\bs x)\big)\Delta_{N/2}^2\big(T( \bs u)\big)}{\Delta_{N,N/2}\big( T(\bs x), T(\bs u)\big)}\right|\prod_{i=1}^N \big(1-|T(x_i)|^2\big)e^{-N\bs \V_N(T(x_i))^2}\prod_{i=1}^{N/2}|T(u_i)|\sqrt{1-|T(u_i)|^2}.
\end{multline}
Lemma \ref{generaldistr2} then follows from \eqref{S2} by performing the change of variables $z_i=T(x_i)$ for $i=1,\ldots,N$ and $\xi_i=T(u_i)$ for $i=1,\ldots,N/2$. 
\end{proof}

\subsubsection{Core of the proof for Proposition \ref{LDPUB}}
\label{coreofproof}

Provided with Lemma \ref{generaldistr2}, we now establish  Proposition \ref{LDPUB}, up to the proofs of few  lemmas which are deferred to the next section.

\begin{proof}[Proof of Proposition \ref{LDPUB}]
We obtain from \eqref{ESDsphere} and Lemma \ref{generaldistr2}
\begin{multline}
\label{A1}
Z_N\p_N\Big((T_*\mu^N,T_*\nu^N)\in\B_\delta(\mu,\nu)\Big)\\
=   \int_{\big\{ (\bs z , \bs\xi)\,:\,(T_*\mu^N, T_*\nu^N)\in\B_\delta(\mu,\nu)\big\}}\left|\frac{\Delta_N^2(\bs z)\Delta_{N/2}^2( \bs \xi)}{\Delta_{N,N/2}(\bs z, \bs \xi)}\right|\prod_{i=1}^N e^{-N\bs \V_N(z_i)}\\
\times\prod_{i=1}^N(1-|z_i|^2)d\lambda(z_i)\prod_{i=1}^{N/2}|\xi_i|\sqrt{1-|\xi_i|^2}d\eta_N(\xi_i).
\end{multline}
We write
\begin{align}
\label{A2}
  &\left|\frac{\Delta_N^2(\bs z)\Delta_{N/2}^2( \bs \xi)}{\Delta_{N,N/2}(\bs z, \bs \xi)}\right|\prod_{i=1}^N e^{-N\bs \V_N(z_i)} \nonumber \\
= &  \quad 
\exp \Bigg(- \Bigg\{ \sum_{1\leq i \neq j \leq N}\log\frac{1}{|z_i-z_j|} +  \sum_{1\leq i \neq j \leq N/2}\log\frac{1}{|\xi_i-\xi_j| } \nonumber \\
   &\qquad \qquad \qquad \qquad \qquad\qquad+ \sum_{i=1}^N\sum_{j=1}^{N/2}\Big( 2\bs \V_N(z_i) +\log|z_i-\xi_j| \Big) \Bigg\}\Bigg)\nonumber \\
  \quad 
  = & \quad \exp \Bigg(- N^2\Bigg\{\iint _{z\neq w}\log\frac{1}{|z-w|}dT_*\mu^N(z)dT_*\mu^N(w)\\
 &  \qquad \qquad \qquad\qquad +  \iint _{\xi\neq \zeta}\log\frac{1}{|\xi-\zeta|}dT_*\nu^N(\xi)dT_*\nu^N(\zeta) \nonumber \\
 &  \qquad \qquad \qquad \qquad \qquad+ \iint \Big( 2 \bs\V_N(z) +\log|z-\xi| \Big)dT_*\mu^N(z)dT_*\nu^N(\xi) \Bigg\}\Bigg).\nonumber
\end{align}
Note that, since $T_*\mu^N\otimes T_*\mu^N\big\{(z,w)\in \s_+\times\s_+ :\,z=w\big\}=1/N$ almost surely, for any $M>0$ we have almost surely
\begin{align}
\label{A3}
& \iint _{z\neq w}\log\frac{1}{|z-w|}dT_*\mu^N(z)dT_*\mu^N(w) \nonumber \\
\geq  & \quad \iint \min\Big( \log\frac{1}{|z-w|}, M\Big)dT_*\mu^N(z)dT_*\mu^N(w) -\frac{M}{N}
\end{align}
and similarly 
\begin{align}
\label{A4}
& \iint _{\xi\neq \zeta}\log\frac{1}{|\xi-\zeta|}dT_*\nu^N(\xi)dT_*\nu^N(\zeta) \nonumber \\
\geq  & \quad \iint \min\Big( \log\frac{1}{|\xi-\zeta|}, M\Big)dT_*\nu^N(\xi)dT_*\nu^N(\zeta) -\frac{M}{2N}.
\end{align}
To make  the control of the singularity at the origin easier, we write  for  any $M>0$
\begin{align}
\label{A5}
&\iint \Big( 2 \bs\V_N(z) +\log|z-\xi| \Big)dT_*\mu^N(z)dT_*\nu^N(\xi)\nonumber\\
=& \; \iint \Big( 2 \bs\V_N(z) +\log|z-\xi|-\log|\xi| \Big)dT_*\mu^N(z)dT_*\nu^N(\xi)\nonumber \\
 & \qquad\qquad+\int\log|\xi|dT_*\nu^N(\xi)\nonumber\\
\geq &  \;\iint \min\Big( 2 \bs\V_N(z) +\log|z-\xi|-\log|\xi| , M\Big)dT_*\mu^N(z)dT_*\nu^N(\xi)\\
& \qquad\qquad +\int\log|\xi|dT_*\nu^N(\xi).\nonumber 
\end{align}
Note that the latter step makes sense since $T_*\nu^N$ can not  have a mass point at $(0,0)$. Such a decomposition is motivated by the following lemma. 
\begin{lemma}\
\label{boundedM}
For any $N\in\N\cup\{\infty\}$, the map
\eq
\label{nicemap}
(z,\xi)\mapsto 2 \bs\V_N(z) +\log|z-\xi|-\log|\xi|
\qe
is bounded from below on $\s_+\times\s_-$, where we denote $\bs\V_\infty=\bs\V$.
\end{lemma}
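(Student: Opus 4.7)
The plan is to isolate the three places where the map can fail to be bounded on the compact space $\s_+\times\s_-$: the logarithmic blow-up of $\log|z-\xi|$ when $z$ and $\xi$ collide (which, since $\s_+\cap\s_-=\{(0,0),(0,1)\}$, can only occur at these two common points), the blow-up of $-\log|\xi|$ as $\xi\to(0,0)$, and the blow-up of $\bs\V_N$ at $(0,1)$. The third one is in the good direction and trivially keeps the expression $+\infty$ there (by \eqref{curlyVN2}), so the real task is to show that the two ``negative'' logarithmic singularities at $(0,0)$ cancel against each other, and that the potential $\bs\V_N$ controls the remaining $\log|z-\xi|$ contribution globally.

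The single non-trivial step, and the one I expect to be the main obstacle, is to identify the following geometric inequality:
\[
\log|z-\xi|-\log|\xi|\;\geq\;-\frac{1}{2}\log(1+x^2),\qquad z=T(x)\in\s_+,\;\xi\in\s_-.
\]
Indeed, writing $\xi=T(u)$ with $u\leq 0$, the metric relation \eqref{metric} gives
\[
\frac{|T(x)-T(u)|}{|T(u)|}\;=\;\frac{x+|u|}{|u|\sqrt{1+x^2}}\;\geq\;\frac{1}{\sqrt{1+x^2}},
\]
and the limit case $\xi=(0,1)$ is checked directly from $|T(x)-(0,1)|=1/\sqrt{1+x^2}$ and $|(0,1)|=1$. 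When $\xi=(0,0)$ and $z\neq(0,0)$ the left-hand side is $+\infty$, so the inequality is trivially satisfied. This one estimate simultaneously kills both singularities at $(0,0)$.

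Combined with the definition $\bs\V_N(T(x))=V_N(x)-\tfrac{3}{4}\log(1+x^2)$, the lemma reduces to proving that the one-variable function $x\mapsto 2V_N(x)-2\log(1+x^2)$ is bounded from below on $\R_+$. For $N=\infty$ we have, from \eqref{curlyV1}, $V_\infty(x)=x-2\sqrt{ax}$; the resulting expression $2x-4\sqrt{ax}-2\log(1+x^2)$ is continuous on $[0,\infty)$ and tends to $+\infty$ as $x\to\infty$, hence is bounded below. For finite $N$, I would invoke the elementary upper bound
\[
I_\alpha(y)\;\leq\;\frac{(y/2)^\alpha}{\Gamma(\alpha+1)}\,e^y,\qquad y\geq 0,\;\alpha\geq 0,
\]
which comes from a termwise comparison of the series \eqref{BF} with that of $I_0$ (using $(\alpha+j)\geq j$ and $\binom{2k}{k}\leq 4^k$). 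It yields
\[
V_N(x)\;\geq\;x-2\sqrt{ax}-\frac{\alpha}{N}\log x+c_{N,\alpha,a}
\]
for an explicit constant $c_{N,\alpha,a}$. The function $2V_N(x)-2\log(1+x^2)$ is then continuous on $(0,\infty)$, tends to $+\infty$ as $x\to\infty$ (the $2x$ term dominates) and also as $x\to 0^+$ when $\alpha>0$ (the $-\tfrac{2\alpha}{N}\log x$ term dominates, while for $\alpha=0$ there is no singularity at $0$ at all, since $I_0(0)=1$), so it is bounded below on $\R_+$.

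Beyond the cancellation encoded in the geometric inequality, the argument relies only on standard behavior of $\bs\V_N$ at infinity and on the elementary Bessel bound, so I do not anticipate any further difficulty.
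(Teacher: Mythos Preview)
Your proof is correct and follows essentially the same route as the paper's: the key geometric inequality $|z-\xi|\geq|\xi|\sqrt{1-|z|^2}$ (equivalently your $\log|z-\xi|-\log|\xi|\geq-\tfrac12\log(1+x^2)$) reduces the problem to bounding $V_N(x)-\log(1+x^2)$ from below on $\R_+$, exactly as in \eqref{D1}--\eqref{D3}. The only difference is cosmetic: for finite $N$ the paper appeals to the asymptotic \eqref{asymptoBessel} to handle $x\to\infty$, whereas you use the elementary series bound $I_\alpha(y)\leq(y/2)^\alpha e^y/\Gamma(\alpha+1)$, which has the minor advantage of being fully explicit and simultaneously controlling the $x\to 0^+$ behavior.
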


Now, if we introduce for any $M>0$ and $(\mu,\nu)\in\M_1(\s_+)\times\E(\s_-)$ 
\begin{align}
\label{JMN}
J^M_N(\mu,\nu) & =  \iint \min\Big( \log\frac{1}{|z-w|}, M\Big)d\mu(x)d\mu(y)\\
 & \qquad + \iint \min \Big(2 \bs\V_N(z) +\log|z-\xi|-\log|\xi|, M\Big)d\mu(z)d\nu(\xi)\nonumber \\
 & \qquad \qquad  +\iint \min\Big( \log\frac{1}{|\xi-\zeta|}, M\Big)d\nu(\xi)d\nu(\zeta) + \int\log|\xi|d\nu(\xi)\nonumber,
\end{align}  
we obtain from \eqref{A1}--\eqref{A5} that
\eq
\label{A6}
Z_N\p_N\Big((T_*\mu^N,T_*\nu^N)\in\B_\delta(\mu,\nu)\Big)  \leq C_N\exp\Big\{ - N^2 \inf_{\B_\delta(\mu,\nu)} J_N^M\Big\},
\qe
where we set 
\[
\label{CN}
C_N =e^{3MN/2}\int_{\s_+^N\times \s_-^{N/2}}\prod_{i=1}^N(1-|z_i|^2)d\lambda(z_i)\prod_{i=1}^{N/2}|\xi_i|\sqrt{1-|\xi_i|^2}d\eta_N(\xi_i).
\]
Note that by construction $J^M_N$ is bounded from above, but may take the value $-\infty$ for some $(\mu,\nu)\in\M_1(\s_+)\times\M_{1/2}(\s_-)$. Our choice to restrict $\M_{1/2}(\s_-)$  to $\E(\s_-)$ is motivated by the following key lemma, which yields in particular that $J^M_N$ is well defined and has each of its components bounded on $\M_1(\s_+)\times\E(\s_-)$.

\begin{lemma}
\label{unifint}
The functional 
\[
\nu\mapsto \int\log|\xi|d\nu(\xi)
\]
is continuous, and thus bounded, on $\E(\s_-)$.
\end{lemma}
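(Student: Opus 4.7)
The plan is to establish the uniform integrability estimate
\[
\lim_{\epsilon \to 0}\ \sup_{\nu \in \E(\s_-)} \int_{|\xi| \leq \epsilon} \bigl|\log|\xi|\bigr|\, d\nu(\xi) = 0,
\]
which, together with a continuous cutoff $\chi_\epsilon : \s_- \to [0,1]$ vanishing on $\{|\xi| \leq \epsilon\}$ and equal to $1$ outside $\{|\xi| \leq 2\epsilon\}$, immediately yields continuity. Indeed $\log|\xi|\,\chi_\epsilon$ is continuous and bounded on $\s_-$: the only singularity of $\log|\xi|$ is at $(0,0) = T(0)$, and $|\xi| \leq 1$ on $\s_-$ ensures $\log|\xi| \leq 0$, so upper boundedness of the functional is automatic. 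For a weakly convergent sequence $\nu_n \to \nu$ in $\E(\s_-)$, the splitting $\log|\xi| = \log|\xi|\,\chi_\epsilon + \log|\xi|\,(1-\chi_\epsilon)$ produces a piece that converges via weak convergence and a remainder that is uniformly controlled by the above estimate; letting $n \to \infty$ then $\epsilon \to 0$ gives continuity.

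For the uniform estimate, by definition \eqref{defES} every $\nu \in \E(\s_-)$ is majorized either by $T_*\sigma_N$ for some $N$ or by $T_*\sigma$, so it suffices to bound the integral against these two families. Pulling back through $T$ and using $|T(x)| = |x|/\sqrt{1+x^2}$, the problem reduces to
\[
\lim_{\epsilon \to 0}\ \sup_{N} \int_{-\epsilon}^{0} \bigl|\log|x|\bigr|\, d\sigma_N(x) = 0, \qquad \lim_{\epsilon \to 0}\int_{-\epsilon}^{0} \bigl|\log|x|\bigr|\, d\sigma(x) = 0,
\]
the smooth correction $-\tfrac12 \log(1+x^2)$ being bounded on the region $\{-\epsilon \leq x \leq 0\}$ and contributing harmlessly. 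For $\sigma$, the density $\tfrac{\sqrt a}{\pi}|x|^{-1/2}$ gives a bound of order $\sqrt{\epsilon}\,|\log\epsilon|$. For $\sigma_N$, I would use $a_{k,N} = -(j_{\alpha,k}/(2\sqrt{a}\, N))^2$, so $|a_{k,N}| \leq \epsilon$ iff $s_k := j_{\alpha,k}/N \leq 2\sqrt{a\epsilon}$; by the McMahon asymptotic \eqref{McMahon1} the rescaled points $s_k$ form, up to a vanishing error, an equispaced grid with spacing $\pi/N$, so that
\[
\frac{1}{N} \sum_{k :\, |a_{k,N}| \leq \epsilon} 2\bigl|\log s_k - \log(2\sqrt a)\bigr|
\]
is a Riemann sum for $\tfrac{1}{\pi}\int_0^{2\sqrt{a\epsilon}} 2|\log s - \log(2\sqrt a)|\,ds = O(\sqrt{\epsilon}\,|\log\epsilon|)$, uniformly in $N$.

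The main obstacle is obtaining this estimate uniformly in $N$ despite the smallest point $a_{0,N}$ approaching the origin at rate $1/N^2$, which produces terms $\bigl|\log|a_{k,N}|\bigr| = O(\log N)$ for the finitely many indices $k$ with $j_{\alpha,k}$ bounded. Since $\sigma_N$ weights each atom by $1/N$, any fixed finite collection of such terms contributes $O(\log N / N) \to 0$, while the remaining indices genuinely interpret as a Riemann sum for the integrable singularity $\tfrac{1}{\pi}|\log s|$ on $[0,2\sqrt{a\epsilon}]$; this is precisely why the constraint measure $\sigma$, which carries the limiting density $\tfrac{\sqrt a}{\pi}|x|^{-1/2}$, was structurally forced upon the problem. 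Finally, boundedness of the functional follows from continuity together with the fact that $\E(\s_-)$, being a closed subset of the compact space $\M_{1/2}(\s_-)$ as noted after \eqref{defES}, is itself compact.
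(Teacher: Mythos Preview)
Your proof is correct and follows essentially the same route as the paper: both pull back through $T$, reduce continuity to uniform integrability of $\log|u|$ near the origin over $\E(\R_-)$, and exploit the constraint $\nu\leq\sigma_N$ or $\nu\leq\sigma$ to control the mass there. The paper streamlines your direct Riemann-sum estimate via the Chebyshev-type bound $\int_{|u|\leq\varepsilon}\bigl|\log|u|\bigr|\,d\nu\leq|\log\varepsilon|^{-1}\int_{|u|\leq 1}\log^2|u|\,d\nu$, which reduces the uniformity in $N$ to the single finiteness statement $\sup_N\int_{|u|\leq 1}\log^2|u|\,d\sigma_N<\infty$ and avoids having to track the $\varepsilon$-dependence through the Riemann sum.
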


We observe that 
\begin{lemma}
\label{CNlim}
\eq
\label{CNlimeq}
\limsup_{N\rightarrow\infty}\frac{1}{N^2}\log C_N\leq 0.
\qe
\end{lemma}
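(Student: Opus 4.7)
The plan is to exploit the tensor-product structure of the integrand defining $C_N$ in order to reduce the estimate to two one-dimensional integrals. Since the integrand factorizes across the $z_i$'s and the $\xi_i$'s, Fubini gives
\[
C_N \,=\, e^{3MN/2}\left(\int_{\s_+}(1-|z|^2)\,d\lambda(z)\right)^{N}\left(\int_{\s_-}|\xi|\sqrt{1-|\xi|^2}\,d\eta_N(\xi)\right)^{N/2},
\]
and both one-dimensional integrals can be pulled back to $\R$ by the push-forward formula \eqref{pushf} together with the metric identities \eqref{metric2}:
\[
\int_{\s_+}(1-|z|^2)\,d\lambda(z)=\int_{\R_+}\frac{dx}{1+x^2}=\frac{\pi}{2}, \qquad \int_{\s_-}|\xi|\sqrt{1-|\xi|^2}\,d\eta_N(\xi)=\int_{\R_-}\frac{|u|}{1+u^2}\,d\sigma_N(u).
\]
The first factor is a fixed constant, so it is enough to show that $I_N:=\int_{\R_-}|u|(1+u^2)^{-1}d\sigma_N(u)$ is bounded uniformly in $N$.

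To bound $I_N$, I would split the integration domain at some large $R>0$. On the compact piece $[-R,0]$, Lemma \ref{sigmaconv} yields $\sigma_N([-R,0])\to\sigma([-R,0])<\infty$, and since the integrand is continuous and bounded there, this contribution is uniformly controlled. On the tail $u<-R$, I would use the McMahon asymptotic \eqref{McMahon1} $j_{\alpha,k}\sim \pi k$ to obtain a lower bound $|a_{k,N}|\geq c(k/N)^2$ for $k$ large, and then, bounding $|u|/(1+u^2)\le 1/|u|$,
\[
\int_{u<-R}\frac{|u|}{1+u^2}\,d\sigma_N(u)\leq \frac{1}{N}\sum_{k\,:\,|a_{k,N}|>R}\frac{1}{|a_{k,N}|}\leq \frac{C}{\sqrt R}
\]
uniformly in $N$. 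Together this gives $\sup_N I_N<\infty$.

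Finally, combining the two estimates, one obtains
\[
\log C_N \,\leq\, \frac{3MN}{2}+N\log\frac{\pi}{2}+\frac{N}{2}\log\bigl(\sup_N I_N\bigr) \,=\, O(N),
\]
hence $\frac{1}{N^2}\log C_N=O(1/N)\to 0$, which is exactly \eqref{CNlimeq}. The main (and only mildly delicate) step is the tail bound for $I_N$: because $\sigma_N$ has infinite total mass on $\R_-$, the vague convergence supplied by Lemma \ref{sigmaconv} does not by itself transfer to the $C_0$-type integrand $|u|/(1+u^2)$, and the quantitative McMahon asymptotic for the Bessel zeros is precisely what provides the missing uniform control at infinity.
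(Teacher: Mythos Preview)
Your argument is correct. The factorization of $C_N$ and the pull-back to $\R$ via \eqref{metric2} match the paper exactly; the only difference is in how you control the $\R_-$-integral $I_N=\int_{\R_-}\frac{|u|}{1+u^2}\,d\sigma_N(u)$. The paper does not split into compact piece and tail: it simply bounds $|u|/(1+u^2)\leq 1/|u|$ globally and then invokes the classical Rayleigh identity $\sum_{k\geq 0} j_{\alpha,k}^{-2}=\frac{1}{4(1+\alpha)}$ to obtain $I_N\leq 4aN\sum_k j_{\alpha,k}^{-2}=\frac{aN}{1+\alpha}$, i.e.\ $I_N=O(N)$. This still gives $\log C_N=O(N\log N)=o(N^2)$. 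Your route, using the McMahon asymptotic $j_{\alpha,k}\sim\pi k$ for the tail and vague convergence on compacts, yields the sharper $\sup_N I_N<\infty$, at the cost of a slightly longer argument. Either way the lemma follows; the paper's version is a one-liner once the Rayleigh sum is known, while yours avoids that special identity and uses only the information already established in Lemma~\ref{sigmaconv}.
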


As a consequence, we  obtain from \eqref{A6} 
\eq
\label{A7}
\limsup_{N\rightarrow\infty}\frac{1}{N^{2}}\log \Big\{Z_N\p\Big(\left(T_*\mu^{N},T_*\nu^{N}\right)\in \B_{\delta}(\mu,\nu)\Big)\Big\} \leq - \liminf_{N\rightarrow\infty}\inf_{\B_\delta(\mu,\nu)} J_N^M.
\qe
Now,  introduce for any $M>0$ and $(\mu,\nu)\in\M_1(\s_+)\times\E(\s_-)$ 
\begin{align}
\label{JM}
J^M(\mu,\nu) & =  \iint \min\Big( \log\frac{1}{|z-w|}, M\Big)d\mu(x)d\mu(y)\\
 & \qquad + \iint \min \Big(2 \bs\V(z) +\log|z-\xi|-\log|\xi|, M\Big)d\mu(z)d\nu(\xi)\nonumber \\
 & \qquad \qquad  +\iint \min\Big( \log\frac{1}{|\xi-\zeta|}, M\Big)d\nu(\xi)d\nu(\zeta) + \int\log|\xi|d\nu(\xi)\nonumber
\end{align}
since the following holds
\begin{lemma}
\label{liminfineq}
\eq
\liminf_{N\rightarrow\infty}\inf_{\B_\delta(\mu,\nu)} J_N^M \geq \inf_{\B_\delta(\mu,\nu)} J^M.
\qe
\end{lemma}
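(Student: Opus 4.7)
The plan is to establish a one-sided uniform comparison
\[
J_N^M(\mu',\nu')\;\geq\; J^M(\mu',\nu')-\varepsilon_N,\qquad (\mu',\nu')\in\M_1(\s_+)\times\E(\s_-),
\]
with $\varepsilon_N\to 0$; once in hand, taking the infimum over $\B_\delta(\mu,\nu)$ and $\liminf_{N\to\infty}$ immediately yields the lemma. Inspecting \eqref{JMN} and \eqref{JM}, $J_N^M$ and $J^M$ differ only through the substitution of $\bs\V_N$ for $\bs\V$, and since $a\mapsto\min(a,M)$ is $1$-Lipschitz, the whole problem boils down to a one-sided control of $\bs\V-\bs\V_N$ that is compatible with the clipping by $M$.

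For the bulk estimate I would invoke the elementary upper bound
\[
I_\alpha(y)\;\leq\;\frac{(y/2)^\alpha e^{y}}{\Gamma(\alpha+1)},\qquad y\geq 0,
\]
which is immediate from the Poisson-type integral representation of $I_\alpha$ since $\alpha\geq 0$. Plugged into the definition of $w_{\alpha,N}$ and then of $V_N$, this yields
\[
V_N(x)\;\geq\; V_\infty(x)\;-\;\frac{\alpha\log x}{N}\;-\;\frac{C_\alpha(1+\log N)}{N},\qquad V_\infty(x):=x-2\sqrt{ax}.
\]
On any interval $[0,R]$ the correction $-\alpha(\log x)/N$ is non-negative for $x\in[0,1]$ and uniformly bounded on $[1,R]$, so $\bs\V_N(z)\geq\bs\V(z)-O(\log N/N)$ uniformly on $T([0,R])$. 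For the tail $x\geq R$, the asymptotic \eqref{asymptoBessel} together with the previous bound shows $\bs\V_N(T(x))\to+\infty$ uniformly in $N$ large; combined with Lemma~\ref{boundedM}, one can choose $R=R(M)$ large enough that both $2\bs\V_N(z)+\log|z-\xi|-\log|\xi|$ and $2\bs\V(z)+\log|z-\xi|-\log|\xi|$ exceed $M$ on $T([R,\infty))\times\s_-$, so the two $\min$'s both equal $M$ identically there.

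Merging the bulk and tail regions gives the pointwise inequality
\[
\min\bigl(2\bs\V_N(z)+\log|z-\xi|-\log|\xi|,M\bigr)\;\geq\;\min\bigl(2\bs\V(z)+\log|z-\xi|-\log|\xi|,M\bigr)-O\!\left(\tfrac{\log N}{N}\right)
\]
on $\s_+\times\s_-$; integrating against $d\mu'\otimes d\nu'$ and using $\nu'(\s_-)\leq 1/2$ completes the uniform comparison, since the remaining terms of $J_N^M$ and $J^M$ coincide. The main obstacle is the apparent failure of uniform convergence $V_N\to V_\infty$ near $x=0$ when $\alpha>0$, where $V_N(0)=+\infty$ yet $V_\infty(0)=0$; this is precisely tamed by the favorable sign $-(\alpha\log x)/N\geq 0$ for $x\leq 1$ in the key estimate, which is exactly why only a one-sided inequality is needed.
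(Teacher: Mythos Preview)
Your proposal is correct and follows essentially the same route as the paper's proof: split $\s_+$ into a tail neighborhood of $(0,1)$ where both clipped integrands equal $M$ (using inequality \eqref{D1} from Lemma~\ref{boundedM}), and a bounded part $T([0,R])$ where one shows the one-sided comparison $\bs\V_N\geq\bs\V-o(1)$. The only technical difference is the Bessel estimate you invoke: you use the global bound $I_\alpha(y)\leq (y/2)^\alpha e^{y}/\Gamma(\alpha+1)$, which gives a quantitative $O(\log N/N)$ defect, whereas the paper argues directly that $y^\alpha I_\alpha(y)e^{-y}\leq(2N\sqrt{a})^\alpha$ on $[0,2N\sqrt{aL}]$ for $N$ large, yielding the exact inequality $V_N(x)\geq x-2\sqrt{ax}$ on $[0,L]$ with zero defect; either route closes the argument.
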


It thus follows from \eqref{A7} that
\eq
\label{A9}
\limsup_{N\rightarrow\infty}\frac{1}{N^{2}}\log \Big\{Z_N\p\Big(\left(T_*\mu^{N},T_*\nu^{N}\right)\in \B_{\delta}(\mu,\nu)\Big)\Big\} \leq - \inf_{\B_\delta(\mu,\nu)} J^M.
\qe
Note that for any $M>0$, the function
\[
(z,w)\mapsto \min\Big( \log\frac{1}{|z-w|}, M\Big)
\]
is continuous on $\s\times\s$, so that the functional 
\[
\mu \mapsto \iint \min\Big( \log\frac{1}{|z-w|}, M\Big)d\mu(z)d\mu(w)
\]
is continuous on $\M_1(\s_+)$, as well on $\E(\s_-)$.  Lemma \ref{boundedM} moreover yields for any $M>0$ the continuity of 
\[
(\mu,\nu)\mapsto  \iint \min \Big(2 \bs\V(z) +\log|z-\xi|-\log|\xi|, M\Big)d\mu(z)d\nu(\xi).
\]
Thus, this shows  with Lemma \ref{unifint} that $J^M$ defined in \eqref{JM} is continuous on $\M_{1}(\s_+)\times\E(\R_-)$, and  we obtain by letting $\delta\rightarrow 0$ in \eqref{A9} that
\eq
\label{A10}
 \limsup_{\delta\rightarrow0}\limsup_{N\rightarrow\infty}\frac{1}{N^{2}}\log \Big\{Z_N\p\Big(\left(T_*\mu^{N},T_*\nu^{N}\right)\in \B_{\delta}(\mu,\nu)\Big)\Big\}\leq - J^M(\mu,\nu).
\qe
Letting $M\rightarrow+\infty$ in \eqref{A10}, the monotone convergence theorem yields 
\begin{align}
\label{A12}
 & \limsup_{\delta\rightarrow0}\limsup_{N\rightarrow\infty}\frac{1}{N^{2}}\log \Big\{Z_N\p\Big(\left(T_*\mu^{N},T_*\nu^{N}\right)\in \B_{\delta}(\mu,\nu)\Big)\Big\}\nonumber \\
 \leq &\; - \Bigg\{\iint\log\frac{1}{|z-w|}d\mu(z)d\mu(w) \\
 & \qquad \quad +\iint \Big(2\bs\V(z)+\log|z-\xi|-\log|\xi|\Big)d\mu(z)d\nu(\xi)\nonumber\\
  & \qquad \quad \qquad +\iint\log\frac{1}{|\xi-\zeta|}d\nu(\xi)d\nu(\zeta) +\int\log|\xi|d\nu(\xi)\Bigg\}.\nonumber
\end{align}
Finally, in order to obtain Proposition \ref{LDPUB} from \eqref{A12}, it is  sufficient to show that, with $J$ defined in \eqref{J},
\begin{align}
\label{representation}
J(\mu,\nu)= & \iint\log\frac{1}{|z-w|}d\mu(z)d\mu(w)\\
& \quad  +\iint \Big(2\bs\V(z)+\log|z-\xi|-\log|\xi|\Big)d\mu(z)d\nu(\xi) \nonumber\\
 & \qquad\qquad +\iint\log\frac{1}{|\xi-\zeta|}d\nu(\xi)d\nu(\zeta)+\int\log|\xi|d\nu(\xi)\nonumber
\end{align}
for all $(\mu,\nu)\in\M_1(\s_+)\times\E(\s_-)$.  Note that if $\mu$ or $\nu$ has infinite logarithmic energy, then  Lemmas \ref{boundedM} and \ref{unifint} yield that the right-hand side of \eqref{representation} is $+\infty$.  If both $\mu$ and $\nu$ have finite logarithmic energy, then it is known  (see e.g. \cite[Section 3.1]{HK}) that
\[
\iint\log\frac{1}{|z-\xi|}d\mu(z)d\nu(\xi)<+\infty,
\]
and thus since $\V$ is bounded from below
\begin{align*}
 & \iint \Big(2\bs\V(z)+\log|z-\xi|-\log|\xi|\Big)d\mu(z)d\nu(\xi)+\int\log|\xi|d\nu(\xi)\\
=& \quad \int \bs\V(z)d\mu(z) -\iint\log\frac{1}{|z-\xi|}d\mu(z)d\nu(\xi),
\end{align*}
which proves \eqref{representation}. The proof of Proposition \ref{LDPUB} is therefore complete, up to the proofs of the lemmas.

\end{proof}

\subsubsection{Proofs of  Lemmas \ref{boundedM}, \ref{unifint}, \ref{CNlim}, and \ref{liminfineq}}
\label{lemmas}

\begin{proof}[Proof of Lemma \ref{boundedM}]
We have the inequality
\eq
\label{ineqsphere}
|z-\xi|\geq |\xi|\sqrt{1-|z|^2}, \qquad z\in\s_+,\quad \xi\in\s_-.
\qe
Indeed, \eqref{ineqsphere} trivially holds if $z=(0,1)$. Since for any $z\in\s$  the Pythagorean theorem yields $|z-(0,1)|=\sqrt{1-|z|^2}$,  \eqref{ineqsphere}  moreover holds when $\xi=(0,1)$. If none of $z$ or $\xi$ is $(0,1)$, then  there exist $x\in\R_+$ and $u\in\R_-$ such that $|z-\xi|=|T(x)-T(u)|$. Inequality \eqref{ineqsphere} then follows from the metric relations \eqref{metric}, \eqref{metric2} and the inequality $|x-u|\geq |u|$ when $(x,u)\in\R_+\times\R_-$. 

As a consequence of the inequality  \eqref{ineqsphere}, we obtain for any $(z,\xi)\in\s_+\times\s_-$ and $N\in\N\cup\{\infty\}$
\eq
\label{D1}
 2 \bs\V_N(z) +\log|z-\xi|-\log|\xi|\geq 2 \bs\V_N(z) + \frac{1}{2}\log(1-|z|^2).  
\qe
Now, from the the metric relations \eqref{metric2} we obtain
\eq
\label{D2}
\inf_{z\in\s_+}\Big(2\bs\V_\infty(z) + \frac{1}{2}\log(1-|z|^2)\Big)
= 2 \inf_{x\in \R_+}\Big(x-2\sqrt{ax}-\log(1+x^2)\Big)>-\infty,
\qe
and similarly for any $N\in\N$, 
\eq
\label{D3}
 \inf_{z\in\s_+}\Big(2\bs\V_N(z) + \frac{1}{2}\log(1-|z|^2)\Big)
=2 \inf_{x\in \R_+}\Big(V_N(x)-\log(1+x^2)\Big)>-\infty,
\qe
where the latter inequality follows from the definition \eqref{VN} of $V_N$ and the asymptotic behavior \eqref{asymptoBessel} of the Bessel function. Lemma \ref{boundedM} then follows from \eqref{D1}--\eqref{D3}.
 \end{proof}

 \begin{proof}[Proof of Lemma \ref{unifint}]  Since  $T_*$ is an homeomorphism from $\E(\R_-)$ to $\E(\s_-)$, we obtain with the metric relation \eqref{metric} that for any $\nu\in\E(\s_-)$
 \begin{align*}
 \int_{\s_-}\log|\xi|d\nu(\xi) & =\int_{\R_-}\log|T(u)|d{T_*}^{-1}\nu(u)\\
  & = \int_{\R_-}\log\left(\frac{|u|}{\sqrt{1+|u|^2}}\right)d{T_*}^{-1}\nu(u)\\
  & = \int _{|u|\leq 1}\log|u|d{T_*}^{-1}\nu(u)+ F(\nu),
 \end{align*}
 where $F$ is a continuous function on $\E(\s_-)$.  Lemma \ref{unifint} is thus equivalent to the continuity on $\E(\R_-)$ of the functional 
 \eq
 \label{logR}
 \nu\mapsto \int _{|u|\leq 1}\log|u|d\nu(u),
 \qe
which is itself equivalent to the uniformly integrability  of $u\mapsto \bs1_{|u|\leq 1}\log|u|$ with respect to the measures of $\E(\R_-)$, namely to
 \eq
 \label{D5'}
 \lim_{\varepsilon\rightarrow 0}\sup_{\nu\in\E(\R_-)}\int_{|u|\leq\,\varepsilon} \big|\bs1_{|u|\leq 1}\log|u|\,\big|d\nu(u)=0.
 \qe
 Since for any $\varepsilon>0$ and any $\nu\in\E(\R_-)$
 \[
\int_{|u|\leq\,\varepsilon} \big|\bs1_{|u|\leq 1}\log|u|\,\big|d\nu(u)\leq\frac{1}{|\log(\varepsilon)|}\int_{|u|\leq 1}\log^2|u|d\nu(u),
 \]
it is enough to show that 
 \eq
 \label{D5}
 \sup_{\nu\in\E(\R_-)}\int_{|u|\leq 1}\log|u|^2d\nu(u)<+\infty
 \qe
 in order to obtain \eqref{D5'}. By definition \eqref{defER} of $\E(\R_-)$ we have
 \begin{align}
 \label{D6}
 &  \sup_{\nu\in\E(\R_-)}\int_{|u|\leq 1}\log^2|u|d\nu(u)  \\
 \leq  &\qquad \max \left\{ \sup_{N}\int_{|u|\leq 1}\log^2|u|d\sigma_N(u)\;,\;\int_{|u|\leq 1}\log^2|u|d\sigma(u)\right\}\nonumber.
 \end{align}
 First, it follows from the definition \eqref{sigma} of $\sigma$ that 
 \eq
 \label{D7}
 \int_{|u|\leq 1}\log^2|u|d\sigma(u) = \frac{\sqrt{a}}{\pi}\int_0^1x^{1/2}\log^2(x)dx<+\infty.
 \qe
 Then, the definition \eqref{sigmaN} of $\sigma_N$ gives
 \eq
 \label{D8}
 \int_{|u|\leq 1}\log^2|u|d\sigma_N(u)= \frac{1}{N}\sum_{k\geq 0 \,:\; \frac{j_{\alpha,k}}{2\sqrt{a}N}\leq 1}\log^2\left(\frac{j_{\alpha,k}}{2\sqrt{a}N}\right)^2.
 \qe
It is a consequence of the  McMahon expansion formula \cite[formula 9.5.12]{AS} that
 \eq
 \label{McMahon2}
 \lim_{k\rightarrow\infty}\big(j_{\alpha,k+1}-j_{\alpha,k}\big)=\pi,
 \qe
and this provides the existence of $C>0$ independent of $N$  satisfying 
\begin{align}
\label{D9}
 & \frac{1}{N}\sum_{k \geq 0\,:\; \frac{j_{\alpha,k}}{2\sqrt{a}N}\leq 1}\log^2\left(\frac{j_{\alpha,k}}{2\sqrt{a}N}\right)^2  \nonumber \\
\leq & \quad  C\frac{j_{\alpha,0}}{2\sqrt{a}N}\log^2\left(\frac{j_{\alpha,0}}{2\sqrt{a}N}\right)^2 \nonumber \\
& \quad \quad+\; C\Big(\frac{j_{\alpha,k}}{2\sqrt{a}N} - \frac{j_{\alpha,k-1}}{2\sqrt{a}N}\Big)\sum_{k>0 \,:\; \frac{j_{\alpha,k}}{2\sqrt{a}N}\leq 1}\log^2\left(\frac{j_{\alpha,k}}{2\sqrt{a}N}\right)^2 \nonumber\\
\leq & \quad C\int_0^1\log^2(x^2)dx \;<+\infty.
\end{align}
Indeed, the latter inequality follows by splitting the integration domain and from the fact that $x\mapsto\log^2(x^2)$ is non-negative and decreasing on $[0,1]$.  Combining \eqref{D6}--\eqref{D8} and \eqref{D9} we obtain \eqref{D5}, which completes the proof of Lemma \ref{unifint}.

\end{proof}

\begin{proof}[Proof of Lemma \ref{CNlim}]
From the metric relations \eqref{metric2} we obtain
\begin{align}
\label{CNineq}
C_N =  & \; e^{3N/2}\int_{\s_+^N\times \s_-^{N/2}}\prod_{i=1}^N(1-|z_i|^2\,)d\lambda(z_i)\prod_{i=1}^{N/2}|\xi_i|\sqrt{1-|\xi_i|^2}\,d\eta_N(\xi_i)\nonumber\\
= & \; e^{3N/2}\left(\int_{\s_+} (1-|z|^2\,)d\lambda(z)\right)^N\left(\int_{\s_-}|\xi|\sqrt{1-|\xi|^2}\,d\eta_N(\xi)\right)^{N/2}\nonumber\\
= & \; e^{3N/2}\left(\int_{\R_+} \frac{1}{1+x^2}dx\right)^N\left(\int_{\R_-}\frac{|u|}{1+u^2}d\sigma_N(u)\right)^{N/2}.
\end{align}
Since the definition \eqref{sigmaN} of $\sigma_N$ yields
\[
\int_{\R_-} \frac{|u|}{1+u^2}d\sigma_N(u) \leq \int_{\R_-}\frac{1}{|u|}d\sigma_N(u)= 4aN\sum_{k=0}^\infty\frac{1}{j_{\alpha,k}^2},
\]
then Lemma \ref{CNlim} follows from \eqref{CNineq} and  the identity \cite[Section 15.51]{W}
\[
\sum_{k=0}^\infty\frac{1}{j_{\alpha,k}^2} = \frac{1}{4(1+\alpha)}<+\infty.
\]
\end{proof}

\begin{proof}[Proof of Lemma \ref{liminfineq}] 
We write 

\eq
\label{ineqA}
\liminf_{N\rightarrow\infty}\inf_{\B_\delta(\mu,\nu)} J_N^M \geq \inf_{\B_\delta(\mu,\nu)} J^M +\liminf_{N\rightarrow\infty}\inf_{\B_\delta(\mu,\nu)}\big(J^M_N-J^M\big)
\qe
and note that, from the definitions \eqref{JMN} and \eqref{JM} of $J^M_N$ and $J^M$ respectively, we have
\begin{align}
\label{ineqB}
 & \inf_{\B_\delta(\mu,\nu)}\big(J^M_N-J^M\big) \geq \frac{1}{2}\inf_{(z,\xi)\in\s_+\times\s_-}\Big\{\min \Big(2 \bs\V_N(z) +\log|z-\xi|-\log|\xi|, M\Big) \nonumber\\
& \qquad\qquad\qquad  \qquad\qquad\qquad \qquad -\min \Big(2 \bs\V(z) +\log|z-\xi|-\log|\xi|, M\Big)\Big\}. 
\end{align}

The inequality  \eqref{D1}  and the fast growth of $\bs\V(z)$ and $\bs\V_N(z)$ as $z\rightarrow(0,1)$, which follows from the definitions \eqref{curlyV1}--\eqref{curlyV2}, \eqref{curlyVN1}--\eqref{curlyVN2} and the asymptotic behavior \eqref{asymptoBessel}, provide  the existence of a neighborhood $\mathcal{N}_\infty\subset\s_+$ of $(0,1)$ such that  for all $N$ 
\begin{align}
\label{ineqC}
 &\min \Big(2 \bs\V_N(z) +\log|z-\xi|-\log|\xi|, M\Big) \\
&\;=\; \min \Big(2 \bs\V(z) +\log|z-\xi|-\log|\xi|, M\Big) = M, \qquad (z,\xi)\in\mathcal{N}_\infty\times\s_-.\nonumber
\end{align}
Next, we claim the existence of a subset $\mathcal{N}_0\subset\s_+$  satisfying $\mathcal N_0\cup\mathcal N_\infty=\s_+$ and    
\begin{align}
\label{ineqD}
 &\min \Big(2 \bs\V_N(z) +\log|z-\xi|-\log|\xi|, M\Big) \\
&\;\geq\; \min \Big(2 \bs\V(z) +\log|z-\xi|-\log|\xi|, M\Big), \qquad (z,\xi)\in\mathcal{N}_0\times\s_-,\nonumber
\end{align}
for any $N$ sufficiently large, so that Lemma \ref{liminfineq} would follow by combining \eqref{ineqA}--\eqref{ineqD}.

To show this it is enough to prove that for any $L>0$  there exists  $N_L\geq 0$ such that for all $N\geq N_L$
\[
V_N(x)-x+2\sqrt{ax}\;\geq 0, \qquad x\in[0,L],
\]
or equivalently (see the definitions \eqref{VN} and \eqref{weight})
\eq
\label{ineqE}
y^{\alpha}I_\alpha(y)e^{-y}\leq (2N\sqrt{a}\,)^\alpha, \qquad  y\in[0,2N\sqrt{aL}].
\qe
Indeed, if we choose $\mathcal N_0=T([0,L])$ with  $L$ large enough so that $\mathcal N_0\cup\mathcal N_\infty=\s_+$,  then \eqref{ineqD} would hold for any $N\geq N_L$ as a consequence of \eqref{ineqE}. Given $L>0$, if $\alpha=0$ then \eqref{ineqE} holds because $I_0(0)=1$ and $y\mapsto I_0(y)e^{-y}$ is decreasing on $\R_+$. If $\alpha>0$, it is then easy to see from the asymptotic behavior $y^{\alpha}I_\alpha(y)e^{-y}=(2\pi)^{-1/2}y^{\alpha-1/2}(1+O(y^{-1}))$ as $y\rightarrow+\infty$, provided by \eqref{asymptoBessel}, that  \eqref{ineqE} is satisfied for any $N$ large enough.  This completes the proof of Lemma \ref{liminfineq}.

\end{proof}

We now provide a proof for the announced LDP lower bound.

\subsection{A LDP lower bound for $(\mu^N,\nu^N)_N$}
\label{LDPLB}

The aim of this section is to establish the following.

\begin{proposition}
\label{LDPLBp}
For any open set $\mathcal{O} \subset\Ma\times\Me$
\[
\liminf_{N\rightarrow\infty}\frac{1}{N^{2}}\log \Big\{Z_N\p_N\Big(\left(\mu^{N},\nu^{N}\right)\in \mathcal{O}\Big)\Big\} \geq-\inf_{(\mu,\,\nu)\in\mathcal{O}}\J(\mu,\nu).
\]
\end{proposition}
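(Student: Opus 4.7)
The plan is the standard lower bound strategy: reduce to showing that for any fixed $(\mu,\nu)\in\mathcal{O}$ with $\J(\mu,\nu)<\infty$,
\[
\liminf_{\delta\to 0}\liminf_{N\to\infty}\frac{1}{N^2}\log\Bigl\{Z_N\p_N\bigl((\mu^N,\nu^N)\in \B_\delta(\mu,\nu)\bigr)\Bigr\}\ \geq\ -\J(\mu,\nu),
\]
and then produce an explicit configuration of particles that witnesses this bound. Since $\J(\mu,\nu)=+\infty$ whenever $\nu\in\E(\R_-)\setminus\Mc$ (as noted in the proof of Corollary \ref{contractionprinciple}), we may assume $\nu\leq\sigma$.

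The first step is an approximation argument. By Proposition \ref{GRF}(b) and the density of nice measures in the finite-energy sublevel sets of $\J$, it suffices to establish the inequality when $(\mu,\nu)$ is of the following special form: $\mu$ has a bounded density supported in a compact subset of $(0,\infty)$, and $\nu=(1-\epsilon)\nu_0+\epsilon\rho$ where $\nu_0\leq\sigma$, $\rho$ has a bounded density supported in a compact subset of $(-\infty,0)$ with $d\rho/dx<(1-\epsilon')\,d\sigma/dx$ pointwise, so that $d\nu/dx\leq(1-\epsilon\epsilon')\,d\sigma/dx$ with strict gap, and both supports stay away from $0$ and $-\infty$. Continuity of the pieces of $\J$ on this class (the kernel $\log|x-y|^{-1}$ is continuous off the diagonal for measures with bounded density, and $\bs\V$ is continuous on compacts of $(0,\infty)$) yields $\J(\mu_n,\nu_n)\to\J(\mu,\nu)$ along a regularizing sequence.

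The second step is discretization. For such a nice $\mu$, set $x_i^N=F_\mu^{-1}(i/N)$ for $i=1,\dots,N$, so that $\tfrac1N\sum\delta_{x_i^N}\to\mu$. For $\nu$, use Lemma \ref{sigmaconv} together with the strict density gap $d\nu/dx\leq(1-\eta)\,d\sigma/dx$ on $\Supp\nu$: for $N$ large one can select $N/2$ distinct points $u_1^N,\dots,u_{N/2}^N$ from $\A_N\cap\Supp\nu$ such that $\tfrac1N\sum\delta_{u_i^N}\to\nu$ in the weak topology, with all $u_i^N$ bounded in $[-R,-r]$ for fixed $0<r<R<\infty$. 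This is where the constraint enters in a crucial way: without $\nu\leq(1-\eta)\sigma$ we would not have enough room in $\A_N$ to accommodate the discretization.

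The third step is the volume lower bound. Fix $\epsilon>0$ small and consider the box event
\[
E_N^\epsilon=\Bigl\{\,x_i\in[x_i^N,x_i^N+\epsilon/N]\ \forall i,\quad u_i=u_i^N\ \forall i\,\Bigr\}.
\]
For $\epsilon$ sufficiently small and $N$ sufficiently large, $E_N^\epsilon\subset\{(\mu^N,\nu^N)\in\B_\delta(\mu,\nu)\}$. Using the explicit density of Proposition \ref{loggas} and the lower bound on each factor over the box, together with $\sigma_N(\{u_i^N\})=1/N$, one gets
\[
Z_N\,\p_N(E_N^\epsilon)\ \geq\ \Bigl(\tfrac{\epsilon}{N}\Bigr)^{N}\Bigl(\tfrac{1}{N}\Bigr)^{N/2}\inf_{E_N^\epsilon}\frac{\Delta_N^2(\bs x)\Delta_{N/2}^2(\bs u)}{\Delta_{N,N/2}(\bs x,\bs u)}\prod_{i=1}^N w_{\alpha,N}(x_i)\prod_{i=1}^{N/2}|u_i|.
\]

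The final step is the Riemann-sum estimate. Taking $\log$ of the product evaluated at $(\bs x^N,\bs u^N)$ and dividing by $N^2$, the interaction and potential terms converge to the corresponding double integrals in \eqref{ratefunctionfake}, because all supports are separated from each other and from $0$ (so every integrand is continuous and bounded on the relevant compacts), and because the asymptotic behavior \eqref{asymptoBessel} gives $V_N(x)\to x-2\sqrt{ax}$ uniformly on compacts of $(0,\infty)$. The $O(\log N/N)$ contributions from the box volume $(\epsilon/N)^N(1/N)^{N/2}$ and from the perturbation of $\bs x^N$ within the box are negligible on the $N^2$ scale. Combining these estimates,
\[
\liminf_{N\to\infty}\frac{1}{N^2}\log\Bigl\{Z_N\p_N\bigl((\mu^N,\nu^N)\in\B_\delta(\mu,\nu)\bigr)\Bigr\}\ \geq\ -\J(\mu,\nu),
\]
and then letting $(\mu,\nu)$ run over the approximating family concludes the proof.

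The main obstacle is the discretization of $\nu$: one has to realize $\nu$ as a limit of configurations drawn from the fixed discrete set $\A_N$, and simultaneously keep the particles bounded away from $0$ (to control the singular factor $\log|z-\xi|-\log|\xi|$ and to use the continuity of $V_N\to\V$). The strict constraint $\nu<\sigma$, obtained at the approximation step, is what makes both possible via Lemma \ref{sigmaconv}.
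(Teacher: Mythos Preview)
Your proposal is correct and follows essentially the same three-part strategy as the paper: reduce to ``nice'' $(\mu,\nu)$ with compact supports bounded away from the origin and a strict constraint $\nu\leq(1-\eta)\sigma$, discretize using quantiles of $\mu$ together with a selection of $N/2$ distinct points of $\A_N$ approximating $\nu$ (the strict gap being what guarantees enough room in $\A_N$), then lower bound the probability via a box around this configuration and pass Riemann sums to the energy integrals using the uniform convergence $V_N\to x-2\sqrt{ax}$ from \eqref{asymptoBessel}. The only cosmetic differences are that the paper performs the approximation step last rather than first, and builds the $u_i^{(N)}$ explicitly as the largest element of $\A_N$ below each quantile $y_i^{(N)}$ of $\nu$, verifying an interlacing property via the McMahon expansion \eqref{McMahon2}; this gives a concrete handle on $\Delta_{N/2}^2(\bs u^{(N)})$ that your Riemann-sum sentence leaves implicit.
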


\begin{proof} 
Note that it is sufficient to show that for all  $(\mu,\nu)\in\mathcal{O}$
\eq
\label{L1}
\liminf_{N\rightarrow\infty}\frac{1}{N^{2}}\log \Big\{Z_N\p_N\Big(\left(\mu^{N},\nu^{N}\right)\in \mathcal{O}\Big)\Big\} \geq-\J(\mu,\nu).
\qe
We first prove in two steps that  \eqref{L1} holds if $\mu$ and $\nu$ satisfy the following :
\begin{assumption}\
\label{assump}
\begin{enumerate}
\item[{\rm (1)}]
 $\mu$ and $\nu$ have compact support.
 \item[{\rm (2)}]
 $\Supp(\mu)\subset\R_+\setminus\{0\}$ and $\Supp(\nu)\subset\R_-\setminus\{0\}$.
 \item[{\rm (3)}]
 With $\sigma$ as in \eqref{sigma}, there exists $0<\varepsilon<1$ such that $\nu\leq (1-\varepsilon)\sigma$.
 \item[{\rm (4)}]
 $T_*\mu$ and $T_*\nu$ have finite logarithmic energy.
\end{enumerate}
\end{assumption}
\noindent
We then extend in a last step \eqref{L1} to all $(\mu,\nu)\in\mathcal O$ by mean of an approximation procedure. This approach is similar to the strategy developed in \cite[Section 3.2]{BAG}, see also  \cite[Section 3.4]{GM}.

\paragraph{Step 1 (Discretization)} Given $(\mu,\nu)\in\mathcal O$ satisfying Assumption \ref{assump}, our first step consists to build discrete approximations of $(\mu,\nu)$. To this aim, we note that $\mu$ and $\nu$ have no atom as a consequence of Assumption \ref{assump} (d) and consider 
\begin {align}
\label{discrx1}
x_1^{(N)}  & = \min \Big\{x \in\R_+: \; \mu\big([0,x]\big)= \frac{1}{N}\Big\}, \\
\label{discrx2}
x^{(N)}_{i+1}  & = \min \Big\{x \geq x_i^{(N)} : \; \mu\big([x_{i}^{(N)},x]\big)= \frac{1}{N}\Big\}, \qquad i=1,\ldots,N-1,
\end{align}
and similarly 
\begin {align}
\label{discry1}
y_1^{(N)}  & = \min \Big\{y \in\R_-: \; \nu\big((-\infty,y]\big)= \frac{1}{N}\Big\}, \\
\label{discry2}
y_{i+1}^{(N)}  & = \min \Big\{ y\geq y_i^{(N)} : \; \nu\big([y_i^{(N)},y]\big)= \frac{1}{N}\Big\}, \qquad i=1,\ldots,N/2-1.
\end{align}
Since $\mu$ and $\nu$ moreover have compact supports,  the following weak convergence   follows easily 
\eq
\label{convdiscr}
\lim _{N\rightarrow\infty}\frac{1}{N}\sum_{i=1}^N\delta(x_i^{(N)})=\mu \qquad \mbox{and} \qquad \lim _{N\rightarrow\infty}\frac{1}{N}\sum_{i=1}^{N/2}\delta(y_i^{(N)}) =\nu.
\qe
Because the $u_i$'s are distributed on the discrete set $\A_N$ \eqref{AN}, we also set 
\eq
\label{defui}
u_i^{(N)} = \max\Big\{ u\in\A_N : \; u < y_i^{(N)}\Big\},\quad i=1,\ldots,N/2,
\qe
and moreover introduce 
\eq
\nu^{(N)}=\frac{1}{N}\sum_{i=1}^{N/2}\delta(u_i^{(N)}).
\qe
We now show that, for any $N$ large enough, the $u_i^{(N)}$'s lie in the convex hull $co(\Supp(\nu))$ and the following interlacing property holds
\eq
\label{interlace}
y_i^{(N)}< u_{i+1}^{(N)} < y_{i+1}^{(N)}, \qquad i=1,\ldots,N/2-1.
\qe
Indeed, with $\varepsilon$ as in Assumption \ref{assump} (3), \eqref{McMahon2} yields  $k_\varepsilon$ such that 
\[
\sup_{k\geq k_\varepsilon}\big(j_{\alpha,k+1}-j_{\alpha,k}\big)\leq\pi(1+\varepsilon)
\]
and, since $0\notin \Supp(\nu)$ by assumption, there exists $N_\varepsilon$ such that 
\[
\sup_{k< k_\varepsilon}\nu\big([a_{k+1,N},a_{k,N}]\big) = 0, \qquad N\geq N_\varepsilon.
\]
Thus, recalling the definition \eqref{akN} of the $a_{k,N}$'s, we obtain for any $N\geq N_\varepsilon$
\begin{align*}
\sup_{k\geq 0}\nu\big([a_{k+1,N},a_{k,N}]\big) & = \sup_{k\geq k_\varepsilon}\nu\big([a_{k+1,N},a_{k,N}]\big) \\
 & \leq  (1-\varepsilon)\sup_{k\geq k_\varepsilon}\sigma\big([a_{k+1,N},a_{k,N}]\big)\\
 & =  (1-\varepsilon)\frac{1}{\pi N}\sup_{k\geq k_\varepsilon}(j_{\alpha,k+1}-j_{\alpha,k}\big)\\
 & \leq (1-\varepsilon^2)\frac{1}{N}.
\end{align*}
The latter inequality implies that there exists an element of $\A_N$ in each $(y_i^{(N)}, y_{i+1}^{(N)})$ provided $N$ is large enough, so that \eqref{interlace} follows from the definition \eqref{defui} of the $u_i$'s, and moreover that all the $u_i$'s are in $co(\Supp(\nu))$.

Note that \eqref{interlace} yields $\nu^{(N)}\leq \sigma_N$, and thus $\nu^{(N)}\in\E(\R_-)$ for all $N$. Moreover, by combining   \eqref{interlace} with \eqref{convdiscr}, we obtain the weak convergence of $(\nu^{(N)})_N$  towards $\nu$.
As the result of the discretization step, we have shown the existence of $\delta_0>0$ and $N_0$ such that for all $0<\delta\leq\delta_0$ and $N\geq N_0$
\eq
\label{minoration}
\left\{ \Big(\,\frac{1}{N}\sum_{i=1}^{N}\delta(x_{i}), \nu^{(N)}\Big)\,:\; \bs x\in\R_+^N,\;\max_{i=1}^N |x_i-x_i^{(N)}| \leq \delta \right\} \subset \mathcal{O}.
\qe
 
 \paragraph{Step 2. (Lower bound)}
We now prove \eqref{L1} when $(\mu,\nu)$ satisfies Assumption \ref{assump}. As a consequence of  \eqref{minoration} we obtain for any $0<\delta\leq\delta_0$
\begin{multline}
\label{L5}
Z_N \p_N\Big(\left(\mu^{N},\nu^{N}\right)\in \mathcal{O}\Big)\\
\geq \int_{\big\{\bs x \in\R_+^N\, :\, \max_i|x_i-x_i^{(N)}|\leq \delta\big\}}\frac{\Delta_{N}^{2}\left(\bs x\right)\Delta_{N/2}^{2}\big(\bs u^{(N)}\big)}{\Delta_{N,N/2}\big(\bs x,\bs u^{(N)}\big)}\prod_{i=1}^{N/2}|u_i^{(N)}| \prod_{i=1}^N e^{-NV_N(x_i)}dx_i.
\end{multline}
For a Borel measure $\lambda$ on $\R$ with compact support, introduce its logarithmic potential  
\[
U^{\lambda}(x)=\int\log\frac{1}{|x-u|}d\lambda(u)
\] 
which is continuous on $\R\setminus\Supp(\lambda)$ \cite[Chapter 0]{ST} and note that
\eq
\label{L5'}
\Delta_{N,N/{2}}\big(\bs x,\bs u^{(N)}\big) = \prod_{i=1}^N\exp\Big\{-N\, U^{\nu^{(N)}}(x_i)\Big\}.
\qe
We also set for $x\in \R_+$
\begin{align}
W_N(x) & = V_N(x) - U^{\nu^{(N)}}(x), \\
\label{L5''}
W(x) \;& = x -2\sqrt{ax} - U^{\nu}(x)
\end{align}
and obtain from \eqref{L5}--\eqref{L5''} 
\begin{align}
\label{L6}
& Z_N \p_N\Big(\left(\mu^{N},\nu^{N}\right)\in \mathcal{O}\Big) \nonumber \\
\geq  & \quad \exp\Big\{-N^2\max_{x\in co(\Supp(\mu))}|W_N(x)-W(x)| \Big\} \Delta_{N/2}^{2}\big(\bs u^{(N)}\big)|a_{0,N}|^{N/2} \\
 & \qquad\qquad\times\int_{\big\{\bs x \in\R_+^N\, :\, \max_i|x_i-x_i^{(N)}|\leq \delta\big\}}\Delta_{N}^{2}\left(\bs x\right) \prod_{i=1}^N e^{-NW(x_i)}dx_i.\nonumber
\end{align}
By using the change  of variables $x_i\mapsto x_i+x_i^{(N)}$ for $i=1,\ldots,N$, and the fact that $|x_i^{(N)}-x^{(N)}_j+x_i-x_j|\geq \max\big\{|x^{(N)}_i-x_j^{(N)}|\, ,\, |x_i-x_j|\big\}$ as soon as $x_i\geq x_j$ and $x_i^{(N)}\geq x_j^{(N)}$, we find
\begin{align}
\label{L7}
  & \int_{\big\{\bs x \in\R_+^N\, :\, \max_i|x_i-x_i^{(N)}|\leq \delta\big\}}\Delta_{N}^{2}\left(\bs x\right)\prod_{i=1}^N e^{-NW(x_i)}dx_i
  \nonumber\\
\geq & \quad \int_{ [0,\delta]^N}\Delta_{N}^{2}\big(\bs x +\bs x^{(N)}\big) \prod_{i=1}^N e^{-NW(x_i+x_i^{(N)})}dx_i\nonumber\\
\geq  &  \quad\prod_{i+1<j}\big(x_j^{(N)}-x_i^{(N)}\big)^2\prod_{i=1}^{N-1}\big(x_{i+1}^{(N)}-x_{i}^{(N)}\big)\prod_{i=1}^N e^{-NW(x_i^{(N)})}\\
 & \qquad \times \int_{ \big\{ \bs x \in [0,\delta]^N \, : \; x_1< \cdots \,<x_{N}\big\}}\prod_{i=1}^{N-1}(x_{i+1}-x_i)\prod_{i=1}^N e^{-N|W(x_i+x_i^{(N)})-W(x_i^{(N)})|}dx_i.\nonumber
\end{align}
Since  the $x_i^{(N)}$'s lie in the compact set $co(\Supp(\mu))$ and  $W$ is  continuous there, we  obtain 
\eq
\label{L8}
\lim_{\delta\rightarrow0}\limsup_{N\rightarrow\infty}\max_{1\leq i \leq N}\max_{x\in[0,\delta]}|W(x+x_i^{(N)})-W(x_i^{(N)})|=0
\qe
and also, using moreover \eqref{convdiscr},  
\eq
\label{L9}
\lim_{N\rightarrow\infty}\frac{1}{N}\sum_{i=1}^N W\big(x_i^{(N)}\big) = \int W(x)d\mu(x).
\qe
Using the change of variables $u_1=x_1$ and $u_{i+1}=x_{i+1}-x_i$ for $i=1,\ldots,N-1$, it follows 
\begin{align}
\label{L10}
  & \int_{ \big\{ \bs x \in [0,\delta]^N \, : \; x_1< \cdots \,<x_{N}\big\}}dx_1\prod_{i=1}^{N-1}(x_{i+1}-x_i)dx_{i+1} \nonumber \\
\geq &\;  \int_{[0,\,\delta/N]^N}du_1\prod_{i=2}^{N}u_{i}du_i \; = \; \frac{1}{2^{N-1}}\left(\frac{\delta}{N}\right)^{2N-1}.
\end{align}
We thus obtain from \eqref{L7}--\eqref{L10}
\begin{align}
\label{control1}
&\liminf_{\delta\rightarrow 0} \liminf_{N\rightarrow\infty}\frac{1}{N^2}\log\int_{\big\{\bs x \in\R_+^N\, :\, \max_i|x_i-x_i^{(N)}|\leq \delta\big\}}\Delta_{N}^{2}\left(\bs x\right)\prod_{i=1}^N e^{-NW(x_i)}dx_i \nonumber\\
\geq & \;\liminf_{N\rightarrow\infty} \frac{1}{N^2}\left( \sum_{i+1<j}\log \big(x_j^{(N)}-x_i^{(N)}\big)^2+ \sum_{i=1}^{N-1}\log \big(x_{i+1}^{(N)}-x_i^{(N)}\big)\right)\\
&  \qquad \qquad  \qquad \qquad \qquad\qquad\qquad \qquad \qquad - \int W(x)d\mu(x) \nonumber.
\end{align}
Next, we have
\eq
\label{control2}
\lim_{N\rightarrow\infty}\;\max_{x\in co(\Supp(\mu))}|W_N(x)-W(x)| = 0.
\qe
Indeed, the  asymptotic behavior \eqref{asymptoBessel} yields the uniform convergence of $V_N(x)$ towards $x-2\sqrt{ax}$  as $N\rightarrow\infty$ on every compact subset of $\R_+\setminus\{0\}$, and in particular on $co(\Supp(\mu))$. It is thus enough to show the uniform convergence of $U^{\nu^{(N)}}$ to $U^\nu$ on $co(\Supp(\mu))$ to obtain \eqref{control2}. For any $x\in \Supp(\mu)$, the map $y\mapsto \log|x-y|$ is continuous and bounded on $co(\Supp(\nu))$, so that the pointwise convergence of $U^{\nu^{(N)}}$ to $U^{\nu}$ on $co(\Supp(\mu))$ follows from from the weak convergence of $\nu^{(N)}$ to $\nu$. Since for all $N$ the map $U^{\nu^{(N)}}$ is continuous and decreasing on the compact $co(\Supp(\mu))$, and that $U^{\nu}$ is moreover continuous there, the pointwise convergence extends to the uniform convergence by  Dini's theorem.

We thus obtain from \eqref{control1}--\eqref{control2} by taking the limit $N\rightarrow\infty$ and then $\delta\rightarrow 0$ in \eqref{L6}   that 
\begin{align}
\label{L16}
 & \liminf_{N\rightarrow\infty}\frac{1}{N^{2}}\log \Big\{Z_N\p_N\Big(\left(\mu^{N},\nu^{N}\right)\in \mathcal{O}\Big)\Big\}\nonumber\\
\geq & \;\liminf_{N\rightarrow\infty} \frac{1}{N^2}\left( \sum_{i+1<j}\log \big(x_j^{(N)}-x_i^{(N)}\big)^2+ \sum_{i=1}^{N-1}\log \big(x_{i+1}^{(N)}-x_i^{(N)}\big)\right)\\
& + \quad \liminf_{N\rightarrow\infty} \frac{1}{N^2} \sum_{i<j}\log\big(u_j^{(N)}-u_i^{(N)}\big)^2 - \int W(x)d\mu(x) \nonumber.
\end{align}
Now, note that because $x\mapsto \log(x)$ increases on $\R_+$ the definition \eqref{discrx1}--\eqref{discrx2} of the $x_i^{(N)}$'s yields
\begin{align}
\label{L14}
 & \frac{1}{N^2}\sum_{i+1 < j} \log\big(x^{(N)}_{j}-x_i^{(N)}\big)^2 +\frac{1}{N^2}\sum_{i=1}^{N-1} \log\big(x^{(N)}_{i+1}-x_i^{(N)}\big)
 \nonumber\\
= & \quad 2\sum_{1\leq i \leq j \leq N-1} \log\big(x^{(N)}_{j+1}-x_i^{(N)}\big)\iint_{[x_i^{(N)},x_{i+1}^{(N)}]\times[x_j^{(N)},x_{j+1}^{(N)}]}\boldsymbol 1_{x<y}\,d\mu(x)d\mu(y)\nonumber \\
\geq & \quad 2 \iint_{x_1^{(N)}\leq \,x\,<\, y\, \leq \,x_N^{(N)}}\log(y-x)d\mu(x)d\mu(y)
\end{align}
and then that 
\begin{align}
\label{L15}
&2 \lim_{N\rightarrow\infty}\iint_{x_1^{(N)}\leq \,x\,<\, y\, \leq \,x_N^{(N)}}\log(y-x)d\mu(x)d\mu(y) \nonumber\\
&=\qquad \iint\log|x-y|d\mu(x)d\mu(y).
\end{align}
The interlacing property \eqref{interlace} yields
\[
u_j^{(N)}-u_i^{(N)} \geq \;y_{j-1}^{(N)}-y_i^{(N)} \quad\qquad\mbox{for  } i+1<j,
\]
and thus
\eq
\label{L11}
\sum_{i<j}\log\big(u_j^{(N)}-u_i^{(N)}\big)^2 \geq \sum_{i=1}^{N/2-1}\log\big(u_{i+1}^{(N)}-u_i^{(N)}\big)^2+\sum_{i+1<j}\log\big(y_{j-1}^{(N)}-y_i^{(N)}\big)^2.
\qe
Since 
\begin{align*}
\min_{1\leq i \leq N/2}\big(u_{i+1}^{(N)}-u_i^{(N)}\big) & \geq \;\inf_{k\geq 0}\big(a_{k,N} - a_{k+1,N}\big) \\
& \geq \;\frac{j_{\alpha,0}}{2aN^2}\inf_{k\geq 0}\big(j_{\alpha,k+1}-j_{\alpha,k}\big),
\end{align*}
we obtain from \eqref{McMahon2} and \eqref{L11}  
\eq
\label{L12}
\liminf_{N\rightarrow\infty}\frac{1}{N^2}\sum_{i<j}\log\big(u_j^{(N)}-u_i^{(N)}\big)^2 \geq \liminf_{N\rightarrow\infty}\frac{1}{N^2}\sum_{i+1<j}\log\big(y_{j-1}^{(N)}-y_i^{(N)}\big)^2.
\qe 
Moreover,  because for $1 \leq i \leq N/2-1$
\begin{align*}
y_{i+1}^{(N)}-y_i^{(N)} & \geq \;2|\max(\Supp(\nu))|^{1/2}\big(|y_i^{(N)}|^{1/2} - |y_{i+1}^{(N)}|^{1/2}\, \big)\\
& = \;\frac{\pi}{\sqrt{a}}|\max(\Supp(\nu))|^{1/2}\,\sigma\big([y_i^{(N)},y_{i+1}^{(N)}]\big)\\
& \geq \; \frac{\pi}{\sqrt{a}}|\max(\Supp(\nu))|^{1/2}\,\nu\big([y_i^{(N)},y_{i+1}^{(N)}]\big)\\
& = \;\frac{\pi}{\sqrt{a}N}|\max(\Supp(\nu))|^{1/2},
\end{align*}
we obtain from \eqref{L12}
\eq
\label{L12bis}
\liminf_{N\rightarrow\infty}\frac{1}{N^2}\sum_{i<j}\log\big(u_j^{(N)}-u_i^{(N)}\big)^2 \geq \liminf_{N\rightarrow\infty}\frac{1}{N^2}\sum_{i+2<j}\log\big(y_{j-1}^{(N)}-y_i^{(N)}\big)^2.
\qe 
Next, similarly than in \eqref{L14}--\eqref{L15}, we obtain from the definition \eqref{discry1}--\eqref{discry2} of the $y_i^{(N)}$'s  that
\begin{align}
\label{L13}
&\liminf_{N\rightarrow\infty}\frac{1}{N^2}\sum_{i+2<j}\log\big(y_{j-1}^{(N)}-y_i^{(N)}\big)^2 \nonumber\\
= & \quad 2\liminf_{N\rightarrow\infty}\sum_{i+2<j}\log\big(y_{j-1}^{(N)}-y_i^{(N)}\big)\iint _{[y^{N}_i,y_{i+1}^{(N)}]\times[y^{(N)}_{j-2},y_{j-1}^{(N)}]}\boldsymbol 1_{u<v}\,d\nu(u)d\nu(v) \nonumber\\
\geq & \quad 2\liminf_{N\rightarrow\infty} \iint _{y_1^{(N)}\leq u < v \leq y_{N/2-1}^{(N)}}\log(v-u)d\nu(u)d\nu(v)\nonumber\\
= & \quad\iint\log|x-y|d\nu(x)d\nu(y).
\end{align}
From \eqref{L16}--\eqref{L15} and \eqref{L12bis}--\eqref{L13} it follows

\begin{align}
\label{L17}
 & \liminf_{N\rightarrow\infty}\frac{1}{N^{2}}\log \Big\{Z_N\p_N\Big(\left(\mu^{N},\nu^{N}\right)\in \mathcal{O}\Big)\Big\}\nonumber\\
\geq & \qquad- \Bigg\{\iint\log\frac{1}{|x-y|}d\mu(x)d\mu(y)+\int W(x)d\mu(x)\\
& \qquad \qquad\qquad\qquad \qquad  \qquad +\iint\log\frac{1}{|x-y|}d\nu(x)d\nu(y) \Bigg\} \nonumber.
\end{align}
Since both $V$ and $U^\nu$ are  bounded and continuous functions on the compact $\Supp(\mu)$, by \eqref{L5''}

\begin{align*}
\int W(x)d\mu(x) & = \int \Big(x-2\sqrt{ax}\,\Big)d\mu(x) - \int U^{\nu}(x)d\mu(x)\nonumber\\
& = \int \Big(x-2\sqrt{ax}\,\Big)d\mu(x) - \iint\log\frac{1}{|x-y|}d\mu(x)d\nu(y),
\end{align*}
and thus

\begin{align}
\label{L18}
 & \liminf_{N\rightarrow\infty}\frac{1}{N^{2}}\log \Big\{Z_N\p_N\Big(\left(\mu^{N},\nu^{N}\right)\in \mathcal{O}\Big)\Big\}\nonumber\\
\geq & \qquad- \Bigg\{\iint\log\frac{1}{|x-y|}d\mu(x)d\mu(y) - \iint\log\frac{1}{|x-y|}d\mu(x)d\nu(y)\\
& \qquad \qquad \qquad  \qquad +\iint\log\frac{1}{|x-y|}d\nu(x)d\nu(y)+\int \Big(x-2\sqrt{ax}\,\Big)d\mu(x) \Bigg\} \nonumber.
\end{align}
Since by assumption the measures $\mu$, $\nu$ have  compact supports and $T_*\mu$, $T_*\nu$ have finite logarithmic energies, then $\mu$, $\nu$  also have finite logarithmic energies and clearly
\[
\int\log(1+x^2)d\mu(x)<+\infty,\qquad \int\log(1+x^2)d\nu(x)<+\infty.
\]
Thus, one can use the relation \eqref{relationCsphere} and obtain that the right-hand side of \eqref{L18} equals $\J(\mu,\nu)$, see \eqref{ratefunction}, which proves \eqref{L1}.

\paragraph{Step 3. (Approximation)}
First note that \eqref{L1} trivially  holds  as soon as  $\J(\mu,\nu)=+\infty$. It is thus enough to show \eqref{L1}  when both $T_*\mu$ and $T_*\nu$ have finite logarithmic energy, and one can moreover assume that $\nu\leq \sigma$. For such $(\mu,\nu)$, we now construct a sequence $(\mu_k,\nu_k)_k$ of $\M_1(\R_+)\times\E(\R_-)$ where each $(\mu_k,\nu_k)$ satisfies Assumption \ref{assump}, such that  we have the weak convergences
\[
\lim_{k\rightarrow\infty}\mu_k=\mu,\qquad \lim_{k\rightarrow\infty}\nu_k=\nu,
\] 
and which moreover satisfies
\eq
\label{goodapprox}
\lim_{k\rightarrow\infty}\J(\mu_k,\nu_k)=\J(\mu,\nu).
\qe
This, combined with the two first steps of the proof, shows that \eqref{L1} actually holds for all $(\mu,\nu)\in\mathcal O$, and thus complete the proof of Proposition \ref{LDPLBp}.

For any $k$ large enough, let $\mu_k\in\M_1(\R_+)$ be the normalized restriction of $\mu$ to $[k^{-1},k]$, so that $\Supp(\mu_k)\subset \R_+\setminus\{0\}$ is compact. The monotone convergence theorem yields  that $(\mu_k)_k$ converges to $\mu$ as $k\rightarrow\infty$. To approximate $\nu$, we have to stay in the class of constrained measures $\M_{1/2}^{\sigma}(\R_-)$, and thus to proceed a bit more carefully. To this aim, choose two sequences  $(a_k)_k$ and $(b_k)_k$  satisfying  $a_k<b_k<0$ and  
\begin{itemize}
\item[1)]
$a_k$ decreases to $\inf(\Supp(\nu)) $ as $k\rightarrow\infty$,
\item[2)]
 $b_k$ increases to $\max(\Supp(\nu))$ as $k\rightarrow\infty$,
\item[3)] for any $k$ large enough,
\eq
\label{condakbk}
\nu\big([a_k,b_k]\big)\geq(1-k^{-1}).
\qe
\end{itemize}
Since $\nu\leq \sigma$, the Radon-Nikodym theorem yields $f \in L^1(\R_-)$ such that 
\eq
\label{constrdensity}
d\nu(x)=f(x)dx,\qquad f(x)\leq \frac{\sqrt{a}}{\pi}|x|^{-1/2}, \qquad x\in\R_-.
\qe
We then set the probability measure
\eq
\label{nukapprox}
d\nu_k(x)=\left(\frac{(1-k^{-1})^4}{\nu([a_k,b_k])}\right)f\big((1-k^{-1})^4x\big) \bs{1}_{[a_k,b_k]}\big((1-k^{-1})^4x\big)dx,
\qe
whose support $\Supp(\nu_k)\subset \R_-\setminus \{0\}$ is compact. $(\nu_k)_k$ is easily seen to converge to $\nu$ as $k\rightarrow\infty$ using monotone convergence. Moreover, it follows from \eqref{condakbk}--\eqref{constrdensity} and the definition \eqref{nukapprox} that 
\[
\nu_k\leq (1-k^{-1})\sigma.
\] 
The fact that  $T_*\mu_k$ and  $T_*\nu_k$ have finite logarithmic energy for $k$ large enough, and thus that $(\mu_k,\nu_k)$ satisfies Assumption \ref{assump}, will be a consequence of \eqref{G1}--\eqref{G2}, see below.

We now prove that the sequence $(\mu_k,\nu_k)_k$ satisfies  \eqref{goodapprox}. Recall that   $\J(\mu,\nu)=J(T_*\mu,T_*\nu)$ where $J$ is as in \eqref{representation}, namely
\begin{align}
\label{G0}
\J(\mu_k,\nu_k) = & \iint\log\frac{1}{|z-w|}dT_*\mu_k(z)dT_*\mu_k(w)\\
& \quad  +\iint \Big(2\bs\V(z)+\log|z-\xi|-\log|\xi|\Big)dT_*\mu_k(z)dT_*\nu_k(\xi) \nonumber\\
 & \qquad\qquad +\iint\log\frac{1}{|\xi-\zeta|}dT_*\nu_k(\xi)dT_*\nu_k(\zeta)+\int\log|\xi|dT_*\nu_k(\xi)\nonumber.
\end{align}
First, since $\s$ is compact, we obtain by monotone convergence
\begin{align}
\label{G1}
&\lim_{k\rightarrow\infty}\iint\log\frac{1}{|z-w|}dT_*\mu_k(z)dT_*\mu_k(w)\nonumber\\
= &\; \lim_{k\rightarrow\infty}\int_{k^{-1}}^k\int_{k^{-1}}^k\log\frac{1}{|T(x)-T(y)|}d\mu(x)d\mu(y)\nonumber\\
= &\; \iint\log\frac{1}{|T(x)-T(y)|}d\mu(x)d\mu(y)\nonumber\\
=& \; \iint\log\frac{1}{|z-w|}dT_*\mu(z)dT_*\mu(w).
\end{align}
Similarly, but using moreover the metric relation \eqref{metric}, the change of variables $u\mapsto u/(1-k^{-1})^4$ and the inequality $|u-v|\leq\sqrt{1+u^2}\sqrt{1+v^2}$,
\begin{align}
\label{G2}
&\lim_{k\rightarrow\infty}\iint\log\frac{1}{|\xi-\zeta|}dT_*\nu_k(\xi)dT_*\nu_k(\zeta)\nonumber \\
= &\; \lim_{k\rightarrow\infty}\iint\log\frac{\sqrt{1+u^2}\sqrt{1+v^2}}{|u-v|}d\nu_k(u)d\nu_k(v)\nonumber \\
= &\; \lim_{k\rightarrow\infty}\int_{a_k}^{b_k}\int_{a_k}^{b_k}\log\frac{\sqrt{(1-k^{-1})^8+u^2}\sqrt{(1-k^{-1})^8+v^2}}{|u-v|}d\nu(u)d\nu(v)\nonumber \\
= &\; \iint\log\frac{\sqrt{1+u^2}\sqrt{1+v^2}}{|u-v|}d\nu(u)d\nu(v) \nonumber \\
=& \; \iint\log\frac{1}{|\xi-\zeta|}dT_*\nu(\xi)dT_*\nu(\zeta).
\end{align}
The same arguments moreover combined  with the inequality $|x-u|\geq |u|$ for $(x,u)\in\R_+\times\R_-$  yield
\begin{align}
\label{G3}
& \lim_{k\rightarrow\infty}\iint \Big(2\bs\V(z)+\log|z-\xi|-\log|\xi|\Big)dT_*\mu_k(z)dT_*\nu_k(\xi)  \nonumber \\
= & \;\lim_{k\rightarrow\infty} \iint\Big\{2\big(x-2\sqrt{ax}-\log(1+x^2)\big)+\log|x-u|-\log|u|\Big\}d\mu_k(x)d\nu_k(u) \nonumber\\
= & \;\lim_{k\rightarrow\infty}  \int_{k^{-1}}^k\int_{a_k}^{b_k}\Big\{2\big(x-2\sqrt{ax}-\log(1+x^2)\big)\nonumber\\
& \qquad \qquad \qquad\qquad \qquad \qquad+\log|(1-k^{-1})^4x-u|-\log|u|\Big\}d\mu(x)d\nu(u) \nonumber\\
= &\; \iint\Big\{2\big(x-2\sqrt{ax}-\log(1+x^2)\big)+\log|x-u|-\log|u|\Big\}d\mu(x)d\nu(u) \nonumber\\
= &\; \iint \Big(2\bs\V(z)+\log|z-\xi|-\log|\xi|\Big)dT_*\mu(z)dT_*\nu(\xi)
\end{align}
After that, the continuity of $T_*$ on $\E(\s_-)$ and Lemma \ref{unifint} provide  
\eq
\label{G4}
\lim_{k\rightarrow\infty}\int \log|\xi|dT_*\nu_k(\xi) = \int \log|\xi|dT_*\nu(\xi).
\qe
Finally, \eqref{goodapprox} follows from  \eqref{G0}--\eqref{G4}, which completes  the  proof of Proposition \ref{LDPLBp}.
\end{proof}

\subsection{Proof of Theorem \ref{generalLDP} (c), (d)}
\label{fullLDP}

We are now in position the prove Theorem \ref{generalLDP} (c), (d). The following proof follows closely \cite[Section 2.3]{H}.
\begin{proof}[Proof of Theorem \ref{generalLDP} {\rm (c)}, {\rm (d)}]

It is enough to show that for any closed set $\F\subset\M_1(\R_+)\times\E(\R_-)$,
\eq
\label{UB}
\limsup_{N\rightarrow\infty}\frac{1}{N^2}\log \Big\{Z_N \p_N\Big((\mu^N,\nu^N)\in\F\Big)\Big\}\leq - \inf_{(\mu,\nu)\in\F}\J(\mu,\nu),
\qe
and for any open set $\mathcal{O}\subset \M_1(\R_+)\times\E(\R_-)$,
\eq
\label{LB}
\liminf_{N\rightarrow\infty}\frac{1}{N^2}\log\Big\{ Z_N \p_N\Big((\mu^N,\nu^N)\in\mathcal{O}\Big)\Big\}\geq - \inf_{(\mu,\nu)\in\mathcal{O}}\J(\mu,\nu).
\qe
Indeed, by taking $\F=\mathcal{O}=\M_1(\R_+)\times\E(\R_-)$ in \eqref{UB} and \eqref{LB}, one  obtains
\[
\lim_{N\rightarrow\infty}\frac{1}{N^2}\log Z_N= -\inf_{(\mu,\nu)\in\M_1(\R_+)\times\E(\R_-)}\J(\mu,\nu)=-\J(\mu^*,\nu^*),
\]
the latter quantity being finite. 

Since \eqref{LB} has been established in Proposition \ref{LDPLBp}, we just have to show \eqref{UB}. We note for convenience $T_*\B = \big\{(T_*\mu,T_*\nu) : \;(\mu,\nu)\in\B\big\}$  when $\B\subset \M_1(\R_+)\times\E(\R_-)$. For any closed set $\F\subset\M_1(\R_+)\times\E(\R_-)$ we have
\eq
\label{C1}
 \p_N\Big((\mu^N,\nu^N)\in\F\Big)  \leq  \p_N\Big((T_*\mu^N,T_*\nu^N)\in{\rm clo}(T_*\F)\Big),
\qe
where ${\rm clo}(T_*\F)$ stands for the closure of $T_*\F$ in $\M_1(\s_+)\times\E(\s_-)$. Then, since $\M_1(\s_+)\times\E(\s_-)$ is compact so is ${\rm clo}(T_*\F\big)$ and, by extracting a finite covering of ${\rm clo}(T_*\F)$ from an  appropriate  covering by balls, a classical argument from LDPs theory (see for example the proof of \cite[Theorem 4.1.11]{DZ})  yields from Proposition \ref{LDPUB} that
\eq
\label{C2}
\limsup_{N\rightarrow\infty}\frac{1}{N^2}\log\Big\{ Z_N \p_N\Big((T_*\mu^N,T_*\nu^N)\in{\rm clo}(T_*\F)\Big)\Big\}
\leq -\inf_{(\mu,\nu)\in{\,\rm clo}(T_*\F)} J(\mu,\nu).
\qe
If $(\mu,\nu)\in{\rm clo}(T_*\F)$ is such that $\mu(\{(0,1)\})=0$, then $(\mu,\nu)\in T_*\F$. Indeed, let $\big((T_*\eta_N,T_*\lambda_N)\big)_N$ be a sequence in $T_*\F$ with limit $(\mu,\nu)$ satisfying $\mu(\{(0,1)\})=0$. Since $T_*$ is an homeomorphism from $\M_1(\R_+)$ (resp. $\E(\R_-)$) to $\big\{\mu\in\M_1(\s_+) : \;\mu(\{(0,1)\})=0\big\}$ (resp. $\E(\s_-)$), this provides $(\eta,\lambda)\in\M_1(\R_+)\times\E(\R_-)$ such that $(\mu,\nu)=(T_*\eta,T_*\lambda)$ and moreover the convergence of  $\big((\eta_N,\lambda_N)\big)_N$  towards  $(\eta,\lambda)$. Since $\F$ is closed necessarily  $(\mu,\nu)\in T_*\F$. 

As a consequence, because $J(\mu,\nu)=+\infty$ as soon as $\mu(\{(0,1)\})>0$, we obtain from the relation  \eqref{relation}
\eq
\label{C3}
\inf_{\mu\in{\,\rm clo}(T_*\F)} J(\mu,\nu)   = \inf_{\mu\in T_*\F}J(\mu,\nu) = \inf_{\mu\in \F}\J(\mu,\nu).
\qe
Finally,  \eqref{UB} follows from \eqref{C1}--\eqref{C3}. The proof of Theorem \ref{generalLDP} is therefore complete.

\end{proof}

\subsection*{Acknowledgments}
The first author would like to thank Myl\`ene Maida for useful detailed explanations concerning her work \cite{GM}. He also would like to thank Michel Ledoux for his advice and generous encouragement. 

The authors are supported by FWO-Flanders projects G.0427.09 and by the Belgian Interuniversity
Attraction Poles P6/02 and P7/18.

The second author is also supported by FWO-Flanders projects G.0641.11 and G.0934.13, by K.U. Leuven research grant OT/08/33 and OT/12/73, and by research grant MTM2011-28952-C02-01 from the Ministry of Science and Innovation of Spain and the European Regional Development Fund (ERDF).

\end{document}